\definecolor{c1}{rgb}{0,0,1}
\definecolor{c2}{rgb}{0,0.3,0.9}
\definecolor{c3}{rgb}{0.3,0.9}
\newcommand{\bbR}{\mathbb{R}}
\newcommand{\R}{\bbR}
\def\XXint#1#2#3{{\setbox0=\hbox{$#1{#2#3}{\int}$ }
\vcenter{\hbox{$#2#3$ }}\kern-.6\wd0}}
\theoremstyle{plain}
\newtheorem{theorem}{Theorem}[section]
\theoremstyle{definition}
\theoremstyle{lemma}
\newtheorem{lemma}[theorem]{Lemma}
\theoremstyle{remark}
\theoremstyle{proposition}
\newtheorem{proposition}[theorem]{Proposition}
\theoremstyle{corollary}
\newtheorem{corollary}[theorem]{Corollary}
\begin{document}
\pagestyle{empty}
\title{Strong $L^2 H^2$ convergence of the JKO scheme\\ for the Fokker-Planck equation}
\author{Filippo Santambrogio\thanks{Institut Camille Jordan, Universit\'e Claude Bernard Lyon 1,   69622 Villeurbanne cedex, France, {\tt santambrogio@math.univ-lyon1.fr}}, Gayrat Toshpulatov\thanks{Institut f\"ur Analysis \& Scientific Computing,Technische Universit\"at Wien, Wiedner Hauptstr. 8, 1040 Wien, Austria, {\tt gayrat.toshpulatov@tuwien.ac.at}}}

\maketitle

\pagestyle{plain}
\begin{abstract}
Following a celebrated paper by Jordan, Kinderleherer and Otto it is possible to discretize in time the Fokker-Planck equation $\partial_t\varrho=\Delta\varrho+\nabla\cdot(\rho\nabla V)$ by solving a sequence of iterated variational problems in the Wasserstein space, and the sequence of piecewise constant curves obtained from the scheme is known to converge to the solution of the continuous PDE. This convergence is uniform in time valued in the Wasserstein space and also strong in $L^1$ in space-time. We prove in this paper, under some assumptions on the domain (a bounded and smooth convex domain) and on the initial datum (which is supposed to be bounded away from zero and infinity and belong to $W^{1,p}$ for an exponent $p$ larger than the dimension), that the convergence is actually strong in $L^2_tH^2_x$, hence strongly improving the previously known results in terms of the order of derivation in space. The technique is based on some inequalities, obtained with optimal transport techniques, that can be proven on the discrete sequence of approximate solutions, and that mimic the corresponding continuous computations.
\end{abstract}
\tableofcontents

\section{Introduction}

More than 20 years ago Jordan, Kinderleherer and Otto wrote their seminal paper \cite{JKO}, where they identified a variational structure in the Fokker-Plank equation 
$$\partial_t\varrho=\Delta\varrho+\nabla\cdot(\rho\nabla V)$$
as a gradient flow of the functional $J(\varrho)=\int \varrho(V+\log\varrho)$ in the Wasserstein space $W_2$. This is remarkable since the same equation has no gradient-flow structure for Hilbertian distances such as the $L^2$ or $H^{-1}$ norms, differently from the heat equation (obtained for $V=0$). The gradient flow interpretation also gives a natural discretization in time, where a time step $\tau>0$ is fixed and a sequence $(\varrho^\tau_k)_k$ is iteratively defined using
$$\varrho_{k+1}^{\tau} \in \text{argmin}_{\varrho}\left\{ J(\varrho)+\dfrac{{W}^2_2(\varrho,\varrho_k^{\tau})}{2\tau}\right\}.
$$
This sequence is then used to define a curve $t\mapsto \varrho^{\tau}(t)$ in the space of probability measures via $\varrho^{\tau}(0)=\varrho_0$ and
$$\varrho^{\tau}(t)=\varrho_{k+1}^{\tau}  \text{   for  } t\in (k\tau,(k+1)\tau].$$

In  \cite{JKO} the convergence of $\varrho^\tau$ to the solution of the Fokker-Planck equation was proven when the domain on which the equation is set is the whole space (of course under suitable decay assumptions on the initial datum, namely that it has finite second moment $\int |x|^2d\varrho_0(x)<+\infty),$ and the convergence is weak in $L^1$ for every $t$ and strong in $L^1([0,T]\times\R^d)$. The analysis of the convergence can be adapted to the case of a bounded domain $\Omega$ (and in this case there is no moment condition) and the results are essentially the same. This is, for instance, the object of Chapter 8 in \cite{OTAM}, where the Fokker-Plank equation is chosen as an example to present the JKO scheme and the gradient-flow approach to some diffusion equations. As it is a linear equation, it is also the simplest case where this analysis can be performed.

The goal of the present paper is to improve the nature of the above convergence, under some possible assumptions on the initial datum. We do not mean obtaining a better rate of convergence in terms of $\tau$ (note that \cite{AGS} proves a convergence of order $O(\tau)$ in the Wasserstein distance $W_2$ whenever $V$ is semi-convex), but obtaining strong convergences in the best possible spaces.

The paper will undergo these proofs of convergence by analyzing different steps. The first one consists in proving a very classical $L^2_tH^1_x$ bound on the discrete solutions $\varrho^\tau$, which is obtained by a discrete analogous of a very well-known computation : the time derivative of $\int \frac12\varrho^2$ equals $-\int |\nabla\varrho|^2+\varrho \nabla\varrho\cdot \nabla V$ whenever $\varrho$ solves the Fokker-Planck equation. A similar computation can be done for the sequence obtained via the JKO scheme, but strongly relies on the geodesic convexity of the functional $\varrho\mapsto\int \frac12\varrho^2$. This estimate provides strong $L^2$ compactness in space, and allows to obtain convergence in $L^2_tL^2_x$ via the Aubin-Lions lemma when coupled with bounds in time, which are obtained via a suitable interpolation which allows to apply one of the most classical versions of Aubin and Lions' result.  The strong $L^2_tL^2_x$ convergence is not surprising, and it is just a small refinement of the original $L^1$ convergence already proven by Jordan-Kinderlehrer-Otto, but is a necessary step to go on. The next step consists the strong convergence in $L^2_tH^1_x$. This is obtained by refining the same computations. Once we have a bound on $\int_0^T\int_\Omega |\nabla\varrho^\tau|^2$, this provides weak convergence in $L^2_tH^1_x$ and the limit can only be the solution $\varrho$ to the limit Fokker-Planck equation. We do have strong convergence if we are able to prove $\limsup_{\tau\to 0} \int_0^T\int_\Omega |\nabla\varrho^\tau|^2\leq \int_0^T\int_\Omega |\nabla\varrho|^2$, which can be obtained by the very same estimates (using the strong $L^2_tL^2_x$ convergence to handle the extra term involving $\nabla V$). This proof is presented in Section 4, after two preliminary sections, one on the properties of the solution in continuous-time (Section 2) and one on the properties of the JKO scheme (Section 3).

Then, a similar argument is proposed for the convergence of the second derivatives in space. The strategy consists in finding a first-order quantity which decreases along iterations of the JKO scheme and its dissipation is a second-order quantity which is, up to terms which tend to $0$ when $\tau\to 0$, the very same dissipation which could be obtained for the same quantity along the continuous-in-time flow of the PDE. This is done by looking at the evolution in time of the quantities
$$ F_p(\varrho)\colonequals \frac1p \int_{\Omega}\left|\frac{\nabla \varrho}{\varrho}+\nabla V\right|^p \,d\varrho.$$
The particular case $p=2$ is the most important one, as the functional $F_2$, sometimes called {\it Fischer information} (in particular in the case $V=0)$ naturally appears in the Fokker-Planck equation as the dissipation of the entropy $J$ along the solution of the equation (more precisely, we do have $\partial_t J(\varrho_t)=-2F_2(\varrho_t)).$ This fact is widely used in functional inequalities as for instance in the Bakry-Emery theory (see \cite{BGL}, for instance). Our analysis will be based on the evolution in time of the functionals $F_p$ along the JKO scheme, and on the evolution of $F_2$ on both the JKO and the continuous-in-time equation. It is possible to differentiate $F_2$ in time and obtain several terms including the main one $-\int_\Omega \varrho|D^2(\log\varrho+V)|^2$. The same computation may be done on the JKO scheme using the so-called five-gradients inequality introduced by the second author in \cite{five.grad} and applied to the Fokker-Planck equation in \cite{F.DiM}. This requires a finer analysis than what is done in \cite{F.DiM} since the remainders of the inequality will be crucial. Moreover, the dissipation along the steps of the JKO scheme does not provide exactly the desired term $-\int_\Omega \varrho^\tau|D^2(\log\varrho^\tau+V)|^2$ but includes an error term of the order of $||D^2\varphi^\tau_k||_{L^\infty}$, where $\varphi^\tau_k$ is the Kantorovich potential in the optimal transport from $\varrho^\tau_k$ to $\varrho^\tau_{k-1}$.

The strategy to get rid of this error term is the following: we prove uniform upper and lower bounds on $\varrho^\tau$ (which can be proven on the JKO scheme and are now well-known); once we couple these bounds with uniform $C^{0,\alpha}$ estimates on $\varrho^\tau$ this implies a uniform bound on the potentials $\varphi^\tau_k$ in $C^{2,\alpha}$. Uniform $C^{0,\alpha}$ bounds on $\varrho^\tau$ are obtained in a non-optimal way: we indeed suppose $F_p(\varrho_0)<+\infty$ for $p>d$ and prove that this quantity stays bounded in time, which implies the H\"older behavior because of standard Sobolev injections. We then use, again, the lower bound on the densities to obtain upper bounds of the form $|\nabla \varphi^\tau_k|\leq C\tau^\beta$ for $\beta>0$, which means that the gradient of the Kantorovich potentials tend uniformly to $0$ (this estimate is obtained using an argument from \cite{Bouchitte} which provides an $L^\infty$ estimate on the displacement $|T(x)-x|$) and this, together with the uniform H\"older bound on $D^2\varphi^\tau_k$, provides uniform convergence to $0$ of the Hessian as well. 

Once we get rid of the error terms, the fact that the estimates in discrete and in continuous time are essentially the same allows to obtain $\int_0^T \int_\Omega \varrho^\tau|D^2(\log\varrho^\tau+V)|^2\to \int_0^T \int_\Omega \varrho|D^2(\log\varrho+V)|^2$ which provides strong $L^2_{t,x}$ convergence of $\sqrt{\varrho^\tau}D^2\log\varrho^\tau$ and, after carefully using again the upper and lower bounds on $\varrho^\tau$, we obtain $D^2\varrho^\tau\to D^2\varrho$ in $L^2_{t,x}$.
\bigskip

\textbf{Acknowledgement.} The first author acknowledges the support of the ANR project MAGA (ANR-16-CE40-0014) and of the Lagrange Mathematics and Computation Research Center project on Optimal Transportation. The second author acknowledges the partial support of  the Austrian Science Fund (FWF) through the project F65 ``Taming Complexity in Partial Differential Systems'', as well as the support of \'Ecole normale sup\'erieure de Lyon and its scholarship program Amp\`ere for the year spent in Lyon when the work leading to this paper started as a part of his master thesis.

\section{Basics on the Fokker-Planck equation}

In our note we will consider the Fokker-Planck equation in a finite interval $[0,T]$  and a convex bounded domain  $\Omega \subset \mathbb{R}^d$ whose boundary $\partial \Omega$ is smooth enough. The drift in the equation will be of gradient type and autonomous.  We denote by $\vec{n}$   the  exterior unit normal vector of the boundary $\partial \Omega.$ We consider the Cauchy problem for the Fokker-Planck equation with no-flux boundary condition, i.e.,
\begin{equation}\label{F-P}
\begin{cases}
\partial_t \varrho(t, x)=\Delta \varrho(t,x)+\text{div} (\varrho(t,x) \nabla V(x)),  &\text{  } (t, x) \in (0,T]\times \Omega,\\
\nabla \varrho(t,x) \cdot\vec{n}(x)+\varrho(t,x) \nabla V(x)\cdot\vec{n}(x)=0, &\text{  } (t,x) \in [0,T]\times \partial \Omega,\\
\varrho(0,x)=\varrho_0(x), &\text{ } x\in \Omega,
\end{cases}
\end{equation}
where   $\varrho_0 \in \mathcal{P}(\Omega)\cap L^1_+(\Omega).$ \\
The initial total mass is $\int_{\Omega}\varrho_0(x)dx=1$ and  it can be formally seen  that it is  preserved.
Well-known results on  parabolic differential equations (see \cite{Lady},  \cite{A.L} ) let us to state the following properties of \eqref{F-P}:
\begin{proposition}\label{th:reg.F-P}
Let $[0,T]$ be a finite interval and $\Omega$ be bounded domain whose boundary $\partial \Omega$ is Lipschitz continuous. Then the following hold: 
 \begin{itemize}
 \item if $V$ is Lipschitz continuous and the initial data  $\varrho_0\in L^2(\Omega),$ then there exists a unique solution  $\varrho $ of (\ref{F-P})  in $ L^2([0,T];H^1(\Omega))\cap C([0,T];L^2(\Omega)).$  If $\varrho_0\in H^1(\Omega),$  then we have $\varrho\in L^2([0,T];H^2(\Omega))\cap C([0,T];H^1(\Omega))$.
 \item if $\varrho_0 \in C(\bar{\Omega})$ and  $V \in C^2(\bar{\Omega}),   $ then $\varrho\in C([0,T]\times \Omega)$ is differentiable with respect to $t$ in $(0,T] \times \bar{\Omega}$;  $\varrho(t,\cdot)$ belongs to $W^{2,p}(\Omega)$ for every $p\geq 1$. 
\item If $\varrho_0$ is bounded from below and above by two positive constants, then the same (for possibly different constants) holds for $\varrho$.
\item if $\varrho_0 \in C(\bar{\Omega}),$ $ \partial \Omega$ has $C^{2+\alpha}$ regularity and $V \in C^{2+\alpha}(\bar{\Omega})$ for $\alpha \in (0,1),$ then $\varrho \in {C}^{1+\frac{\alpha}{2}, 2+\alpha}((0,T] \times \bar{\Omega})$. If moreover $\partial \Omega$ and $V$ are $C^{3+\alpha}$, then $\varrho(t,\cdot)\in C^{3+\alpha}(\bar{\Omega})$ for every $t>0$.
\end{itemize}
\end{proposition}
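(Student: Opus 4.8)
The statement collects four classical facts about the linear uniformly parabolic equation \eqref{F-P}, and the plan is to treat each separately, in every case pointing to the standard tool rather than reproving textbook theory; the references \cite{Lady,A.L} contain all of it.

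For the first item I would rewrite \eqref{F-P} in divergence form $\partial_t\varrho=\nabla\cdot(\nabla\varrho+\varrho\nabla V)$ and appeal to the Lions--Galerkin construction for linear parabolic equations with bounded (here Lipschitz) coefficients and conormal (no-flux) boundary data: this yields a unique solution in $L^2([0,T];H^1(\Omega))\cap C([0,T];L^2(\Omega))$. The underlying energy estimate is obtained by testing the equation against $\varrho$ itself; the boundary term drops out by the no-flux condition, and the drift contribution $\int_\Omega\varrho\,\nabla\varrho\cdot\nabla V$ is absorbed into $\tfrac12\|\nabla\varrho\|_{L^2}^2$ at the cost of a lower-order term handled by Gr\"onwall, which also gives uniqueness. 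To upgrade to $L^2([0,T];H^2(\Omega))\cap C([0,T];H^1(\Omega))$ when $\varrho_0\in H^1(\Omega)$ I would differentiate the equation in time (or work with difference quotients in time) to control $\partial_t\varrho$ in $L^2([0,T]\times\Omega)$, whence $\Delta\varrho=\partial_t\varrho-\nabla\cdot(\varrho\nabla V)\in L^2$, and then invoke $H^2$ elliptic regularity for the Neumann problem; this last step is the one place that genuinely uses that $\Omega$ is convex with smooth boundary (as is assumed throughout the paper), since for a generic Lipschitz domain the global $H^2$ estimate can fail.

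For the second and fourth items the plan is purely to invoke parabolic regularity theory. With $\varrho_0\in C(\bar\Omega)$ and $V\in C^2(\bar\Omega)$, parabolic $L^p$ maximal regularity gives $\varrho(t,\cdot)\in W^{2,p}(\Omega)$ for every $p\ge 1$ together with differentiability in $t$ on $(0,T]$; under the stronger $C^{2+\alpha}$ assumptions on $V$ and $\partial\Omega$, parabolic Schauder estimates up to the boundary give $\varrho\in C^{1+\alpha/2,\,2+\alpha}((0,T]\times\bar\Omega)$, and one further bootstrap under $C^{3+\alpha}$ assumptions gives $\varrho(t,\cdot)\in C^{3+\alpha}(\bar\Omega)$ for $t>0$. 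The only recurring subtlety is that all these H\"older estimates must be stated on $(0,T]$ and not on $[0,T]$, because $\varrho_0$ is merely assumed continuous.

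The third item is the one where a short explicit computation pays off, so I would actually carry it out. Setting $u:=e^{V}\varrho$ and using the identity $\nabla\varrho+\varrho\nabla V=e^{-V}\nabla u$, equation \eqref{F-P} becomes $\partial_t u=\Delta u-\nabla V\cdot\nabla u$ with homogeneous Neumann condition $\nabla u\cdot\vec n=0$ on $\partial\Omega$, an equation with no zeroth-order term. The weak (or, thanks to the previous items, classical) maximum principle then forces $\min_{\bar\Omega}u(0,\cdot)\le u(t,x)\le\max_{\bar\Omega}u(0,\cdot)$; since $V\in C(\bar\Omega)$ is bounded and $\varrho_0$ lies between two positive constants, so does $u_0=e^{V}\varrho_0$, hence so does $u$, and therefore so does $\varrho=e^{-V}u$, with new constants depending only on $\min_{\bar\Omega}V$, $\max_{\bar\Omega}V$ and the bounds on $\varrho_0$. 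Altogether the only steps requiring any genuine care are the $H^2$ bound in the first item (where convexity/smoothness of $\Omega$ is used) and the bookkeeping of the $t=0$ exclusion in the H\"older estimates; everything else is standard parabolic theory.
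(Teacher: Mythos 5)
Your proposal is correct and takes essentially the same route as the paper, which in fact offers no proof at all for this proposition and simply defers to the classical parabolic references \cite{Lady} and \cite{A.L}; your sketch correctly identifies the standard machinery (Galerkin/energy methods for the $L^2H^1$ well-posedness, maximal regularity and Schauder estimates for the smoothing statements, and the maximum principle for the bounds). Your explicit change of variables $u=e^{V}\varrho$ reducing \eqref{F-P} to $\partial_t u=\Delta u-\nabla V\cdot\nabla u$ with homogeneous Neumann data is a correct and useful addition, as is your caveat that the global $H^2$ estimate genuinely requires convexity (or $C^{1,1}$ regularity) of $\Omega$ rather than the bare Lipschitz boundary stated in the hypothesis.
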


 
 Once we know that the solution of (\ref{F-P}) exists, is unique, and is smooth, we are interested in evaluating, and in particular differentiating in time, some quantities involving the solution. First we consider the following classical statement
 \begin{proposition}\label{continuous L2 derivative}
 Let $\varrho_0\in L^2(\Omega)$ and  $\varrho_t$ be the unique solution of (\ref{F-P}). Then we have
  $$
 \displaystyle \int_{\Omega}\varrho_T^2(x)\,dx-\int_{\Omega}\varrho_0^2(x)\,dx=-2\int_0^T \int_{\Omega}|\nabla \varrho_t(x)|^2\,dxdt -2\int_0^T \int_{\Omega} \varrho_t(x) \, \nabla \varrho_t(x) \cdot \nabla V(x)\,dxdt. 
 $$
\end{proposition}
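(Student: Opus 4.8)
The plan is to obtain the identity by testing the equation \eqref{F-P} against the solution $\varrho_t$ itself and integrating in space and time; the boundary term produced by the spatial integration by parts is killed exactly by the no-flux boundary condition. Formally, if $\varrho$ were smooth one would write
$$\frac{d}{dt}\,\frac12\int_\Omega \varrho_t^2 = \int_\Omega \varrho_t\,\partial_t\varrho_t = \int_\Omega \varrho_t\big(\Delta\varrho_t+\nabla\cdot(\varrho_t\nabla V)\big) = \int_{\partial\Omega}\varrho_t\big(\nabla\varrho_t+\varrho_t\nabla V\big)\cdot\vec{n} - \int_\Omega \nabla\varrho_t\cdot\big(\nabla\varrho_t+\varrho_t\nabla V\big),$$
and the boundary integral vanishes by the second line of \eqref{F-P}; integrating from $0$ to $T$ then gives the claimed formula. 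The only genuine issue is that for $\varrho_0$ merely in $L^2(\Omega)$ we do not have enough time regularity to run this computation pointwise in $t$.

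To make it rigorous I would use the regularity provided by Proposition \ref{th:reg.F-P}: since $\varrho_0\in L^2(\Omega)$, we have $\varrho\in L^2([0,T];H^1(\Omega))\cap C([0,T];L^2(\Omega))$, so in particular $\varrho$ and $\nabla\varrho$ belong to $L^2([0,T]\times\Omega)$, and since $V$ is Lipschitz the flux $J_t\colonequals \nabla\varrho_t+\varrho_t\nabla V$ also belongs to $L^2([0,T]\times\Omega)$. The weak formulation of \eqref{F-P} incorporating the no-flux condition reads $\tfrac{d}{dt}\int_\Omega \varrho_t\phi = -\int_\Omega J_t\cdot\nabla\phi$ for every $\phi\in H^1(\Omega)$, which shows $\partial_t\varrho\in L^2([0,T];(H^1(\Omega))^*)$ with $\|\partial_t\varrho_t\|_{(H^1)^*}\le\|J_t\|_{L^2(\Omega)}$. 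We are therefore in the setting of the classical Lions--Magenes integration-by-parts formula in Bochner spaces for the Gelfand triple $H^1(\Omega)\hookrightarrow L^2(\Omega)\hookrightarrow (H^1(\Omega))^*$: if $u\in L^2([0,T];H^1)$ and $u'\in L^2([0,T];(H^1)^*)$, then $t\mapsto\|u(t)\|_{L^2}^2$ is absolutely continuous and $\tfrac{d}{dt}\|u(t)\|_{L^2}^2 = 2\langle u'(t),u(t)\rangle$ for a.e. $t$.

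Applying this with $u=\varrho$ and using the weak formulation with $\phi=\varrho_t$ (legitimate since $\varrho_t\in H^1(\Omega)$ for a.e. $t$) gives $\tfrac{d}{dt}\int_\Omega\varrho_t^2 = -2\int_\Omega J_t\cdot\nabla\varrho_t = -2\int_\Omega|\nabla\varrho_t|^2 - 2\int_\Omega\varrho_t\,\nabla\varrho_t\cdot\nabla V$ for a.e. $t\in[0,T]$; integrating this identity on $[0,T]$, using the absolute continuity of $t\mapsto\int_\Omega\varrho_t^2$, yields the statement. An alternative, if one prefers to avoid the abstract lemma, is to prove the identity first for $\varrho_0\in H^1(\Omega)$ — where Proposition \ref{th:reg.F-P} gives $\varrho\in L^2([0,T];H^2)\cap C([0,T];H^1)$ and all manipulations above are classical — and then approximate a general $\varrho_0\in L^2(\Omega)$ in $L^2$ by $H^1$ data, passing to the limit in every term by means of the continuity of the solution map together with the uniform $L^2_tH^1_x$ bound. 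In either approach the main obstacle is precisely this justification of the time differentiation at low regularity; the spatial integration by parts and the cancellation of the boundary term are immediate once the no-flux condition is invoked.
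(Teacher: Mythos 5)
Your proof is correct and follows essentially the same route as the paper: both identify $\partial_t\varrho=\Delta\varrho+\mathrm{div}(\varrho\nabla V)\in L^2([0,T];(H^1(\Omega))')$ and invoke the Lions--Magenes/Temam integration-by-parts formula for the Gelfand triple $H^1(\Omega)\hookrightarrow L^2(\Omega)\hookrightarrow (H^1(\Omega))'$ (the paper's Lemma \ref{le:partial_t rho}) to differentiate $t\mapsto\|\varrho_t\|_{L^2}^2$, then integrate in time. The additional approximation argument you sketch is not needed, but the main argument matches the paper's.
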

\begin{proof}
In order to obtain this result it is enough to differentiate in time the function $t\mapsto \int_\Omega\varrho_t^2$. To do this, we use a Lemma \ref{le:partial_t rho}, which is a general lemma from functional analysis. Since $\varrho_t \in L^2([0,T];H^1(\Omega)) $ we have $\partial_t \varrho_t=\Delta \varrho_t +\text{div}(\varrho_t \nabla V) \in L^2([0,T];H^{-1}(\Omega)). $ Thus, $\varrho_t$ satisfies  Lemma \ref{le:partial_t rho}  in the case of $V=H^1(\Omega)$ and $H=L^2(\Omega).$  Consequently, we have that $\varrho_t \in C([0,T];L^2(\Omega))$ and 
 $$
 \displaystyle \int_{\Omega}\varrho_T^2(x)\,dx-\int_{\Omega}\varrho_0^2(x)\,dx=2\int_0^T\langle \partial_t \varrho_t,\varrho_t \rangle_{H^{-1},H^1}\,dt
$$
and it is enough to use the expression for $\partial_t\varrho$ and integrate in time to obtain the result.
\end{proof}
 
 In the above proof we mentioned a general functional analysis fact, which is recalled here below. To introduce it, let us consider a Hilbert space $H$ endowed with the norm $||\cdot||_{H}$, a  Banach space $V$, and we assume that $V$ is reflexive, $V\subset H$ with dense and bounded embedding. The following Lemma is proven, for instance in \cite[Lemma 1.2, page 260]{Temam}.
  \begin{lemma}\label{le:partial_t rho}
  The following inclusion 
  $$
  L^2([0,T];V)\cap H^1([0,T];V')\subset C([0,T];H)
  $$
  holds true. Moreover, for any $g \in L^2([0,T];V)\cap H^1([0,T];V')$ there holds 
  $$
  t\rightarrow ||g(t)||^2_{H} \in W^{1,1}(0,T)
  $$
  and
  \begin{equation}
  \dfrac{d}{dt}||g(t)||^2_{H}=2\langle g'(t),g(t) \rangle_{V',V}  \text{   } a.e. \text{    on   }   (0,T).
  \end{equation}
  \end{lemma}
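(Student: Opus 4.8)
The plan is to reduce the statement to the case of smooth $V$-valued curves and then conclude by a density argument. Write $W\colonequals L^2([0,T];V)\cap H^1([0,T];V')$, normed by $\|g\|_W^2=\|g\|_{L^2(V)}^2+\|g'\|_{L^2(V')}^2$. The starting point is the elementary version of the identity for $g\in C^1([0,T];V)$: since $V\hookrightarrow H$ is dense and continuous and $H$ is identified with a subspace of $V'$, one has $\langle f,u\rangle_{V',V}=(f,u)_H$ whenever $u\in V$ and $f\in H$; therefore $t\mapsto\|g(t)\|_H^2$ is $C^1$ with derivative $2(g'(t),g(t))_H=2\langle g'(t),g(t)\rangle_{V',V}$, and integrating gives
\begin{equation}\label{eq:smoothibp}
\|g(t)\|_H^2-\|g(s)\|_H^2=2\int_s^t\langle g'(r),g(r)\rangle_{V',V}\,dr,\qquad 0\le s\le t\le T.
\end{equation}

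Next I would extract from \eqref{eq:smoothibp} the \emph{a priori} bound that controls the sup-norm in $H$ by the $W$-norm. Applying \eqref{eq:smoothibp} to $h\colonequals g_1-g_2$ for two curves in $C^1([0,T];V)$, estimating the integrand by $\|h'(r)\|_{V'}\|h(r)\|_V\le\tfrac12(\|h'(r)\|_{V'}^2+\|h(r)\|_V^2)$, and then averaging the resulting inequality over $s\in[0,T]$, one obtains $\sup_{t\in[0,T]}\|h(t)\|_H^2\le C\|h\|_W^2$, where the continuous embedding $V\hookrightarrow H$ is used to absorb $\|h\|_{L^2(H)}$. Thus the inclusion $C^1([0,T];V)\hookrightarrow C([0,T];H)$ is continuous for the $W$-norm on the source. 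Now one uses that $C^1([0,T];V)$ is dense in $W$: given $g\in W$, extend it to a slightly larger interval (say $(-T,2T)$) by reflection across $t=0$ and $t=T$, check that the extension still lies in $L^2_{loc}(V)\cap H^1_{loc}(V')$, and mollify in time, $g_\varepsilon\colonequals g*\eta_\varepsilon$; the standard mollifier properties give $g_\varepsilon\in C^\infty([0,T];V)$ with $g_\varepsilon\to g$ in $L^2(V)$ and $g_\varepsilon'=g'*\eta_\varepsilon\to g'$ in $L^2(V')$, i.e. $g_\varepsilon\to g$ in $W$. By the previous estimate $(g_\varepsilon)$ is Cauchy in $C([0,T];H)$, hence converges uniformly to some $\tilde g\in C([0,T];H)$; since $g_\varepsilon\to g$ also in $L^2(H)$, we get $\tilde g=g$ a.e., so $g$ has a continuous $H$-valued representative, proving $W\subset C([0,T];H)$.

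Finally, for the differentiation formula I would pass to the limit $\varepsilon\to0$ in \eqref{eq:smoothibp} written for $g_\varepsilon$: the left-hand side converges uniformly by the bound above, and on the right-hand side $\langle g_\varepsilon',g_\varepsilon\rangle_{V',V}\to\langle g',g\rangle_{V',V}$ in $L^1(0,T)$, because $g_\varepsilon'\to g'$ in $L^2(V')$, $g_\varepsilon\to g$ in $L^2(V)$, and $|\langle a,b\rangle_{V',V}|\le\|a\|_{V'}\|b\|_V$. Hence \eqref{eq:smoothibp} holds for $g$ itself. Since $g'\in L^2(V')$ and $g\in L^2(V)$, Cauchy--Schwarz shows $t\mapsto\langle g'(t),g(t)\rangle_{V',V}$ belongs to $L^1(0,T)$, so \eqref{eq:smoothibp} is exactly the statement that $t\mapsto\|g(t)\|_H^2\in W^{1,1}(0,T)$ with a.e. derivative $2\langle g'(t),g(t)\rangle_{V',V}$, which is the claim.

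I expect the density step to be the only real obstacle, and within it the delicate point is legitimizing the time-mollification near the endpoints $t=0$ and $t=T$: one needs an extension of $g$ outside $[0,T]$ that is simultaneously compatible with the $L^2(V)$ and the $H^1(V')$ structures (extending by zero would create jumps and ruin the $H^1(V')$ regularity), which is why the reflection extension — or, alternatively, a Steklov-averaging argument on shrinking subintervals together with a separate treatment of the endpoints — is required. Everything else is routine. It is worth remarking that the reflexivity of $V$ assumed in the statement is not actually used here: the argument above works for any Banach space $V$ densely and continuously embedded in $H$.
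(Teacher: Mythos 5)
Your argument is correct and is essentially the classical proof of this lemma (regularization in time after reflection across the endpoints, the a priori bound $\sup_t\|h(t)\|_H^2\le C\|h\|_{L^2(V)\cap H^1(V')}^2$ obtained by averaging the smooth identity over $s$, and passage to the limit); the paper does not prove the statement but cites it from Temam, where the proof follows the same lines. Your closing remarks on the endpoint extension and on the dispensability of reflexivity are accurate.
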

 
In the above computation, we saw that a zero-order quantity (here $\int \varrho^2$) is the integral in time of a first-order quantity (which is in our case given by $\int|\nabla \varrho^2|+ \varrho\nabla \varrho \cdot \nabla V$). We now need to look at higher-order quantities. In particular, we will consider the functional
\begin{equation}\label{eq:Fisher}
 F_p(\varrho)\colonequals \frac1p \int_{\Omega}\left|\frac{\nabla \varrho}{\varrho}+\nabla V\right|^p \,d\varrho
\end{equation}
defined for $\varrho \in W^{1,1}(\Omega)\cap \mathcal{P}(\Omega).$ We will mainly look at $F_2$.

\begin{lemma}\label{lem:Fisher F-P} 
Suppose $0<T< +\infty,$  $\Omega$ is a bounded domain with $C^{3+\alpha}$ boundary,  $V \in C^{3+\alpha}(\bar{\Omega})   $ for some $\alpha \in (0,1)$ and $\varrho_0 \in C(\bar{\Omega})$ is positive. If  $\varrho$ is  the solution of (\ref{F-P}), then for $t>0$ we have 
\begin{equation*}
\partial_t F_2(\varrho_t)=-\int_{\Omega} |D^2(\log{\varrho}+V)|^2\varrho \,dx-\int_{\Omega}(\nabla(\log{\varrho}+V) )^{T}\cdot D^2V\cdot \nabla(\log{\varrho}+V)  \varrho\, dx+
\end{equation*}
\begin{equation}\label{eq(4)}\int_{\partial \Omega} (\nabla(\log{\varrho}+V) )^{T}\cdot D^2(\log{\varrho}+V) \cdot \vec{n}\varrho\, d\mathcal{H}^{d-1}.
\end{equation}
\end{lemma}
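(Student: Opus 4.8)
The plan is to set $u\colonequals \log\varrho+V$, so that $\nabla u=\frac{\nabla\varrho}{\varrho}+\nabla V$ and $F_2(\varrho_t)=\frac12\int_{\Omega}|\nabla u|^2\varrho\,dx$; this is, morally, the second-derivative-of-entropy ($\Gamma_2$/Bakry--\'Emery) computation. With this notation the equation in \eqref{F-P} reads $\partial_t\varrho=\nabla\cdot(\varrho\nabla u)$ and the no-flux boundary condition becomes $\varrho\,(\nabla u\cdot\vec{n})=0$ on $\partial\Omega$, hence $\nabla u\cdot\vec{n}=0$ on $\partial\Omega$ since $\varrho>0$. Under the present assumptions Proposition \ref{th:reg.F-P} guarantees, for $t>0$, that $\varrho_t$ is bounded away from zero and of class $C^{3+\alpha}$ in space, with enough joint regularity in $(t,x)$ (parabolic Schauder estimates) to make $u$ as smooth as needed and to differentiate $t\mapsto F_2(\varrho_t)$ under the integral sign.

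First I would compute, using $\partial_t\nabla u=\nabla\partial_t u$,
\[
\frac{d}{dt}F_2(\varrho_t)=\int_{\Omega}\nabla u\cdot\nabla(\partial_t u)\,\varrho\,dx+\frac12\int_{\Omega}|\nabla u|^2\,\partial_t\varrho\,dx.
\]
For the second term, substituting $\partial_t\varrho=\nabla\cdot(\varrho\nabla u)$ and integrating by parts makes the boundary term $\frac12\int_{\partial\Omega}|\nabla u|^2\varrho\,(\nabla u\cdot\vec{n})$ vanish by the no-flux condition, and leaves $-\frac12\int_{\Omega}\nabla(|\nabla u|^2)\cdot\nabla u\,\varrho=-\int_{\Omega}(\nabla u)^{T}\cdot D^2 u\cdot\nabla u\,\varrho$. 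For the first term I would use that $\partial_t u=\partial_t\varrho/\varrho=\Delta u-\nabla V\cdot\nabla u+|\nabla u|^2$; writing $Lu\colonequals\Delta u-\nabla V\cdot\nabla u$, the contribution of the $|\nabla u|^2$ part is $\int_{\Omega}\nabla u\cdot\nabla(|\nabla u|^2)\varrho=2\int_{\Omega}(\nabla u)^{T}\cdot D^2 u\cdot\nabla u\,\varrho$.

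The contribution of the $Lu$ part is where the Bochner ($\Gamma_2$) identity enters: in $\mathbb{R}^d$ one has the pointwise identity $\nabla u\cdot\nabla(Lu)=\frac12 L|\nabla u|^2-|D^2u|^2-(\nabla u)^{T}\cdot D^2V\cdot\nabla u$ (the cross terms $\nabla u\cdot D^2u\cdot\nabla V$ and $\nabla V\cdot D^2 u\cdot\nabla u$ cancel because $D^2u$ is symmetric). Multiplying by $\varrho$, integrating, and then integrating $\frac12\int_{\Omega}L|\nabla u|^2\,\varrho$ by parts (using $\nabla\varrho+\varrho\nabla V=\varrho\nabla u$) produces $-\int_{\Omega}(\nabla u)^{T}\cdot D^2u\cdot\nabla u\,\varrho+\int_{\partial\Omega}(\nabla u)^{T}\cdot D^2u\cdot\vec{n}\,\varrho\,d\mathcal{H}^{d-1}$; this is the only surviving boundary term, and it is exactly the one appearing in \eqref{eq(4)}. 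Collecting the three contributions, the $(\nabla u)^{T}\cdot D^2u\cdot\nabla u$ terms add up with coefficient $2-1-1=0$ and what remains is precisely the claimed identity.

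The one genuinely delicate point is the justification of differentiating under the integral sign and of the boundary integrations by parts: this needs $\varrho$ bounded away from zero (to control $\log\varrho$ and its derivatives), together with spatial regularity beyond $C^2$ and continuity of $\partial_t\varrho$ and of its spatial derivatives for $t>0$, all of which is provided by Proposition \ref{th:reg.F-P} under the stated hypotheses on $\partial\Omega$, $V$ and $\varrho_0$. Everything else is a bookkeeping computation.
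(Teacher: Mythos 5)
Your proof is correct: I checked the Bochner identity $\nabla u\cdot\nabla(Lu)=\tfrac12 L|\nabla u|^2-|D^2u|^2-(\nabla u)^T\cdot D^2V\cdot\nabla u$, the identity $\partial_t u=Lu+|\nabla u|^2$, and the bookkeeping of the three contributions, and the terms $(\nabla u)^T\cdot D^2u\cdot\nabla u$ do cancel with coefficient $2-1-1=0$, leaving exactly \eqref{eq(4)}. The route is organized differently from the paper's. The paper never changes variables: it expands $2\partial_t F_2$ directly in coordinates as a sum over $i,j$ of products of partial derivatives of $\varrho$ and $V$, substitutes $\partial_t\varrho=\Delta\varrho+\mathrm{div}(\varrho\nabla V)$ in each of the two resulting blocks, integrates by parts componentwise, checks that the first boundary integral vanishes by no-flux and rewrites the second, and only at the very end recognizes the combinations as $\nabla(\log\varrho+V)$ and $D^2(\log\varrho+V)$. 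You instead set $u=\log\varrho+V$ from the outset, rewrite the PDE as $\partial_t\varrho=\nabla\cdot(\varrho\nabla u)$ and the flux condition as $\nabla u\cdot\vec n=0$, and let the Bochner/$\Gamma_2$ identity absorb the cancellations. Your version is shorter, makes the single surviving boundary term and the origin of the $D^2V$ term transparent, and connects the lemma to the Bakry--\'Emery framework the introduction alludes to; the paper's version is more elementary in that it requires no identity beyond symmetry of mixed partials, at the cost of heavier index manipulation. Your closing remark on regularity (positivity of $\varrho$ and $C^{3+\alpha}$ spatial regularity for $t>0$ from Proposition \ref{th:reg.F-P} to justify differentiation under the integral and the integrations by parts) is the same justification the paper gives in its first line.
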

\begin{proof} The assumptions provide that  $\varrho(t,\cdot)$ is also positive  and belongs to $ C^{3+\alpha}(\bar{\Omega})$ for every $t\in (0,T].$ We have
$$\small
 2\displaystyle \partial_t F_2(\varrho_t) =\partial_t \int_{\Omega} \sum_{i=1}^{d}\left(\frac{\partial_i \varrho}{\varrho}+\partial_i V \right)^2 \varrho\, dx =
 $$
$$ \small
 \sum_{i=1}^d \int_{\Omega} \partial_t \varrho \left(\frac{\partial_i\varrho}{\varrho}+\partial_i V\right)^2 dx + 2\sum_{i=1}^d \int_{\Omega}
\displaystyle  \left(\frac{\partial_i\varrho}{\varrho}+\partial_i V\right) \left(\partial_t \partial_i\varrho-\frac{\partial_i \varrho \, \partial_t \varrho }{\varrho}\right)dx=
$$
$$\small
\displaystyle \int_{\Omega}\sum_{i=1}^d \partial_t\varrho \left[ \left(\partial_i V\right)^2- \left(\frac{\partial_i \varrho}{\varrho}\right)^2 \right]dx+ 2\sum_{i=1}^d \int_{\Omega}  \left(\frac{\partial_i\varrho}{\varrho}+\partial_i V\right) \partial_t \partial_i \varrho\, dx.
$$
We look at the different parts of the last integral. First we use the equation $ \partial_t \varrho=\Delta \varrho+\text{div} (\varrho \nabla V)$ and integrate  by parts.
$$\small
\displaystyle \int_{\Omega}\sum_{i=1}^d \partial_t\varrho \left[ \left(\partial_i V\right)^2- \left(\frac{\partial_i \varrho}{\varrho}\right)^2 \right]\,dx= \int_{\Omega}\sum_{i,j=1}^d \left[ \left(\partial_i V\right)^2- \left(\frac{\partial_i \varrho}{\varrho}\right)^2 \right] \partial_j \left(\partial_j \varrho +\varrho\, \partial_j V\right)\,dx= 
$$
$$
\displaystyle -2 \sum_{i,j=1}^d  \int_{\Omega}\left[ \varrho\, \partial_i V \partial_{ij}V-\frac{\partial_i \varrho \, \partial_{ij}\varrho}{\varrho}+\frac{(\partial_i \varrho)^2 \, \partial_{j}\varrho}{\varrho^2}\right] \left(\frac{\partial_j}{\varrho}  + \partial_j V\right)\,dx+
$$
$$
\displaystyle \sum_{i,j=1}^d \int_{\partial \Omega} \left[ \left(\partial_i V\right)^2- \left(\frac{\partial_i \varrho}{\varrho}\right)^2 \right] \left(\frac{\partial_j \varrho}{\varrho} 
  + \partial_j V\right) \varrho n_j\, d\mathcal{H}^{d-1}.
$$

We now compute the second part
$$
\displaystyle 2\sum_{i=1}^d \int_{\Omega} \left(\frac{\partial_i\varrho}{\varrho}+\partial_i V\right) \partial_t \partial_i \varrho \, dx=2\sum_{i,j=1}^d \int_{\Omega} \left(\frac{\partial_i\varrho}{\varrho}+\partial_i V\right)(\partial_{ijj}\varrho+\partial_{ij}(\varrho \partial_j V))\,dx=
$$
$$\small \displaystyle
-2\sum_{i,j=1}^d \int_{\Omega} \left( \frac{\partial_{ij}\varrho}{\varrho} +\partial_{ij} V-\frac{\partial_i \varrho \, \partial_j \varrho}{{\varrho}^2}\right)(\partial_{ij}\varrho+\varrho \partial_{ij}V+ \partial_i \varrho\, \partial_j V)\,dx+
$$
$$\small \displaystyle
2\sum_{i,j=1}^{d} \int_{\partial \Omega}\left(\frac{\partial_i\varrho}{\varrho}+\partial_i V\right)(\partial_{ij}\varrho+\varrho \partial_{ij}V+ \partial_i \varrho\,  \partial_j V)n_j \,d\mathcal{H}^{d-1}=
$$
$$\small \displaystyle
-2\sum_{i,j=1}^d \int_{\Omega} \left( \frac{\partial_{ij}\varrho}{\varrho} +\partial_{ij} V-\frac{\partial_i\varrho \,  \partial_j \varrho}{{\varrho}^2}\right)^2 \varrho\, dx
$$ 
$$\displaystyle
-2\sum_{i,j=1}^d \int_{\Omega} \left( \frac{\partial_{ij}\varrho}{\varrho} +\partial_{ij} V-\frac{\partial_i \varrho \, \partial_j \varrho}{{\varrho}^2}\right)\partial_i \varrho \left(\frac{\partial_j\varrho}{\varrho}+\partial_j V\right)\,dx
$$
$$\small \displaystyle
+2\sum_{i,j=1}^{d} \int_{\partial \Omega}\left(\frac{\partial_i\varrho}{\varrho}+\partial_i V\right)(\partial_{ij}\varrho+\varrho \partial_{ij}V+ \partial_i \varrho\, \partial_j V)n_j\, d\mathcal{H}^{d-1}.
$$
We consider the integrals over the boundary of $\Omega.$ Because of the no-flux boundary condition the first boundary integral vanishes:
$$
\displaystyle \sum_{i,j=1}^d \int_{\partial \Omega} \left[ \left(\partial_i V\right)^2- \left(\frac{\partial_i \varrho}{\varrho}\right)^2 \right] \left(\frac{\partial_j \varrho}{\varrho} 
  + \partial_j V\right) \varrho n_j \,d\mathcal{H}^{d-1}=0.
$$
Because of the same reason the second boundary integral can be written as follows
$$\small \displaystyle
2\sum_{i,j=1}^{d} \int_{\partial \Omega}\left(\frac{\partial_i\varrho}{\varrho}+\partial_i V\right)(\partial_{ij}\varrho+\varrho \partial_{ij}V+ \partial_i \varrho\cdot  \partial_j V)n_j \,d\mathcal{H}^{d-1}=
$$
$$\small \displaystyle
2\sum_{i,j=1}^{d} \int_{\partial \Omega} \varrho \left(\frac{\partial_i\varrho}{\varrho}+\partial_i V\right)\left(\frac{\partial_{ij}\varrho}{\varrho}+ \partial_{ij}V-\frac{\partial_i \varrho\cdot  \partial_j \varrho}{{\varrho}^2}\right)n_j \,d\mathcal{H}^{d-1}.
$$

Consequently, we have 
$$\small
 \displaystyle \partial_t F_2(\varrho)=-\sum_{i,j=1}^d \int_{\Omega} \left( \frac{\partial_{ij}\varrho}{\varrho} +\partial_{ij} V-\frac{\partial_i \varrho \, \partial_j \varrho}{{\varrho}^2}\right)^2 \varrho\, dx-
$$
$$ 
 \sum_{i,j=1}\int_{\Omega} \left(\frac{\partial_i\varrho}{\varrho}+\partial_i V\right)\partial_{ij}V \left(\frac{\partial_j\varrho}{\varrho}+\partial_j V\right) \varrho \,dx
$$ 
$$
 +\sum_{i,j=1}^{d} \int_{\partial \Omega} \varrho \left(\frac{\partial_i\varrho}{\varrho}+\partial_i V\right)\left(\frac{\partial_{ij}\varrho}{\varrho}+ \partial_{ij}V-\frac{\partial_i \varrho\,  \partial_j \varrho}{{\varrho}^2}\right)n_j \,d\mathcal{H}^{d-1}.
 $$
 If we take into account that we have
 $$\partial_{i}(\log{\varrho}+V)=\left(\frac{\partial_i\varrho}{\varrho}+\partial_i V\right) \text{   and  } \partial_{i,j}(\log{\varrho}+V)=\frac{\partial_{ij}\varrho}{\varrho} +\partial_{ij} V-\frac{\partial_i \varrho \, \partial_j \varrho}{{\varrho}^2},$$ we get the desired equality.
\end{proof}

The last term in formula \eqref{eq(4)} can be re-written using the following lemma.
\begin{lemma}\label{vD2hn}
Suppose $\Omega=\{h<0\}$ for a smooth function $h:\R^d\to\R$ with ${\nabla }h\neq 0$ on $\{h=0\}$, so that the exterior normal vector at $x\in\partial\Omega$ is given by $\vec{n}(x)=\nabla h(x)/|\nabla h(x)|$. Let $v:\Omega\to\R^d$ be a smooth vector field such that $v\cdot \vec{n}=0$ on $\partial \Omega$. Then we have the following equality for every $x\in \partial\Omega$
$$v(x)^T \cdot Dv(x)\cdot n(x) =-\frac{v(x)^T\cdot D^2h(x) \cdot v(x) }{|\nabla h(x)|}.$$
\end{lemma}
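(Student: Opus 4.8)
The plan is to use that the scalar function $g\colonequals v\cdot\nabla h=\sum_k v_k\,\partial_k h$ vanishes identically on $\partial\Omega$, and then to differentiate it along the boundary in the direction $v(x)$. For $x\in\partial\Omega$ we have $\vec{n}(x)=\nabla h(x)/|\nabla h(x)|$, so $v\cdot\vec{n}=0$ on $\partial\Omega$ gives
$$g(x)=v(x)\cdot\nabla h(x)=|\nabla h(x)|\,\big(v(x)\cdot\vec{n}(x)\big)=0\qquad\text{for every }x\in\partial\Omega.$$
Since $h$ is smooth on $\R^d$ and $v$ is smooth up to $\partial\Omega$, the function $g$ is smooth on $\bar\Omega$ and in particular $\nabla g$ is well defined on $\partial\Omega$.

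Next I would differentiate $g$ along $\partial\Omega$. Fix $x\in\partial\Omega$. Because $\nabla h(x)\neq 0$, the set $\partial\Omega=\{h=0\}$ is a smooth hypersurface near $x$ whose tangent space at $x$ is $\nabla h(x)^{\perp}$, and the hypothesis $v(x)\cdot\vec{n}(x)=0$ says precisely that $v(x)$ is tangent to $\partial\Omega$ at $x$. Choosing a smooth curve $\gamma\colon(-\varepsilon,\varepsilon)\to\partial\Omega$ with $\gamma(0)=x$ and $\gamma'(0)=v(x)$, the function $t\mapsto g(\gamma(t))$ is constantly zero (as $\gamma$ stays in $\partial\Omega$), hence its derivative at $t=0$ vanishes, which by the chain rule gives $\nabla g(x)\cdot v(x)=0$.

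Finally I would expand $\nabla g\cdot v$ from $g=\sum_k v_k\,\partial_k h$: at the point $x$ this produces the two terms $\sum_{j,k}v_j\,\partial_j v_k\,\partial_k h$ and $\sum_{j,k}v_j v_k\,\partial_{jk}h$. The latter is $v(x)^T\cdot D^2h(x)\cdot v(x)$, while inserting $\partial_k h=|\nabla h|\,n_k$ into the former turns it into $|\nabla h(x)|\,\big(v(x)^T\cdot Dv(x)\cdot\vec{n}(x)\big)$. Setting the sum equal to zero and dividing by $|\nabla h(x)|\neq 0$ yields the claimed identity.

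I do not anticipate any genuine difficulty: the computation is short once one adopts the viewpoint that $v\cdot\nabla h$ vanishes on the level set $\{h=0\}$. The only subtle point, which the auxiliary curve $\gamma$ is there to handle, is that one may \emph{not} differentiate $g$ along the straight segment $t\mapsto x+t\,v(x)$, since that segment generally leaves $\bar\Omega$ immediately (the boundary curves away from its tangent plane); this is also the reason one needs $v$, hence $g$, smooth up to the boundary rather than only in the interior.
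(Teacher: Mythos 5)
Your proof is correct and is essentially the paper's own argument: both exploit that $v\cdot\nabla h$ vanishes identically on $\partial\Omega$ and differentiate it along a boundary curve $\gamma$ with $\gamma(0)=x$, $\gamma'(0)=v(x)$, which is admissible precisely because $v(x)$ is tangent there. Your packaging via the auxiliary scalar $g=v\cdot\nabla h$ and the identity $\nabla g(x)\cdot v(x)=0$ is only a cosmetic rephrasing of the paper's direct differentiation of $v(\gamma(t))\cdot\nabla h(\gamma(t))=0$.
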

\begin{proof}
Given $x\in\partial\Omega$, we consider a smooth curve $\gamma:(-t_0,t_0)\to \partial\Omega$ with $\gamma(0)=x$ and write the equality $v(\gamma(t))\cdot \nabla h(\gamma(t))=0$ for every $t$. Differentiating w.r.t. $t$ we obtain
$$\gamma'(t)^T\cdot Dv(\gamma(t))\cdot \nabla h(\gamma(t))+v(\gamma(t))^T\cdot D^2 h(\gamma(t))\cdot \gamma'(t)=0.$$
We can take $t=0$ and choose a curve with $\gamma'(0)=v(x)$ since $v$ is tangent to the surface $\partial\Omega$, thus obtaining
$$v(x)^T \cdot Dv(x)\cdot \nabla h(x) =-v(x)^T\cdot D^2h(x) \cdot v(x) .$$
It is then enough to divide by $|\nabla h(x)|$ in order to get the claim.
\end{proof}
We then obtain the following formula.
\begin{corollary}\label{coroF2rho}
Suppose $0<T< +\infty,$ take $\Omega=\{h<0\}$ a bounded domain defined as the negativity set of a function $h\in C^2$ with  $\nabla h\neq 0$ on $\{h=0\}$, and $V\in C^2(\bar{\Omega}).$ Given a strictly positive $\varrho_0 \in H^1({\Omega})$ initial datum, let $\varrho$ be  the solution of (\ref{F-P}). We then have
\begin{eqnarray*}
F_2(\varrho_T)-F_2(\varrho_0)&=&-\int_0^Tdt\int_{\Omega} |D^2(\log{\varrho}+V)|^2\varrho \,dx\\
&&-\int_0^Tdt\int_{\Omega}(\nabla(\log{\varrho}+V) )^{T}\cdot D^2V\cdot \nabla(\log{\varrho}+V)  \varrho\, dx\\
&&-\int_0^Tdt\int_{\partial \Omega} (\nabla(\log{\varrho}+V) )^{T}\cdot D^2h \cdot  (\nabla(\log{\varrho}+V) )\varrho\, d\mathcal{H}^{d-1}.
\end{eqnarray*}
\end{corollary}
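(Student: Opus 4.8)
The plan is to obtain the identity by integrating in time the pointwise formula of Lemma~\ref{lem:Fisher F-P} and rewriting its boundary term through Lemma~\ref{vD2hn}; the points requiring care are the passage to $t=0$ in the time integration and, above all, the relaxation of the regularity hypotheses of Lemma~\ref{lem:Fisher F-P} down to those stated here.

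I would first treat the case $h,V\in C^{3+\alpha}$ and $\varrho_0\in C(\bar\Omega)$ positive. Then Proposition~\ref{th:reg.F-P} guarantees that $\varrho(t,\cdot)$ is positive and lies in $C^{3+\alpha}(\bar\Omega)$ for every $t>0$, so Lemma~\ref{lem:Fisher F-P} applies and $t\mapsto F_2(\varrho_t)$ is $C^1$ on $(0,T]$ with $\partial_tF_2(\varrho_t)$ equal to the right-hand side of \eqref{eq(4)}. To the boundary integrand I apply Lemma~\ref{vD2hn} with the vector field $v=\nabla(\log\varrho_t+V)$: it is smooth up to $\partial\Omega$, and its tangency $v\cdot\vec n=0$ there is exactly the no-flux condition of \eqref{F-P} divided by $\varrho_t>0$; since $Dv=D^2(\log\varrho_t+V)$, the lemma turns the last integrand of \eqref{eq(4)} into $-(\nabla(\log\varrho_t+V))^T\cdot D^2h\cdot\nabla(\log\varrho_t+V)/|\nabla h|$, which is the stated expression after normalizing $h$ so that $|\nabla h|\equiv1$ on $\partial\Omega$ (for instance taking $h$ to be the signed distance to $\partial\Omega$). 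It then remains to integrate in time: on $[\varepsilon,T]$ this is the fundamental theorem of calculus, and to let $\varepsilon\to0$ one uses that $F_2(\varrho_\varepsilon)\to F_2(\varrho_0)$ (which follows from $\varrho\in C([0,T];H^1(\Omega))$ together with the lower bound on $\varrho$), that the $D^2V$-term and the boundary term are dominated in $L^1(0,T)$ (the boundary one by $C\|\varrho_t\|_{H^2(\Omega)}^2$, via the trace inequality and the lower bound, hence integrable because $\varrho\in L^2([0,T];H^2(\Omega))$), and that the remaining term $-\int_\Omega|D^2(\log\varrho_t+V)|^2\varrho_t$ has a sign, so monotone convergence yields both finiteness of its time integral and the passage to the limit.

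To reach the stated hypotheses $h,V\in C^2$ and $\varrho_0\in H^1(\Omega)$ strictly positive, I would then approximate: choose $C^\infty$ functions $h_n$ with $\Omega_n=\{h_n<0\}$ converging to $\Omega$, potentials $V_n\to V$ in $C^2$, and positive $\varrho_0^n\in C(\bar\Omega_n)$ converging to $\varrho_0$ in $H^1$ with uniform positive lower bounds, write the identity just proven for the corresponding Fokker--Planck solutions $\varrho^n$, and pass to the limit. The numbers $F_2(\varrho_0^n)$ and $F_2(\varrho_T^n)$ converge by the $H^1$-convergence and the lower bounds, the first-order bulk and boundary integrals converge by the strong $L^2_{t,x}$ convergence of $\varrho^n$ and $\nabla\varrho^n$, and the leading second-order term needs a uniform $L^2([0,T];H^2)$ bound on $\varrho^n$ that is stable under the perturbation of the domain. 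The algebra of combining Lemmas~\ref{lem:Fisher F-P} and \ref{vD2hn} and the time integration are routine, so I expect the genuine obstacle to be precisely this last point --- producing a uniform, domain-stable $L^2_tH^2_x$ estimate on the approximants --- and it is presumably where the convexity of $\Omega$ (so that $D^2h\ge0$ and the boundary term of the identity is favourably signed, making the estimate self-improving) comes into play.
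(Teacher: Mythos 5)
Your strategy matches the paper's own (very terse) proof: both rest on Lemma \ref{lem:Fisher F-P}, an approximation argument to relax the $C^{3+\alpha}$ hypotheses to $C^2$ and $\varrho_0\in H^1$, integration in time using the continuity of $\varrho$ in $H^1$ at $t=0$, and Lemma \ref{vD2hn} to rewrite the boundary term. You have in fact supplied more detail than the paper does on the two delicate points — the justification of the limit $t\to0$ and the domain-stable $L^2_tH^2_x$ bound needed in the approximation — and you correctly spotted that the stated identity suppresses the $1/|\nabla h|$ factor produced by Lemma \ref{vD2hn} (the discrete analogue in Lemma \ref{lem:Fisher JKO} keeps it), which is an inconsistency in the paper's statement rather than in your argument.
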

\begin{proof}
The result is obtained by first generalizing formula \eqref{eq(4)} to the case of $C^2$ regularity by approximation, then integrating in time, using the continuity in $H^1$ of the solution at $t=0$, and finally re-writing the boundary term using Lemma \ref{vD2hn}
\end{proof}

\section{The JKO scheme for the Fokker-Planck equation}

We define the functional $J\colon \mathcal{P}(\Omega)\to \mathbb{R}$ as follows
\begin{equation}\label{Entropy func}
\displaystyle J(\mu)= \begin{cases}\displaystyle
 \int_{\Omega}\dfrac{d\mu}{dx} \log{ \left(\dfrac{d\mu}{dx}\right)}dx+\int_{\Omega}V d\mu &\text{ if } \mu \ll dx\\
+\infty &\text{otherwise.}
\end{cases}
\end{equation}

The main achievement of \cite{JKO} is to view the Fokker-Planck equation (\ref{F-P}) as a gradient flow of the functional $J$ in the metric space $(\mathcal{P}(\Omega),{W}_2)$ (see also \cite{AGS, OTAM,Villani}) and to define a discrete iterated scheme converging to the solution. This scheme is nowadays called Jordan-Kinderlehrer-Otto scheme.

Given $N\in \mathbb{N},$ we set  $\tau \colonequals \frac{T}{N}$ and  we assume  that  the initial data $\varrho_0$  satisfies   $J(\varrho_0)< \infty.$ We define recursively a sequence of probability measures $\{\varrho_k^{\tau}\}_{k=0}^{N}$  such that $\varrho_0^{\tau}\colonequals \varrho_0$ and 

 \begin{equation}\label{Sequence}
 \displaystyle \varrho_{k+1}^{\tau} \in \text{argmin}_{\varrho}\left\{ J(\varrho)+\dfrac{{W}^2_2(\varrho,\varrho_k^{\tau})}{2\tau}\right\}
\end{equation}
We use this sequence to build a curve $\varrho^{\tau}(t)$ in the space of probability measures defined via $\varrho^{\tau}(0)=\varrho_0$ and
$$\varrho^{\tau}(t)=\varrho_{k+1}^{\tau}  \text{   for  } t\in (k\tau,(k+1)\tau] \text{  and } k \in \mathbb{N}\cup\{0\}.$$
In the original work by R. Jordan, D. Kinderlehrer and F. Otto (\cite{JKO}) the above scheme was considered for $\Omega=\mathbb{R}^d,$ and the following important theorem was proven.
\begin{theorem}\label{th:JKO original}
Let $V \in C^{\infty}(\mathbb{R}^d),$  $\varrho_0\in \mathcal{P}_2(\mathbb{R}^d)$ satisfies $J(\varrho_0)<\infty,$ and for given $\tau>0,$ let $\{\varrho_k^{\tau}\}_{k\in \mathbb{N}}$ be defined recursively by (\ref{Sequence}). Define the interpolation 
$$\varrho^{\tau}(t)=\varrho_{k+1}^{\tau}  \text{   for  } t\in (k\tau,(k+1)\tau] \text{  and } k \in \mathbb{N}\cup\{0\}.$$
Then as $\tau \to 0,$ $\varrho^{\tau}(t)\to \varrho(t)$  weakly in $L^1(\mathbb{R}^d) $ for all $t \in (0,\infty),$ where $\varrho \in C^{\infty}((0,\infty)\times \mathbb{R}^d)$ is the unique solution of $$\partial_t \varrho=\Delta \varrho+\mathrm{div}(\varrho \nabla V)$$
with initial condition $\varrho(t)\to \varrho_0$ strongly in $L^1(\mathbb{R}^d)$ for $t \to 0.$  Moreover, $\varrho^{\tau}\to \varrho$  strongly in $L^1((0,T)\times \mathbb{R}^d)$  for all $T<\infty.$
 \end{theorem}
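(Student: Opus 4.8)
The plan is to run the classical \emph{minimizing movements} argument: derive uniform a priori bounds from the variational definition \eqref{Sequence}, deduce compactness of the interpolated curves $\varrho^\tau$, write a discrete weak formulation by exploiting the first-order optimality conditions of each step, and finally pass to the limit, identifying it via uniqueness of the weak solution of \eqref{F-P} with $\Omega=\R^d$.

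\emph{A priori estimates.} Comparing the minimizer $\varrho_{k+1}^\tau$ with the competitor $\varrho_k^\tau$ in \eqref{Sequence} gives the discrete energy--dissipation inequality
$$J(\varrho_{k+1}^\tau)+\frac{W_2^2(\varrho_{k+1}^\tau,\varrho_k^\tau)}{2\tau}\le J(\varrho_k^\tau),$$
so $J(\varrho_k^\tau)$ is non-increasing in $k$. Using that the second moment $\int|x|^2\,d\varrho_k^\tau$ can be controlled along the scheme and that, through the Gaussian comparison $\int\varrho\log\varrho\ge -C(1+\int|x|^2\,d\varrho)$ together with the (at most quadratic) lower bound on $-V$, this bounds $J$ from below, a standard discrete Gr\"onwall argument yields $\sup_k\bigl(\int|x|^2\,d\varrho_k^\tau+|J(\varrho_k^\tau)|\bigr)\le C$ and, summing the dissipation inequality, $\sum_{k=0}^{N-1}W_2^2(\varrho_{k+1}^\tau,\varrho_k^\tau)\le C\tau$. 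By Cauchy--Schwarz the latter gives the $\tfrac12$-H\"older estimate $W_2(\varrho^\tau(s),\varrho^\tau(t))\le C(\sqrt{|t-s|}+\sqrt\tau)$; combined with the uniform moment and entropy bounds (tightness and equi-integrability), a refined Arzel\`a--Ascoli theorem in $(\mathcal{P}_2(\R^d),W_2)$ lets us extract a subsequence with $\varrho^\tau(t)\to\varrho(t)$ narrowly --- in fact weakly in $L^1(\R^d)$ --- for every $t$, uniformly on compact time intervals, for some limit curve $\varrho$.

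\emph{Discrete weak formulation.} For $\zeta\in C_c^\infty(\R^d;\R^d)$ and $\Phi_s=\mathrm{id}+s\zeta$, the first variation at $s=0$ of $J$ along $(\Phi_s)_\#\varrho_{k+1}^\tau$ equals $\int(-\mathrm{div}\,\zeta+\nabla V\cdot\zeta)\,d\varrho_{k+1}^\tau$, and that of $W_2^2(\cdot,\varrho_k^\tau)/2\tau$ equals $\tfrac1\tau\int\nabla\varphi_{k+1}\cdot\zeta\,d\varrho_{k+1}^\tau$, where $\mathrm{id}-\nabla\varphi_{k+1}$ is the optimal transport map from $\varrho_{k+1}^\tau$ to $\varrho_k^\tau$; optimality forces their sum to vanish. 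Taking $\zeta=\nabla\psi$ with $\psi\in C_c^\infty(\R^d)$ and combining with the transport identity $\int\psi\,d\varrho_{k+1}^\tau-\int\psi\,d\varrho_k^\tau=\int\nabla\psi\cdot\nabla\varphi_{k+1}\,d\varrho_{k+1}^\tau+O(\|D^2\psi\|_\infty W_2^2(\varrho_{k+1}^\tau,\varrho_k^\tau))$ gives
$$\int\psi\,d\varrho_{k+1}^\tau-\int\psi\,d\varrho_k^\tau=\tau\int(\Delta\psi-\nabla V\cdot\nabla\psi)\,d\varrho_{k+1}^\tau+O\bigl(\|D^2\psi\|_\infty W_2^2(\varrho_{k+1}^\tau,\varrho_k^\tau)\bigr).$$
Applying this with $\psi=\phi((k+1)\tau,\cdot)$ for a fixed $\phi\in C_c^\infty([0,T)\times\R^d)$, summing over $k$, performing a discrete integration by parts in time (using $\phi(T,\cdot)=0$ and $\phi((k+1)\tau,\cdot)-\phi(k\tau,\cdot)=\tau\,\partial_t\phi(k\tau,\cdot)+O(\tau^2)$) and rewriting the resulting Riemann sums as space--time integrals of the piecewise constant curve $\varrho^\tau$, we obtain
$$\int_0^T\!\int_{\R^d}\varrho^\tau\,\partial_t\phi\,dx\,dt+\int_{\R^d}\varrho_0\,\phi(0,\cdot)\,dx+\int_0^T\!\int_{\R^d}\varrho^\tau\,(\Delta\phi-\nabla V\cdot\nabla\phi)\,dx\,dt=o(1),$$
the error collecting the time-discretization remainders, of size $O(\tau)$, and $\|D^2\phi\|_\infty\sum_k W_2^2(\varrho_{k+1}^\tau,\varrho_k^\tau)=O(\tau)$.

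\emph{Passage to the limit and strong convergence.} Since $\varrho^\tau(t)\to\varrho(t)$ narrowly for every $t$, the masses are uniformly bounded, and $\phi$ and its derivatives are bounded with compact support, dominated convergence in $t$ passes the last display to the limit: $\varrho$ is a weak solution of the Fokker--Planck equation on $\R^d$ with datum $\varrho_0$, so by uniqueness (the parabolic regularity and uniqueness theory underlying Proposition \ref{th:reg.F-P}, in the version on $\R^d$ under the moment bound) the limit is independent of the subsequence, the whole family converges, and $\varrho\in C^\infty((0,\infty)\times\R^d)$. To upgrade to strong convergence in $L^1((0,T)\times\R^d)$ one needs one further estimate: De Giorgi's variational interpolation between consecutive steps (equivalently, a bound on the metric slope of $J$) yields $\int_0^T\!\int_{\R^d}\frac{|\nabla\varrho^\tau+\varrho^\tau\nabla V|^2}{\varrho^\tau}\,dx\,dt\le C$, which controls $\nabla\sqrt{\varrho^\tau}$ in $L^2$ on every ball; together with the $W_2$-equicontinuity in time (a surrogate time-derivative bound), an Aubin--Lions/Rellich argument gives strong $L^2$, hence strong $L^1$, convergence of $\varrho^\tau$ on $(0,T)\times B_R$ for each $R$, and the uniform second-moment bound promotes this to strong $L^1((0,T)\times\R^d)$. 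I expect the main obstacle to be exactly this last point: converting narrow/weak-$L^1$ convergence into \emph{strong} $L^1$ convergence requires the extra Fisher-information bound plus a genuine compactness argument, and the unbounded domain forces one to glue the local compactness with tightness; controlling the quadratic remainder $O(\|D^2\psi\|_\infty W_2^2)$ in the discrete weak formulation is the other delicate point, and it is precisely what the summed estimate $\sum_k W_2^2(\varrho_{k+1}^\tau,\varrho_k^\tau)=O(\tau)$ is there for.
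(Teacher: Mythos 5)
The paper does not actually prove Theorem \ref{th:JKO original}: it is quoted from \cite{JKO} and used as a black box, so there is no internal proof to compare against. Your sketch is a faithful reconstruction of the minimizing-movements argument of \cite{JKO}: the energy--dissipation inequality and the summed bound $\sum_k W_2^2(\varrho_{k+1}^\tau,\varrho_k^\tau)\le C\tau$, the first-variation computation along perturbations $(\mathrm{id}+s\zeta)_{\#}\varrho_{k+1}^\tau$, the discrete weak formulation with the $O(\|D^2\psi\|_\infty W_2^2(\varrho_{k+1}^\tau,\varrho_k^\tau))$ remainder, and the identification of the limit by uniqueness are exactly the steps of the original proof. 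Two caveats. First, as restated here the theorem assumes only $V\in C^\infty(\R^d)$; without the growth hypotheses of \cite{JKO} (a lower bound on $V$ and $|\nabla V(x)|\le C(1+|x|)$) the entropy lower bound via the Gaussian comparison, the propagation of second moments, and the bound on $\int\varrho^\tau|\nabla V|^2\,dx$ that your last step needs would all fail, so you are right to reinstate them, but they should be flagged as hypotheses rather than folded silently into the argument. Second, the claim of ``strong $L^2$'' convergence of $\varrho^\tau$ in the final step is not available (the initial datum is not assumed to be in $L^2$, so no $L^2$ bound on $\varrho^\tau$ exists); the correct statement is that the slope estimate bounds $\sqrt{\varrho^\tau}$ in $L^2([0,T];H^1(B_R))$, so that a compactness argument in the spirit of Aubin--Lions (applied to $\sqrt{\varrho^\tau}$, with the $W_2$-equicontinuity in time transferred to a negative-order norm, and with some care since the time integrability is only the borderline $q=1$ if one works with $\varrho^\tau$ itself) yields $\sqrt{\varrho^\tau}\to\sqrt{\varrho}$ strongly in $L^2_{loc}$, hence $\varrho^\tau\to\varrho$ in $L^1$ locally, and the uniform second-moment bound supplies the tightness needed to conclude globally. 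With these repairs the argument is the standard one and is correct.
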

 
In this paper we are instead interested in the case where $\Omega$ is an open bonded subset of $\mathbb{R}^d.$ The details of the JKO scheme for bounded domains are, for instance, given in \cite[Chapter 8]{OTAM}. In this case it is important to emphasize that the limit curve of  the JKO scheme not only  solves the Fokker-Planck equation but also satisfies the \textit{no-flux} boundary condition. More precisely, we summarize here the properties we need about the JKO scheme for bounded  $\Omega:$
\begin{theorem}\label{th:Properties}
Let  $[0,T]$ be a finite interval and  $\Omega$  be a bounded domain of $\mathbb{R}^d$ whose boundary $\partial \Omega$ is Lipschitz continuous. We assume $V$ is  Lipschitz continuous and   the initial data $\varrho_0 \in \mathcal{P}(\Omega)\cap L^1_+(\Omega)$  satisfies   $J(\varrho_0)< \infty.$ Then the following hold:
\begin{enumerate}
\item The functional $J$ has a unique minimum over $\mathcal{P}(\Omega).$ In particular $J$ is bounded from below. Moreover, for each $\tau>0,$ the sequence $\{\varrho_k^{\tau}\}_{k=0}^{N}$ defined by the formula (\ref{Sequence}) is well-defined (there is a unique minimizer at every step).
\item For any $k \in \{1,...,N\},$ the optimizer $\varrho_{k}^{\tau}$ is strictly positive, Lipschitz continuous, and satisfies
\begin{equation}\label{Optimality condition}
\log{\varrho_{k+1}^{\tau}}+V+\dfrac{\varphi_k}{\tau}=constant 
\end{equation}
where $\varphi_k$ is the Kantorovich potential from $\varrho_{k+1}^{\tau}$ to $\varrho_{k
}^{\tau}.$ 
\item For every $\tau>0,$ the sequence $\{\varrho_k^{\tau}\}_{k=0}^{N}$ satisfies
\begin{equation}\label{W_2 bound}
\displaystyle \sum_{k=0}^{N-1}\dfrac{{W}^2_2(\varrho_k^{\tau},\varrho_{k+1}^{\tau})}{\tau}\leq 2(J(\varrho_0)-\inf J).
\end{equation}
\item There exists a $\frac{1}{2}$-H\"older and absolutely  continuous curve $\varrho_t$ in $(\mathcal{P}(\Omega),{W}_2)$ such that ${W}_2(\varrho^{\tau}_{t}, \varrho_t)\rightarrow 0$  uniformly as $\tau\rightarrow 0.$ Moreover,  $\varrho_t  $ satisfies the Fokker-Planck equation (\ref{F-P}) in the distributional sense.
\end{enumerate}
 \end{theorem}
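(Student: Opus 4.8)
The plan is to establish the four items in turn; all of them are by now classical in the gradient-flow/JKO theory on bounded domains (cf.\ \cite[Chapter~8]{OTAM}, after \cite{JKO}), so I would only sketch the arguments and point out where boundedness of $\Omega$ is used. For item~1, since $\Omega$ is bounded the set $\mathcal{P}(\Omega)$ is compact for weak-$*$ convergence of measures; along this convergence the relative entropy $\mu\mapsto\int\frac{d\mu}{dx}\log\frac{d\mu}{dx}\,dx$ is lower semicontinuous and bounded from below (Jensen's inequality against the normalised Lebesgue measure), while $\mu\mapsto\int V\,d\mu$ is continuous because $V\in C(\bar\Omega)$; hence $J$ is l.s.c.\ and bounded below and attains its minimum on $\mathcal{P}(\Omega)$. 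Strict convexity of $J$ along linear interpolations $\mu_s=(1-s)\mu_0+s\mu_1$ (the $V$-term is affine and $r\mapsto r\log r$ is strictly convex) gives uniqueness of the minimiser. For the scheme, the same compactness together with lower semicontinuity of $\varrho\mapsto W_2^2(\varrho,\varrho^\tau_k)$ gives existence of a minimiser of $J(\varrho)+\frac{1}{2\tau}W_2^2(\varrho,\varrho^\tau_k)$ at each step; convexity of $\varrho\mapsto W_2^2(\varrho,\varrho^\tau_k)$ along linear interpolations (it is a supremum of affine functionals, by Kantorovich duality) together with strict convexity of $J$ forces uniqueness, so $\{\varrho^\tau_k\}$ is well-defined by induction.

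For item~2, I would first prove $\varrho^\tau_{k+1}>0$ a.e.\ by contradiction: if $A\colonequals\{\varrho^\tau_{k+1}=0\}$ had positive Lebesgue measure, take $\sigma\in\mathcal{P}(\Omega)$ with a smooth density bounded away from $0$ and $\infty$ and consider $\varrho_\varepsilon\colonequals(1-\varepsilon)\varrho^\tau_{k+1}+\varepsilon\sigma$; by convexity the difference quotients in $\varepsilon$ of the $V$-term and of $\frac{1}{2\tau}W_2^2(\cdot,\varrho^\tau_k)$ are bounded from above, whereas the one of the entropy tends to $-\infty$ (because $\frac1\varepsilon\int_A\varrho_\varepsilon\log\varrho_\varepsilon=\int_A\sigma\log\sigma+\log\varepsilon\int_A\sigma\to-\infty$), contradicting the minimality of $\varrho^\tau_{k+1}$. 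Once positivity is known, differentiating the JKO functional at $\varrho^\tau_{k+1}$ along perturbations $(1-\varepsilon)\varrho^\tau_{k+1}+\varepsilon\tilde\varrho$ and using that the first variation of $\varrho\mapsto\frac12 W_2^2(\varrho,\varrho^\tau_k)$ is the Kantorovich potential $\varphi_k$ from $\varrho^\tau_{k+1}$ to $\varrho^\tau_k$ yields $\log\varrho^\tau_{k+1}+1+V+\varphi_k/\tau=\mathrm{const}$ a.e., which is \eqref{Optimality condition}. Since on the bounded domain $\Omega$ the $\frac12|x-y|^2$-concave potential $\varphi_k$ is Lipschitz, and $V$ is Lipschitz, the identity $\varrho^\tau_{k+1}=\exp(\mathrm{const}-V-\varphi_k/\tau)$ shows at once that $\varrho^\tau_{k+1}$ is Lipschitz and bounded away from $0$ and $\infty$ (which also legitimates a posteriori the computation of the entropy's first variation).

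Item~3 is a one-line telescoping: using the admissible competitor $\varrho^\tau_k$ in the problem defining $\varrho^\tau_{k+1}$ gives $\frac{1}{2\tau}W_2^2(\varrho^\tau_{k+1},\varrho^\tau_k)\le J(\varrho^\tau_k)-J(\varrho^\tau_{k+1})$, and summing over $k=0,\dots,N-1$ gives $\sum_k\frac{1}{2\tau}W_2^2(\varrho^\tau_k,\varrho^\tau_{k+1})\le J(\varrho_0)-J(\varrho^\tau_N)\le J(\varrho_0)-\inf J$, i.e.\ \eqref{W_2 bound}. For item~4, \eqref{W_2 bound} and Cauchy--Schwarz give $W_2(\varrho^\tau_s,\varrho^\tau_t)\le C\sqrt{|t-s|+\tau}$ for $s<t$ with $C^2=2(J(\varrho_0)-\inf J)$, a modulus of continuity uniform in $\tau$; moreover the piecewise-geodesic interpolation $\tilde\varrho^\tau$ (constant-speed geodesic between $\varrho^\tau_k$ and $\varrho^\tau_{k+1}$ on each $[k\tau,(k+1)\tau]$) stays within $C\sqrt\tau$ of $\varrho^\tau$ in $W_2$ and satisfies $\int_0^T|(\tilde\varrho^\tau_t)'|^2\,dt=\sum_k\frac{W_2^2(\varrho^\tau_k,\varrho^\tau_{k+1})}{\tau}\le C^2$. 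A version of the Arzel\`a--Ascoli theorem for curves valued in the compact metric space $(\mathcal{P}(\Omega),W_2)$ then extracts a subsequence along which $\varrho^\tau$ and $\tilde\varrho^\tau$ converge uniformly to a curve $\varrho_t$ with $W_2(\varrho_s,\varrho_t)\le C\sqrt{|t-s|}$ (so $\tfrac12$-Hölder) and $\int_0^T|\varrho_t'|^2\,dt\le C^2$ by lower semicontinuity of the metric speed (so absolutely continuous).

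It remains to identify $\varrho_t$ as a distributional solution of \eqref{F-P}: for any $\xi\in C^\infty(\bar\Omega)$ I would write, using the optimal map $\mathrm{id}-\nabla\varphi_k$ from $\varrho^\tau_{k+1}$ to $\varrho^\tau_k$, a Taylor expansion of $\int\xi\,d\varrho^\tau_k-\int\xi\,d\varrho^\tau_{k+1}$, divide by $\tau$, invoke \eqref{Optimality condition} to replace $\nabla\varphi_k/\tau$ by $-\nabla(\log\varrho^\tau_{k+1}+V)$, and sum in $k$; the second-order remainders are controlled by \eqref{W_2 bound}, and passing to the limit $\tau\to0$ (using the uniform $W_2$-convergence together with the $L^1$-compactness inherited from the entropy bound) produces the distributional formulation of \eqref{F-P}, in which the no-flux condition is automatically encoded because this weak formulation generates no boundary term. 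Finally, uniqueness of the solution of \eqref{F-P} upgrades the convergence from a subsequence to the full family $\tau\to0$. The only genuinely delicate point is this last identification --- making the limit passage rigorous and controlling the Taylor remainders uniformly in $\tau$ by means of \eqref{W_2 bound} --- while everything else is soft (convexity, lower semicontinuity, weak-$*$ compactness of $\mathcal{P}(\Omega)$, and a telescoping sum).
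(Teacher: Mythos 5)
Your sketch is correct and follows exactly the standard route that the paper itself relies on: Theorem \ref{th:Properties} is stated without proof and simply refers to \cite[Chapter 8]{OTAM}, and your argument (weak-$*$ compactness plus convexity for well-posedness, the perturbation/first-variation argument for positivity and \eqref{Optimality condition}, the telescoping estimate for \eqref{W_2 bound}, and the refined Arzel\`a--Ascoli plus Taylor-expansion identification of the limit for item 4) is precisely the one given there. No discrepancy to report.
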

Some estimates in $L^p $ have been established in  \cite{F.DiM} when the domain $\Omega$ is convex.
 \begin{theorem}\label{th:L^p estimates}
 Let  $[0,T]$ be a finite interval and  $\Omega$  be a convex bounded domain of $\mathbb{R}^d$ whose boundary $\partial \Omega$ is Lipschitz continuous. We assume $V$ is  Lipschitz continuous and    the initial data $\varrho_0 \in \mathcal{P}(\Omega)\cap L^1_+(\Omega)$  satisfies   $J(\varrho_0)< \infty.$
 
 (i) Given $f:\R_+\to\R$ satisfying the $d$-McCann condition (i.e. $[0,\infty) \ni s \mapsto f(s^{-d})s^d$ is convex and decreasing), then we have 
 \begin{equation}\label{Flow interchange}
 \displaystyle \int_{\Omega} f(\varrho_k^{\tau})\,dx \geq \int_{\Omega}f(\varrho_{k+1}^{\tau})\,dx+\tau \int_{\Omega}\left(f''(\varrho_{k+1}^{\tau})|\nabla \varrho_{k+1}^{\tau}|^2+\varrho_{k+1}^{\tau} f''(\varrho_{k+1}^{\tau})\nabla \varrho_{k+1}^{\tau} \cdot \nabla V\right)\,dx.
 \end{equation}

 (ii) Suppose $\varrho_0 \in L^p(\Omega),$ with $p<\infty.$  Then, for any $k\in \{1,...,N\},$ we have 
 \begin{equation}\label{L^p estimate}
 \displaystyle \int_{\Omega} (\varrho_{k}^{\tau})^p dx \geq \left(1-\tau \frac{p(p-1)}{4} \text{Lip}(V)^2 \right)\int_{\Omega}(\varrho_{k+1}^{\tau})^p dx.
 \end{equation}
 
(iii) In particular, if $\tau$ is  small enough (depending on $V$ and $p$) and under the assumptions of $(i)$ and $(ii),$  the norm $||\varrho_{t}^{\tau}||_{L^p}$  grows at most exponentially in time for $p \in [1, \infty]$

\end{theorem}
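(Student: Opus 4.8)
The plan is to establish the three parts in order, the first being the substantial one. For (i) I would use the \emph{flow interchange} technique (Matthes--McCann--Savar\'e): fix a step $k$, set $\mu:=\varrho_{k+1}^\tau$ — which by Theorem~\ref{th:Properties} is Lipschitz and bounded below by a positive constant — and introduce the $W_2$-gradient flow $(\mathcal S_s)_{s\ge 0}$ in $\mathcal P(\Omega)$ of the auxiliary functional $\mathcal F(\varrho):=\int_\Omega f(\varrho)\,dx$. Concretely $u_s:=\mathcal S_s\mu$ solves the nonlinear diffusion equation $\partial_s u=\Delta P(u)$ with $P'(r)=rf''(r)$ and the no-flux condition $\nabla P(u)\cdot\vec n=0$ on $\partial\Omega$. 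The $d$-McCann condition on $f$ is precisely what makes $\mathcal F$ geodesically convex on $(\mathcal P(\Omega),W_2)$ — here the convexity of $\Omega$ is used — so that $(\mathcal S_s)$ is an $\mathrm{EVI}_0$ gradient flow. Testing the minimality of $\varrho_{k+1}^\tau$ in \eqref{Sequence} against the competitor $\mathcal S_s\varrho_{k+1}^\tau$, dividing by $s$ and letting $s\to 0^+$, the $\mathrm{EVI}$ turns the increment of the squared distances into $\mathcal F(\varrho_k^\tau)-\mathcal F(\varrho_{k+1}^\tau)$ in the limit, yielding the flow-interchange inequality
\[
\mathcal F(\varrho_k^\tau)-\mathcal F(\varrho_{k+1}^\tau)\ \ge\ \tau\,\limsup_{s\to 0^+}\frac{J(\varrho_{k+1}^\tau)-J(\mathcal S_s\varrho_{k+1}^\tau)}{s}.
\]

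Next I would identify the right-hand side with the claimed dissipation term. Since $\mu=\varrho_{k+1}^\tau$ is Lipschitz and bounded away from $0$, the equation $\partial_s u=\Delta P(u)$ is uniformly parabolic near $\mu$, so $u_s$ is smooth for small $s>0$, the $\limsup$ above is a genuine one-sided derivative, and it equals $-\tfrac{d}{ds}\big|_{s=0^+}J(u_s)$. Differentiating $J(u_s)=\int_\Omega\big(u_s\log u_s+u_sV\big)\,dx$ along the equation and integrating by parts — the boundary term $\int_{\partial\Omega}(\log u_s+1+V)\,\nabla P(u_s)\cdot\vec n\,d\mathcal H^{d-1}$ vanishing by the no-flux condition for $\mathcal S$ — gives
\[
-\frac{d}{ds}\Big|_{s=0^+}J(u_s)=\int_\Omega\nabla(\log\varrho_{k+1}^\tau+V)\cdot P'(\varrho_{k+1}^\tau)\,\nabla\varrho_{k+1}^\tau\,dx=\int_\Omega\Big(f''(\varrho_{k+1}^\tau)|\nabla\varrho_{k+1}^\tau|^2+\varrho_{k+1}^\tau f''(\varrho_{k+1}^\tau)\nabla\varrho_{k+1}^\tau\cdot\nabla V\Big)\,dx,
\]
which is exactly \eqref{Flow interchange}. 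The degenerate cases cause no trouble: if $f''$ vanishes ($f(r)=r$) the auxiliary flow is trivial and the inequality reduces to $1\ge 1$, while if $f''$ blows up at $0$ (as for $f(r)=r^p$, $1<p<2$) the positivity lower bound on $\varrho_{k+1}^\tau$ keeps everything bounded.

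For (ii) I would simply apply (i) with $f(r)=r^p$, $p\ge 1$: the map $s\mapsto f(s^{-d})s^d=s^{d(1-p)}$ is convex and non-increasing, so the $d$-McCann condition holds, and with $f''(r)=p(p-1)r^{p-2}$ inequality \eqref{Flow interchange} becomes
\[
\int_\Omega(\varrho_k^\tau)^p\,dx-\int_\Omega(\varrho_{k+1}^\tau)^p\,dx\ \ge\ \tau\,p(p-1)\int_\Omega\Big((\varrho_{k+1}^\tau)^{p-2}|\nabla\varrho_{k+1}^\tau|^2+(\varrho_{k+1}^\tau)^{p-1}\nabla\varrho_{k+1}^\tau\cdot\nabla V\Big)\,dx.
\]
Setting $a:=(\varrho_{k+1}^\tau)^{(p-2)/2}|\nabla\varrho_{k+1}^\tau|$ and bounding $|\nabla V|\le\mathrm{Lip}(V)$ a.e., the integrand is at least $a^2-a\,(\varrho_{k+1}^\tau)^{p/2}\mathrm{Lip}(V)\ge-\tfrac14(\varrho_{k+1}^\tau)^p\mathrm{Lip}(V)^2$ by the elementary bound $a^2-ab\ge -b^2/4$; integrating gives \eqref{L^p estimate}. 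For (iii), as soon as $\tau\,\tfrac{p(p-1)}{4}\mathrm{Lip}(V)^2<1$, \eqref{L^p estimate} rearranges to $\|\varrho_{k+1}^\tau\|_{L^p}^p\le\big(1-\tau\tfrac{p(p-1)}{4}\mathrm{Lip}(V)^2\big)^{-1}\|\varrho_k^\tau\|_{L^p}^p$; iterating over $k$ steps with $k\tau\le T$ produces $\|\varrho_t^\tau\|_{L^p}\le e^{Ct}\|\varrho_0\|_{L^p}$ with $C=C(p,\mathrm{Lip}(V))$, and the case $p=\infty$ is recovered by letting $p\to\infty$ (using $|\Omega|<\infty$). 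The main obstacle is the rigorous implementation of the flow interchange in (i): one must correctly set up the auxiliary nonlinear-diffusion gradient flow $(\mathcal S_s)$ on the convex domain $\Omega$ with no-flux conditions, verify that the $d$-McCann condition yields the $\mathrm{EVI}_0$ property needed to pass from the minimality of $\varrho_{k+1}^\tau$ to the flow-interchange inequality, and justify the differentiation-and-integration-by-parts computation of the dissipation — in particular that the boundary contributions vanish — which is where the Lipschitz regularity and strict positivity of $\varrho_{k+1}^\tau$ from Theorem~\ref{th:Properties} are essential. Parts (ii) and (iii) are then purely computational.
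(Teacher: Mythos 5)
The paper does not reprove this theorem: it imports it from the reference \cite{F.DiM}, and the flow-interchange route you describe (auxiliary $W_2$-gradient flow of $\mathcal F(\varrho)=\int f(\varrho)$, displacement convexity from the $d$-McCann condition on the convex domain, EVI$_0$ turning the minimality of $\varrho_{k+1}^\tau$ into the dissipation inequality, then Young's inequality with $f(r)=r^p$) is exactly the intended one — the inequality is even labelled ``Flow interchange'' in the statement, and the paper's own remark in the proof of Proposition \ref{p^t bounds} confirms that (ii) is (i) plus a Young inequality with better-chosen coefficients. Parts (i) and (ii) of your proposal are correct, including the identification of the dissipation $-\frac{d}{ds}J(u_s)=\int P'(u)\nabla u\cdot\nabla(\log u+V)$ with $P'(r)=rf''(r)$ and the bound $a^2-ab\ge -b^2/4$.

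The genuine gap is in (iii) for $p=\infty$. From (ii) you get, after iterating and taking $p$-th roots,
\begin{equation*}
\|\varrho^\tau_t\|_{L^p}\;\leq\;\Bigl(1-\tau\tfrac{p(p-1)}{4}\mathrm{Lip}(V)^2\Bigr)^{-t/(p\tau)}\|\varrho_0\|_{L^p}\;\approx\;e^{\frac{(p-1)}{4}\mathrm{Lip}(V)^2\,t}\,\|\varrho_0\|_{L^p},
\end{equation*}
so the exponential rate produced by this argument is of order $p\,\mathrm{Lip}(V)^2$ and diverges as $p\to\infty$; ``letting $p\to\infty$ using $|\Omega|<\infty$'' therefore yields nothing for $L^\infty$. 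The $L^\infty$ bound requires a separate argument, namely the maximum-principle reasoning on the optimality condition $\log\varrho^\tau_{k+1}+V+\varphi_k/\tau=\mathrm{const}$ (evaluating at the extremum of $\varphi_k$, where the displacement vanishes), which is the content of Lemma 2.4 of \cite{Max-Min} and is precisely what the paper invokes later in point 1 of Proposition \ref{propphik} to get the two-sided bounds $a\le\log\varrho^\tau_k+V\le b$. You should either cite that argument or reproduce it; the $p\to\infty$ limit of (ii) cannot replace it.
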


\section{Weak and strong convergence of the JKO scheme}\label{chap2}
In this section we use the bounds provided by  Theorem \ref{th:L^p estimates} on the solutions of the JKO scheme to improve its convergence to the solution of the Fokker-Planck equation up to strong convergence in $L^2([0,T]; H^1(\Omega)).$ As a starting point, we first consider weak convergence in the same space.
\begin{proposition}\label{p^t bounds}
Under the same assumptions of Theorem \ref{th:L^p estimates} and $\varrho_0 \in L^2(\Omega)$, the curve $\varrho^{\tau}_t$ is uniformly bounded with respect to $\tau$ in $L^{\infty}([0,T];L^2(\Omega))$ and $L^2([0,T]; H^1(\Omega)).$ Moreover, if $\varrho_t$ is the solution of (\ref{F-P}), then  $\varrho^{\tau}_t \rightharpoonup \varrho_t $  in $L^2([0,T]; H^1(\Omega)).$
\end{proposition}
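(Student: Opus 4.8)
The plan is to mimic, at the level of the JKO sequence $\{\varrho_k^\tau\}$, the continuous identity $\frac{d}{dt}\int_\Omega\tfrac12\varrho_t^2=-\int_\Omega|\nabla\varrho_t|^2-\int_\Omega\varrho_t\,\nabla\varrho_t\cdot\nabla V$ underlying Proposition~\ref{continuous L2 derivative}. To this end I would apply the flow-interchange inequality \eqref{Flow interchange} of Theorem~\ref{th:L^p estimates}(i) with $f(s)=\tfrac12 s^2$: this $f$ satisfies the $d$-McCann condition, since $s\mapsto f(s^{-d})s^d=\tfrac12 s^{-d}$ is convex and decreasing on $(0,\infty)$, and $f''\equiv1$. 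For each step $k$ this produces an inequality of the form $\tfrac12\|\varrho_{k+1}^\tau\|_{L^2}^2+\tau\|\nabla\varrho_{k+1}^\tau\|_{L^2}^2\le\tfrac12\|\varrho_k^\tau\|_{L^2}^2-\tau\int_\Omega\varrho_{k+1}^\tau\,\nabla\varrho_{k+1}^\tau\cdot\nabla V$.

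Next I would estimate the cross term by Young's inequality together with $|\nabla V|\le\text{Lip}(V)$, absorbing only \emph{half} of the Dirichlet energy $\tau\|\nabla\varrho_{k+1}^\tau\|_{L^2}^2$ into the left-hand side and leaving a remainder $\tfrac12\tau\,\text{Lip}(V)^2\|\varrho_{k+1}^\tau\|_{L^2}^2$. Summing the resulting per-step inequalities over $k=0,\dots,n-1$ (for $n\le N$) and discarding the nonnegative term $\tfrac12\|\varrho_n^\tau\|_{L^2}^2$ on the left yields $\tau\sum_{k=1}^{n}\|\nabla\varrho_k^\tau\|_{L^2}^2\le\|\varrho_0\|_{L^2}^2+\tau\,\text{Lip}(V)^2\sum_{k=1}^{n}\|\varrho_k^\tau\|_{L^2}^2$. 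To close, I would use the uniform-in-$k$-and-$\tau$ bound $\|\varrho_k^\tau\|_{L^2}^2\le C(T,\text{Lip}(V))\,\|\varrho_0\|_{L^2}^2$, which is Theorem~\ref{th:L^p estimates}(iii) for $p=2$; alternatively it can be re-derived on the spot by a discrete Gronwall argument applied to the same per-step inequality with the Dirichlet energy fully absorbed, i.e. from \eqref{L^p estimate} with $p=2$. Plugging this in yields at once the uniform $L^\infty([0,T];L^2(\Omega))$ bound and, since $\|\varrho^\tau\|_{L^2([0,T];H^1(\Omega))}^2=\tau\sum_{k=1}^N\|\varrho_k^\tau\|_{H^1(\Omega)}^2$ by the definition of the piecewise-constant interpolation, the uniform $L^2([0,T];H^1(\Omega))$ bound.

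For the weak convergence, $L^2([0,T];H^1(\Omega))$ being a Hilbert space, from any sequence $\tau_n\to0$ I can extract a further subsequence along which $\varrho^{\tau_n}\rightharpoonup\eta$ weakly in $L^2([0,T];H^1(\Omega))$ for some $\eta$; in particular $\varrho^{\tau_n}\to\eta$ in $\mathcal{D}'((0,T)\times\Omega)$. On the other hand, Theorem~\ref{th:Properties}(4) gives $W_2(\varrho_t^\tau,\varrho_t)\to0$ uniformly in $t$, where $\varrho_t$ solves \eqref{F-P}; testing against $\phi\in C_c^\infty((0,T)\times\Omega)$ and using $\big|\int_\Omega\phi(t,\cdot)\,d(\varrho_t^\tau-\varrho_t)\big|\le\text{Lip}\big(\phi(t,\cdot)\big)\,W_1(\varrho_t^\tau,\varrho_t)\le\text{Lip}\big(\phi(t,\cdot)\big)\,W_2(\varrho_t^\tau,\varrho_t)$ and integrating in $t$ shows $\varrho^\tau\to\varrho$ in $\mathcal{D}'((0,T)\times\Omega)$ as well. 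Hence $\eta=\varrho$ (in particular $\varrho\in L^2([0,T];H^1(\Omega))$, consistently with Proposition~\ref{th:reg.F-P}), and since the limit does not depend on the subsequence the whole family converges: $\varrho_t^\tau\rightharpoonup\varrho_t$ in $L^2([0,T];H^1(\Omega))$.

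The only genuinely delicate point is the Young-inequality bookkeeping in the second step: absorbing the \emph{entire} Dirichlet energy would merely reproduce the plain $L^p$ estimate and lose the gradient bound, so one must keep half of it and then rely on the already-available exponential-in-time control of $\|\varrho_t^\tau\|_{L^2}$ to handle the leftover $L^2$ sum; the extraction of the weak limit and its identification are then soft.
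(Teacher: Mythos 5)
Your proposal is correct and follows essentially the same route as the paper: the flow-interchange inequality \eqref{Flow interchange} with $f(s)=s^2$ (up to the harmless factor $\tfrac12$), Young's inequality absorbing only half of the Dirichlet energy, a telescoping sum combined with the uniform $L^2$ bound from \eqref{L^p estimate}, and then reflexivity plus identification of the weak limit with the Wasserstein limit. Your identification step via $W_1\le W_2$ and testing against smooth functions is just a slightly more explicit version of what the paper leaves implicit.
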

\begin{proof}
We use in the inequality (\ref{Flow interchange}) in Theorem \ref{th:L^p estimates} for the function $f(s)=s^2$. Then, for each $k\in \{0,,...,N-1\},$ $\varrho_{k+1}^{\tau}\in L^2(\Omega)   $ and  we have
\begin{equation}\label{eq:flow interchange}
\displaystyle \int_{\Omega} (\varrho_k^{\tau})^2\,dx \geq \int_{\Omega}(\varrho_{k+1}^{\tau})^2\,dx+2\tau \int_{\Omega}\left(|\nabla \varrho_{k+1}^{\tau}|^2+\varrho_{k+1}^{\tau} \nabla \varrho_{k+1}^{\tau} \cdot \nabla V\right)\,dx.
\end{equation}
By the Young's inequality we have
$$
\displaystyle  \int_{\Omega} \varrho_{k+1}^{\tau} \nabla \varrho_{k+1}^{\tau} \cdot \nabla V\,dx\geq -\frac 12 \int_{\Omega} |\nabla \varrho_{k+1}^{\tau}|^2-\frac12 \int_{\Omega} ( \varrho_{k+1}^{\tau})^2|\nabla V|^2\,dx $$
The estimate above implies 
$$
\displaystyle \int_{\Omega} (\varrho_{k}^{\tau})^2dx -\int_{\Omega} (\varrho_{k+1}^{\tau})^2dx+ \tau \,\text{Lip}(V)^2 \int_{\Omega}(\varrho_{k+1}^{\tau})^2dx \geq \tau \int_{\Omega}|\nabla \varrho_{k+1}^{\tau}|^2dx.$$
We sum the inequalities above with respect to $k,$ then 
\begin{equation}\label{int-est}
\displaystyle \int_{\Omega} (\varrho_{0}^{\tau})^2dx -\int_{\Omega} (\varrho_{N}^{\tau})^2dx+ \text{Lip}(V)^2 \sum_{k=0}^{N-1} \tau \int_{\Omega}(\varrho_{k+1}^{\tau})^2dx \geq \sum_{k=0}^{N-1}\tau \int_{\Omega}|\nabla \varrho_{k+1}^{\tau}|^2dx.
\end{equation}
By the definition of the curve $\varrho^{\tau}_t, $ we have
$$
\displaystyle ||\varrho^{\tau}||^2_{L^2([0,T];L^2(\Omega))}=\sum_{k=0}^{N-1} \tau \int_{\Omega}(\varrho_{k+1}^{\tau})^2dx
$$ 
and
  $$
  \displaystyle ||\varrho^{\tau}||_{L^2([0,T];H^1(\Omega))}^2=\sum_{k=0}^{N-1} \left(\tau \int_{\Omega}(\varrho_{k+1}^{\tau})^2\,dx+\tau \int_{\Omega}|\nabla \varrho_{k+1}^{\tau}|^2dx \right).
  $$
  The inequality (\ref{L^p estimate}) in Theorem \ref{th:L^p estimates} (which is actually proven exactly as in the computations above, choosing better coefficients in the Young inequality so that the $H^1$ part disappears), provides uniform bounds on $||\varrho^{\tau}_t||_{L^2(\Omega)}$, which guarantees the bound in $L^{\infty}([0,T];L^2(\Omega))$ and, using \eqref{int-est}, also in $L^2([0,T]; H^1(\Omega)).$ 
  
  Since $L^2([0,T]; H^1(\Omega))$ is reflexive, when $\tau\to 0$ it is easy to find a weak limit $\rho \in L^2([0,T]; H^1(\Omega))$ and this limit necessarily coincides with the limit in the Wasserstein sense, i.e. the unique solution of (\ref{F-P}), which proves the last part of the statement.
  \end{proof} 
  
  We now want to start proving strong convergences of $\varrho^\tau$ to $\varrho$. A first step will make use of the well-known Aubin-Lions lemma for time-dependent functions valued into functional spaces, but we need to handle the time derivative. As the functions $t\mapsto \varrho^\tau$ are discontinuous, for simplicity (instead of evoking modified versions of the Aubin-Lions compactness criterion using functions which are BV in time), we  define a new family of interpolations which help in obtaining the desired result. 
  
Given $\varepsilon \in (0,1),$ we consider  another curve $\varrho^{\tau,\varepsilon}_{t}$ such that $\varrho^{\tau,\varepsilon}_{0}=\varrho_0$ and for $t\in (0,T]$
$$
\varrho^{\tau,\varepsilon}_{t}=\begin{cases} \varrho^{\tau}_{k+1} &\text{ if } t\in (k\tau,k\tau+(1-\varepsilon)\tau] \text{ and } k \in \{0,...,N-2\} \\
\varrho^{\tau}_{k+1}\frac{(k+1)\tau-t}{\varepsilon \tau}+\varrho^{\tau}_{k+2}\frac{t-(k+1)\tau+\varepsilon \tau}{\varepsilon \tau} &\text{ if } t\in (k\tau+(1-\varepsilon)\tau,(k+1)\tau] \text{ and } k \in \{0,...,N-2\}\\
\varrho_N^{\tau} & \text{ if } t\in ((N-1)\tau, N \tau] 
\end{cases}.
$$
This curve is Lipschitz continuous in space and time and it time-derivative equals
 \begin{equation}\label{distributional time deriv}
 \displaystyle \partial_{t}\varrho^{\tau, \varepsilon}_{t}=\sum_{k=0}^{N-2}\dfrac{\varrho^{\tau}_{k+2}-\varrho^{\tau}_{k+1}}{\varepsilon \tau} \cdot \mathbf{1}_{(k\tau+(1-\varepsilon)\tau, (k+1)\tau)} \text{     a.e.    } t\in [0,T].
 \end{equation}
 Let $\mathcal{X}:=\{\psi \in \text{Lip}(\Omega): \int_\Omega\psi=0 \}$ and $||\cdot||_{\text{Lip}}$ be the Lipschitz semi-norm, which is actually a norm on this space where the average is prescribed, then it is easy to check that $(\mathcal{X},||\cdot||_{\text{Lip}})$ forms a Banach space. We denote by $\mathcal{X}'$ the dual space of $\mathcal{X}$. This space includes all measures on $\Omega$ and it is well-known that we have $W_1(\mu,\nu)=||\mu-\nu||_{\mathcal X'}$ for every $\mu,\nu\in \mathcal P(\Omega)$.
 \begin{proposition}\label{p^tau,eps}
Under the same assumptions of Theorem \ref{th:L^p estimates} and $\varrho_0 \in L^2(\Omega)$, the following facts hold:
\begin{enumerate}

\item the curve $\varrho^{\tau,\varepsilon}_{t}$ converges to the solution of the Fokker-Planck equation uniformly in ${W}_2$ distance when $\tau\to 0$ (i.e. for every sequence $\varepsilon_j\in (0,1) $ and $\tau_j\to 0$ we do have this convergence).

\item  the curve $\varrho^{\tau,\varepsilon}_{t}$ is uniformly bounded in $L^{\infty}([0,T];L^2(\Omega))$ and $L^{2}([0,T];H^1(\Omega))$ with respect to $\varepsilon$  and $\tau.$

\item  we have $\displaystyle ||\varrho^{\tau} -\varrho^{\tau, \varepsilon}||_{L^{2}([0,T]; H^1(\Omega))}\leq C\sqrt{\varepsilon}$ for a constant $C$ independent of $\tau$ and $\varepsilon$.

\item $\partial_{t}\varrho^{\tau,\varepsilon}_{t}$ is uniformly bounded in $L^1([0,T];\mathcal{X}')$ with respect to $\varepsilon$  and $\tau.$
\end{enumerate}
\end{proposition}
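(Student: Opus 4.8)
The plan is to exploit systematically that, on each time interval, $\varrho^{\tau,\varepsilon}_t$ equals either one of the iterates $\varrho^\tau_{k+1}$ or a convex combination $\lambda\,\varrho^\tau_{k+1}+(1-\lambda)\,\varrho^\tau_{k+2}$ with $\lambda=\lambda(t)\in[0,1]$, and to feed this structure into the uniform estimates already available: the squared-distance bound \eqref{W_2 bound} and the $\tfrac12$-H\"older continuity of $\varrho^\tau$ from Theorem~\ref{th:Properties}, the $L^p$ growth of Theorem~\ref{th:L^p estimates}, and the uniform $L^\infty_tL^2_x\cap L^2_tH^1_x$ bound of Proposition~\ref{p^t bounds}. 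Recall also that each $\varrho^\tau_k$ is Lipschitz, hence in $H^1(\Omega)$, so all the quantities below make sense.

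For item (2), I would use the convexity of the functionals $u\mapsto\int_\Omega u^2$ and $u\mapsto\int_\Omega|\nabla u|^2$: on the interpolation intervals this gives $\|\varrho^{\tau,\varepsilon}_t\|_{L^2}^2\le\lambda\|\varrho^\tau_{k+1}\|_{L^2}^2+(1-\lambda)\|\varrho^\tau_{k+2}\|_{L^2}^2$ and the analogous inequality for $\|\nabla\,\cdot\,\|_{L^2}^2$. Hence $\|\varrho^{\tau,\varepsilon}_t\|_{L^2}\le\max_k\|\varrho^\tau_k\|_{L^2}$ pointwise in $t$, which is uniformly bounded by Theorem~\ref{th:L^p estimates}(iii) (or Proposition~\ref{p^t bounds}), giving the $L^\infty_tL^2_x$ bound; integrating the $H^1$-inequality over each short interval of length $\varepsilon\tau$ and summing in $k$, the contribution of those intervals is bounded by a fixed multiple of $\|\varrho^\tau\|^2_{L^2([0,T];H^1(\Omega))}$, so that $\|\varrho^{\tau,\varepsilon}\|^2_{L^2([0,T];H^1(\Omega))}\le C\,\|\varrho^\tau\|^2_{L^2([0,T];H^1(\Omega))}$, uniformly bounded by Proposition~\ref{p^t bounds}. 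Item (3) is the quantitative version of the same computation: on $(k\tau+(1-\varepsilon)\tau,(k+1)\tau)$ one has $\varrho^{\tau,\varepsilon}_t-\varrho^\tau_t=c(t)\,(\varrho^\tau_{k+2}-\varrho^\tau_{k+1})$ with $c(t)\in[0,1]$, so $\|\varrho^{\tau,\varepsilon}_t-\varrho^\tau_t\|_{H^1}^2\le 2\big(\|\varrho^\tau_{k+1}\|_{H^1}^2+\|\varrho^\tau_{k+2}\|_{H^1}^2\big)$; integrating over that interval (of length $\varepsilon\tau$) and summing over $k$ yields $\|\varrho^\tau-\varrho^{\tau,\varepsilon}\|^2_{L^2([0,T];H^1(\Omega))}\le 4\varepsilon\,\|\varrho^\tau\|^2_{L^2([0,T];H^1(\Omega))}$, which is the claim with a constant controlled, via Proposition~\ref{p^t bounds}, independently of $\tau$ and $\varepsilon$.

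For item (4), I would start from the explicit formula \eqref{distributional time deriv}. Since $\|\,\cdot\,\|_{\mathcal X'}$ coincides with $W_1$ on differences of probability measures and the indicators $\mathbf 1_{(k\tau+(1-\varepsilon)\tau,(k+1)\tau)}$ are disjointly supported with mass $\varepsilon\tau$, one gets $\int_0^T\|\partial_t\varrho^{\tau,\varepsilon}_t\|_{\mathcal X'}\,dt=\sum_{k=0}^{N-2}W_1(\varrho^\tau_{k+1},\varrho^\tau_{k+2})\le\sum_{k=0}^{N-2}W_2(\varrho^\tau_{k+1},\varrho^\tau_{k+2})$. By the Cauchy--Schwarz inequality this is at most $\big(\sum_k\tau\big)^{1/2}\big(\sum_k\tau^{-1}W_2^2(\varrho^\tau_{k+1},\varrho^\tau_{k+2})\big)^{1/2}\le\sqrt{2T\,(J(\varrho_0)-\inf J)}$ by \eqref{W_2 bound}, a bound independent of $\tau$ and $\varepsilon$.

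Finally, for item (1) the key is that $\varrho^{\tau,\varepsilon}_t$ stays uniformly close to $\varrho^\tau_t$: on the interpolation intervals, writing the difference as in item (3) and using that $W_1(\,\cdot\,,\,\cdot\,)=\|\,\cdot-\cdot\,\|_{\mathcal X'}$ is a norm on signed measures, $W_1(\varrho^{\tau,\varepsilon}_t,\varrho^\tau_t)\le W_1(\varrho^\tau_{k+1},\varrho^\tau_{k+2})\le W_2(\varrho^\tau_{k+1},\varrho^\tau_{k+2})$, and directly from \eqref{W_2 bound} (equivalently, the $\tfrac12$-H\"older estimate of Theorem~\ref{th:Properties}) this is $\le\sqrt{2(J(\varrho_0)-\inf J)}\,\sqrt\tau$; since $\Omega$ is bounded, $W_2^2(\varrho^{\tau,\varepsilon}_t,\varrho^\tau_t)\le\mathrm{diam}(\Omega)\,W_1(\varrho^{\tau,\varepsilon}_t,\varrho^\tau_t)\le C\sqrt\tau$, uniformly in $t$. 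Combining this with the uniform convergence $W_2(\varrho^\tau_t,\varrho_t)\to 0$ of Theorem~\ref{th:Properties} through the triangle inequality proves item (1). The only mildly delicate point throughout is this passage from $W_1$ to $W_2$ control: because $\varrho^{\tau,\varepsilon}$ interpolates densities linearly rather than along Wasserstein geodesics, one cannot invoke geodesic convexity of $W_2$ directly and must instead pay the factor $\mathrm{diam}(\Omega)$; everything else reduces to convexity of the $L^2$ and $\dot H^1$ functionals together with the previously established uniform estimates.
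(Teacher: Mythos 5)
Your proof is correct and, for items (2), (3) and (4), follows essentially the same route as the paper: convexity of the $L^2$ and $\dot H^1$ energies for the uniform bounds, the explicit $O(\sqrt\varepsilon)$ computation on the short interpolation intervals (the paper does the exact integral and gets a factor $\varepsilon/3$ where you use the cruder $c(t)\le 1$, but both yield $C\sqrt\varepsilon$), and the $W_1\le W_2$ identification plus Cauchy--Schwarz against the bound \eqref{W_2 bound} for the time derivative. The one genuine divergence is item (1): the paper controls $W_2(\varrho^\tau_t,\varrho^{\tau,\varepsilon}_t)$ directly by $\sup_k W_2(\varrho^\tau_k,\varrho^\tau_{k+1})\le C\sqrt\tau$ using the convexity of $\mu\mapsto W_2^2(\mu,\nu)$ \emph{under linear interpolation of measures} (a standard fact obtained by mixing transport plans; no geodesic convexity is needed, so your stated obstruction does not actually arise), whereas you pass through $W_2^2\le\mathrm{diam}(\Omega)\,W_1$ and only get $W_2(\varrho^\tau_t,\varrho^{\tau,\varepsilon}_t)\le C\tau^{1/4}$. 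Your bound is weaker in rate but entirely sufficient for the uniform convergence claimed in item (1), so the argument stands; the paper's version simply preserves the $\sqrt\tau$ rate.
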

\begin{proof}
Using the convexity of $\mu\mapsto W_2^2(\mu,\nu)$ it is easy to check that we have
\begin{equation}
{W}_2(\varrho^{\tau}_{t}, \varrho^{\tau,\varepsilon}_{t})\leq \sup_k W_2(\varrho^\tau_k,\varrho^\tau_{k+1})\leq \sqrt{\tau} \cdot \sqrt{2(J(\varrho_0)-\inf J)}.
\end{equation}
This shows that, as $\tau\rightarrow 0,$ the curves  $\varrho^{\tau}_{t}$ and $\varrho^{\tau,\varepsilon}_{t}$ tend to the same limit (which is  the solution of the Fokker-Planck equation), independently of $\varepsilon$.\\
 Since the curve $\varrho^{\tau}_t$ is uniformly bounded with respect to $\tau$ in $L^{\infty}([0,T];L^2(\Omega))$ by $C(T) ||\varrho_{0}||_{L^2(\Omega)}$, and $\varrho^{\tau,\varepsilon}$ is obtained by convex combinations of values of $\varrho^\tau_t$, the same bound is also true for $\varrho^{\tau,\varepsilon}$. 
 Moreover, we have
\begin{equation}\label{L^2 dist p^tau and p^tauep}
 ||\varrho^{\tau} -\varrho^{\tau, \varepsilon}||_{L^{2}([0,T]; L^2(\Omega))}=\sqrt{\frac{\varepsilon}{3}\sum_{k=0}^{N-2}\tau \int_{\Omega}(\varrho^{\tau}_{k+2}-\varrho^{\tau}_{k+1})^2 dx} \leq C(T)\sqrt{\varepsilon}\cdot ||\varrho_{0}||_{L^2(\Omega)}.
\end{equation}
Similar computations show
\begin{eqnarray*}
 ||\varrho^{\tau} -\varrho^{\tau, \varepsilon}||_{L^{2}([0,T]; H^1(\Omega))}&=&\sqrt{\frac{\varepsilon}{3}\sum_{k=0}^{N-2}\tau ||\varrho^{\tau}_{k+2}-\varrho^{\tau}_{k+1}||^2_{H^1}}\\
 & \leq& C\sqrt{\varepsilon}\sqrt{\sum_{k=0}^{N-2}\tau||\varrho^{\tau}_{k+2}||^2_{H^1}+\tau||\varrho^{\tau}_{k+1}||^2_{H^1}}\leq C\sqrt{\varepsilon}||\varrho^{\tau}||
 _{L^2([0,T];H^1(\Omega))}.
 \end{eqnarray*}
By Proposition \ref{p^t bounds} and the last two estimates we have $\displaystyle ||\varrho^{\tau} -\varrho^{\tau, \varepsilon}||_{L^{2}([0,T]; H^1(\Omega))}\rightarrow 0$  as $\varepsilon \rightarrow 0,$ uniformly in $\tau$.
We now look at the time derivative and we use the inequality  ${W}_1\leq W_2$ together with the identification of the distance $W_1$ as the dual norm of $\mathcal X$. We then have  \begin{equation}
\displaystyle \sum_{k=0}^{N-1} \frac{1}{\tau} ||\varrho_k^{\tau}-\varrho_{k+1}^{\tau}||^2_{\mathcal{X}'}\leq 2(J(\varrho_0)-\inf J).
\end{equation}
By the Cauchy–Schwarz inequality
\begin{equation}\label{Lip bound}
\displaystyle \sum_{k=0}^{N-1}  ||\varrho_k^{\tau}-\varrho_{k+1}^{\tau}||_{\mathcal{X}'}\leq \sqrt{N \displaystyle \sum_{k=0}^{N-1} ||\varrho_k^{\tau}-\varrho_{k+1}^{\tau}||^2_{\mathcal{X}'}}=\sqrt{ \frac{T}{\tau}\displaystyle \sum_{k=0}^{N-1}  ||\varrho_k^{\tau}-\varrho_{k+1}^{\tau}||^2_{\mathcal{X}'}}\leq \sqrt{2T (J(\varrho_0)-\inf J)}.
\end{equation}
The estimate (\ref{Lip bound}) and the expression (\ref{distributional time deriv}) we obtain
\begin{equation}
||\partial_{t}\varrho^{\tau,\varepsilon}||_{L^1([0,T];\mathcal{X}')}= \sum_{k=0}^{N-2} \varepsilon \tau||(\varrho^{\tau}_{k+2}-\varrho^{\tau}_{k+1})/\varepsilon \tau||_{\mathcal{X}'}\leq \sum_{k=0}^{N-1}  ||\varrho_k^{\tau}-\varrho_{k+1}^{\tau}||_{\mathcal{X}'}\leq  \sqrt{2T (J(\varrho_0)-\inf J)}. 
\end{equation}
The estimate above shows that $\partial_{t}\varrho^{\tau,\varepsilon}_{t}$ is uniformly bounded in $L^1([0,T];\mathcal{X}').$
\end{proof}
We  state here a version of the Aubin-Lions-Simon Compactness Theorem  which gives a compactness criterion in $L^p([0,T];B)$ for a Banach space $B.$ Its proof can be found, for instance, in  \cite[Corollary 6, page 87]{Simon}.
\begin{theorem}\label{Simon}
Let $X, B$ and $Y$ be Banach spaces such that $X\subset B \subset Y$  with compact embedding $X\hookrightarrow B.$ Let $E$ be a bounded set in $L^q([0,T];B)\cap L^1_{loc}([0,T];X)$  for $1<q\leq \infty.$  If $\frac{\partial E}{\partial t}:=\left\{ \frac{\partial f}{\partial t}: f \in E \right\}$ is bounded in $L^1_{loc}([0,T];Y),$ then $E$ is relatively compact in $L^p([0,T]; B)$  for all $p<q.$
\end{theorem}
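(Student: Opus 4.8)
The plan is to derive this from three classical ingredients — an Ehrling-type interpolation inequality, uniform equicontinuity of time-translations on compact subintervals of $(0,T)$, and an Arzel\`a--Ascoli argument applied to time-mollifications — and essentially all the work consists in arranging these so that the restriction $p<q$ is exploited in the two places where it is needed.

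First I would prove the interpolation inequality: since $X\hookrightarrow B$ is compact and $B\hookrightarrow Y$ is continuous, for every $\eta>0$ there is $C_\eta$ with $\|v\|_B\le\eta\|v\|_X+C_\eta\|v\|_Y$ for all $v\in X$; this is the usual contradiction argument (normalize $\|v_n\|_B=1$, obtain $\|v_n\|_X$ bounded and $\|v_n\|_Y\to0$, extract a $B$-limit by compactness, and observe it must vanish in $Y$, hence in $B$). Next, fixing $0<a<b<T$, I would show that $\sup_{f\in E}\|f(\cdot+h)-f\|_{L^p([a,b-h];B)}\to0$ as $h\to0^+$. From $\|f(t+h)-f(t)\|_Y\le\int_t^{t+h}\|\partial_s f(s)\|_Y\,ds$, integrating in $t$ and using the uniform bound on $\partial_t f$ in $L^1_{loc}([0,T];Y)$ gives $\|f(\cdot+h)-f\|_{L^1([a,b-h];Y)}\le h\,C$ uniformly over $E$, while boundedness of $E$ in $L^1_{loc}([0,T];X)$ gives $\|f(\cdot+h)-f\|_{L^1([a,b-h];X)}\le 2C$. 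Inserting these into the interpolation inequality yields $\|f(\cdot+h)-f\|_{L^1([a,b-h];B)}\le 2C\eta+C_\eta C h$, which is arbitrarily small uniformly in $f$ upon choosing first $\eta$, then $h$. Finally I would interpolate this $L^1$-in-time bound against the uniform $L^q([0,T];B)$ bound on $f(\cdot+h)-f$ to reach the $L^p$-in-time norm (using $\tfrac1p=\theta+\tfrac{1-\theta}{q}$ for a suitable $\theta\in(0,1)$ when $q<\infty$, and $L^1$-versus-$L^\infty$ interpolation when $q=\infty$); this is legitimate precisely because $p<q$.

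With uniform equicontinuity of translates established, I would pick $[a,b]\subset(a',b')\subset(0,T)$, extend each $f\in E$ off $[a',b']$, and mollify in time: $f_\delta=f*\rho_\delta$ with $\mathrm{supp}\,\rho_\delta\subset(-\delta,\delta)$ and $\delta<\dist([a,b],\{a',b'\})$. For fixed $\delta$ the family $\{f_\delta:f\in E\}$ is relatively compact in $C([a,b];B)$ by Arzel\`a--Ascoli: each $f_\delta(t)$ lies in $X$ with $\|f_\delta(t)\|_X\le\|\rho_\delta\|_\infty\|f\|_{L^1([a',b'];X)}$, so its values range in a fixed compact subset of $B$, and $\|f_\delta(t)-f_\delta(t')\|_B\le\mathrm{Lip}(\rho_\delta)\,|t-t'|\,\|f\|_{L^1([a',b'];B)}$ gives equicontinuity. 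Since $\|f-f_\delta\|_{L^p([a,b];B)}\le\sup_{|h|\le\delta}\|f(\cdot+h)-f\|_{L^p([a',b'];B)}\to0$ uniformly over $E$ as $\delta\to0$, the set $\{f|_{[a,b]}:f\in E\}$ is totally bounded in $L^p([a,b];B)$. It then remains to absorb the endpoints: since $p<q$, $\|f\|_{L^p([0,a]\cup[b,T];B)}\le(a+T-b)^{1/p-1/q}\|f\|_{L^q([0,T];B)}$ is small uniformly over $E$ once $a$ and $T-b$ are small, so $E$ itself is totally bounded, hence relatively compact, in $L^p([0,T];B)$.

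The main obstacle is the equicontinuity step: the hypotheses only give $L^1_{loc}$-in-time control of $f$ in $X$ and of $\partial_t f$ in $Y$, which by themselves produce equicontinuity of translates merely in the $L^1$-in-time norm; upgrading this to the $L^p$-in-time norm forces the interpolation against the $L^q([0,T];B)$ bound and hence the hypothesis $p<q$, which is the very same hypothesis that makes the endpoint contributions vanish.
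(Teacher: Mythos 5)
Your proof is correct. The paper itself does not prove this statement --- it is quoted verbatim from Simon's compactness paper and justified only by the citation \cite[Corollary 6, page 87]{Simon} --- and your argument is essentially a self-contained reconstruction of Simon's own proof: the Ehrling-type lemma, the $L^1$-in-time translate estimate obtained from the $L^1_{loc}(Y)$ bound on $\partial_t f$, its upgrade to $L^p$ by interpolation against the $L^q(B)$ bound, mollification plus Arzel\`a--Ascoli on compact subintervals, and absorption of the endpoints via H\"older, with the hypothesis $p<q$ entering exactly at the two places you identify.
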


\begin{corollary}\label{cor:L^2-L^2 conver}
Under the same assumptions of Theorem \ref{th:L^p estimates} and supposing $\varrho_0 \in L^2(\Omega)$, then the curves $\varrho^{\tau}_{t}$ and $\varrho^{\tau,\varepsilon}_{t}$  converge strongly to the solution of the Fokker-Planck equation in $L^2([0,T];L^2(\Omega))$ as $\tau \rightarrow 0$. 
\end{corollary}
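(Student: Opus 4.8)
The plan is to obtain strong compactness of the regularized curves $\varrho^{\tau,\varepsilon}$ by the Aubin--Lions--Simon theorem (Theorem \ref{Simon}) and then to transfer the conclusion to $\varrho^{\tau}$ via the estimate \eqref{L^2 dist p^tau and p^tauep}. First I would fix the functional triple $X=H^1(\Omega)$, $B=L^2(\Omega)$, $Y=\mathcal{X}'$. The embedding $H^1(\Omega)\hookrightarrow L^2(\Omega)$ is compact by the Rellich--Kondrachov theorem, since $\Omega$ is bounded with Lipschitz boundary; the embedding $L^2(\Omega)\hookrightarrow\mathcal{X}'$ is continuous, because every $\psi\in\mathcal{X}$ has zero mean and hence vanishes somewhere on $\bar\Omega$, which gives $\|\psi\|_{L^{\infty}}\le\mathrm{diam}(\Omega)\,\|\psi\|_{\mathrm{Lip}}$ and therefore $\left|\int_{\Omega}f\psi\right|\le C\|f\|_{L^2(\Omega)}\|\psi\|_{\mathrm{Lip}}$. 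By Proposition \ref{p^tau,eps} the family $\{\varrho^{\tau,\varepsilon}\}_{\tau,\varepsilon}$ is bounded in $L^{\infty}([0,T];L^2(\Omega))\cap L^2([0,T];H^1(\Omega))$ and $\{\partial_t\varrho^{\tau,\varepsilon}\}_{\tau,\varepsilon}$ is bounded in $L^1([0,T];\mathcal{X}')$. Applying Theorem \ref{Simon} with $q=\infty$ gives that $\{\varrho^{\tau,\varepsilon}\}$ is relatively compact in $L^p([0,T];L^2(\Omega))$ for every $p<\infty$, in particular in $L^2([0,T];L^2(\Omega))$.

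Next I would identify the limit. Given any $\tau_j\to0$ and any $\varepsilon_j\in(0,1)$, the compactness above yields a subsequence along which $\varrho^{\tau_j,\varepsilon_j}$ converges strongly in $L^2([0,T];L^2(\Omega))$ to some $g$, and, passing to a further subsequence, a.e.\ in $(t,x)$. On the other hand, Proposition \ref{p^tau,eps}(1) gives that $\varrho^{\tau_j,\varepsilon_j}(t)\to\varrho(t)$ uniformly in $W_2$, where $\varrho$ is the solution of \eqref{F-P}; in particular $\varrho^{\tau_j,\varepsilon_j}(t)$ converges narrowly for every $t$, and comparison of the two convergences forces $g=\varrho$. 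Since the limit is independent of the subsequence, the full sequence $\varrho^{\tau_j,\varepsilon_j}$ converges to $\varrho$ in $L^2([0,T];L^2(\Omega))$; as the sequence was arbitrary, $\varrho^{\tau,\varepsilon}\to\varrho$ in $L^2([0,T];L^2(\Omega))$ as $\tau\to0$, uniformly in $\varepsilon\in(0,1)$ (otherwise a contradicting sequence would violate what was just proven).

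Finally, for $\varrho^{\tau}$ I would use the triangle inequality and \eqref{L^2 dist p^tau and p^tauep}: for every $\varepsilon\in(0,1)$,
$$\|\varrho^{\tau}-\varrho\|_{L^2([0,T];L^2(\Omega))}\le\|\varrho^{\tau}-\varrho^{\tau,\varepsilon}\|_{L^2([0,T];L^2(\Omega))}+\|\varrho^{\tau,\varepsilon}-\varrho\|_{L^2([0,T];L^2(\Omega))}\le C(T)\sqrt{\varepsilon}\,\|\varrho_0\|_{L^2(\Omega)}+\sup_{\varepsilon'\in(0,1)}\|\varrho^{\tau,\varepsilon'}-\varrho\|_{L^2([0,T];L^2(\Omega))}.$$
Since the left-hand side does not depend on $\varepsilon$, letting $\varepsilon\to0$ gives $\|\varrho^{\tau}-\varrho\|_{L^2([0,T];L^2(\Omega))}\le\sup_{\varepsilon'\in(0,1)}\|\varrho^{\tau,\varepsilon'}-\varrho\|_{L^2([0,T];L^2(\Omega))}$, which tends to $0$ as $\tau\to0$ by the second step.

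I expect the main obstacle to be the identification of the limit in the second step --- making sure that every strong $L^2$-in-space-time cluster point of the regularized curves coincides with the Wasserstein limit $\varrho$, which is exactly where Proposition \ref{p^tau,eps}(1) is needed --- together with the structural fact that the Aubin--Lions machinery cannot be applied directly to the piecewise-constant, hence time-discontinuous, curve $\varrho^{\tau}$, which is the very reason the interpolation $\varrho^{\tau,\varepsilon}$ was introduced; checking the two embeddings is routine.
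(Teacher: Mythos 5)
Your proposal is correct and follows essentially the same route as the paper: apply the Aubin--Lions--Simon theorem with the triple $H^1(\Omega)\subset L^2(\Omega)\subset\mathcal{X}'$ to the interpolated curves $\varrho^{\tau,\varepsilon}$, identify the strong $L^2_tL^2_x$ cluster point with the Wasserstein limit $\varrho$, and then transfer the convergence to $\varrho^{\tau}$ via the $O(\sqrt{\varepsilon})$ estimate \eqref{L^2 dist p^tau and p^tauep} and the triangle inequality. Your extra care in verifying the two embeddings and in making the uniformity in $\varepsilon$ explicit is a minor refinement of the same argument.
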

\begin{proof}
Let us consider Theorem \ref{Simon} for  the Banach spaces $X=H^1(\Omega), B=L^2(\Omega), Y=\mathcal{X}'$ and the indexes $p=2, q=+\infty.$ By the result of Proposition \ref{p^tau,eps}, for each $\varepsilon \in (0,1),$ the family of curves $E:=(\varrho^{\tau,\varepsilon}_{t})_{\tau>0}$  satisfies Theorem \ref{Simon}, thus it is relatively compact in $L^2([0,T];L^2(\Omega)).$ By the first part of Proposition \ref{p^tau,eps}, the curve $\varrho^{\tau,\varepsilon}_{t}$ converges to the unique solution of (\ref{F-P}) in ${W}_2$ distance. Both the strong convergence in $L^2([0,T];L^2(\Omega))$ and the convergence in ${W}_2$ imply the weak convergence as measures in space-time, so we can conclude that the whole family of curves $E=(\varrho^{\tau,\varepsilon}_{t})_{\tau>0}$ converges strongly to this solution in $L^2([0,T];L^2(\Omega)).$\\
Let $\varrho_t$   be the unique solution of (\ref{F-P}). By $\lim_{\tau \rightarrow 0}||\varrho-\varrho^{\tau,\varepsilon}||_{L^2([0,T];L^2(\Omega))}=0$ and (\ref{L^2 dist p^tau and p^tauep}), we have
\begin{eqnarray*}
 \lim_{\tau \rightarrow 0}||\varrho-\varrho^{\tau}||_{L^2([0,T];L^2(\Omega))}&\leq&  \lim_{\tau \rightarrow 0}||\varrho-\varrho^{\tau,\varepsilon}||_{L^2([0,T];L^2(\Omega))} + \lim_{\tau \rightarrow 0}||\varrho^{\tau,\varepsilon}-\varrho^{\tau}||_{L^2([0,T];L^2(\Omega))}\\
 &
\leq &\sqrt{\varepsilon}\cdot C(T)||\varrho_{0}||_{L^2(\Omega)}.\end{eqnarray*}
Letting $\varepsilon \rightarrow 0, $ we get $ \lim_{\tau \rightarrow 0}||\varrho-\varrho^{\tau}||_{L^2([0,T];L^2(\Omega))}=0.$
\end{proof}

The strong convergence in $L^2$ that we just obtained is not surprising, and is a necessary preliminary to pass on to the main goal of this section, which is a higher-order strong convergence result.
  \begin{theorem}[\textbf{Main Theorem I}]\label{Main Theorem I}
 Let  $[0,T]$ be a finite interval and  $\Omega$  be a  convex domain of $\mathbb{R}^d$ whose boundary $\partial \Omega$ is Lipschitz continuous. We assume  $V$ is Lipschitz continuous and  the initial data  $\varrho_0 $  is in $ \mathcal{P}(\Omega)\cap L^2(\Omega).$ As $\tau \rightarrow 0,$ the curve $\varrho^{\tau}_{t}$  strongly converges as $\tau\to 0$ to the solution of the Fokker-Planck equation (\ref{F-P}) in $L^2([0,T];H^1(\Omega)).$
 \end{theorem}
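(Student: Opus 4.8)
The plan is to bootstrap from what is already available: Proposition~\ref{p^t bounds} provides the weak convergence $\varrho^\tau \rightharpoonup \varrho$ in $L^2([0,T];H^1(\Omega))$, where $\varrho$ is the unique solution of \eqref{F-P}, and Corollary~\ref{cor:L^2-L^2 conver} provides the strong convergence $\varrho^\tau \to \varrho$ in $L^2([0,T];L^2(\Omega))$. Since $L^2([0,T];L^2(\Omega))$ (which I abbreviate $L^2_{t,x}$) is a Hilbert space and $\nabla\varrho^\tau \rightharpoonup \nabla\varrho$ weakly in it, it is enough to prove convergence of the gradient norms
\[
||\nabla\varrho^\tau||_{L^2_{t,x}} \longrightarrow ||\nabla\varrho||_{L^2_{t,x}},
\]
because in a Hilbert space weak convergence together with convergence of norms yields strong convergence; combined with the strong $L^2_{t,x}$ convergence of $\varrho^\tau$ itself, this gives the assertion. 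Weak lower semicontinuity already gives $\liminf_{\tau\to0}||\nabla\varrho^\tau||_{L^2_{t,x}}\ge||\nabla\varrho||_{L^2_{t,x}}$, so the whole content is the reverse inequality for the $\limsup$.

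To get it, I sum the flow-interchange inequality \eqref{eq:flow interchange} (that is, \eqref{Flow interchange} with $f(s)=s^2$) over $k=0,\dots,N-1$ \emph{without} applying Young's inequality. The left-hand side telescopes and, recalling $\varrho^\tau_0=\varrho_0$ and $\varrho^\tau_N=\varrho^\tau(T)$, one arrives at the discrete energy--dissipation inequality
\[
\int_\Omega \varrho_0^2\,dx-\int_\Omega (\varrho^\tau_N)^2\,dx\ \ge\ 2\int_0^T\!\!\int_\Omega |\nabla\varrho^\tau_t|^2\,dx\,dt+2\int_0^T\!\!\int_\Omega \varrho^\tau_t\,\nabla\varrho^\tau_t\cdot\nabla V\,dx\,dt,
\]
which is the exact discrete counterpart of the identity in Proposition~\ref{continuous L2 derivative}. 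Now I pass to the limit $\tau\to0$ term by term. Since $\varrho^\tau_N=\varrho^\tau(T)$ converges to $\varrho_T$ in $W_2$ (Theorem~\ref{th:Properties}) and is bounded in $L^2(\Omega)$ uniformly in $\tau$ (Theorem~\ref{th:L^p estimates}(iii)), it converges weakly in $L^2(\Omega)$ to $\varrho_T$, hence $\liminf_{\tau\to0}\int_\Omega(\varrho^\tau_N)^2\,dx\ge\int_\Omega\varrho_T^2\,dx$ by weak lower semicontinuity. For the cross term, $\varrho^\tau\to\varrho$ strongly in $L^2_{t,x}$ while $\nabla\varrho^\tau\rightharpoonup\nabla\varrho$ weakly in $L^2_{t,x}$ and $\nabla V\in L^\infty$; splitting $\varrho^\tau\nabla\varrho^\tau\cdot\nabla V-\varrho\nabla\varrho\cdot\nabla V=(\varrho^\tau-\varrho)\nabla\varrho^\tau\cdot\nabla V+\varrho\,\nabla(\varrho^\tau-\varrho)\cdot\nabla V$, the first term is bounded by $||\varrho^\tau-\varrho||_{L^2_{t,x}}\,||\nabla\varrho^\tau||_{L^2_{t,x}}\,||\nabla V||_{L^\infty}\to0$ and the second tends to $0$ because it is the weak convergence $\nabla(\varrho^\tau-\varrho)\rightharpoonup0$ tested against the fixed function $\varrho\,\nabla V\in L^2_{t,x}$. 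Therefore $\int_0^T\!\int_\Omega\varrho^\tau_t\,\nabla\varrho^\tau_t\cdot\nabla V\to\int_0^T\!\int_\Omega\varrho_t\,\nabla\varrho_t\cdot\nabla V$.

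Combining these with the discrete inequality yields
\[
2\limsup_{\tau\to0}||\nabla\varrho^\tau||^2_{L^2_{t,x}}\ \le\ \int_\Omega\varrho_0^2\,dx-\int_\Omega\varrho_T^2\,dx-2\int_0^T\!\!\int_\Omega\varrho_t\,\nabla\varrho_t\cdot\nabla V\,dx\,dt\ =\ 2\int_0^T\!\!\int_\Omega|\nabla\varrho_t|^2\,dx\,dt,
\]
the last equality being exactly Proposition~\ref{continuous L2 derivative} (which applies since $\varrho_0\in L^2(\Omega)$ and $V$ is Lipschitz). This is the desired bound on the $\limsup$; hence the gradient norms converge, so $\nabla\varrho^\tau\to\nabla\varrho$ strongly in $L^2_{t,x}$, and together with the strong $L^2_{t,x}$ convergence of $\varrho^\tau$ we conclude that $\varrho^\tau\to\varrho$ strongly in $L^2([0,T];H^1(\Omega))$.

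The two points that need care, I expect, are: (i) the passage to the limit in the final-time term $\int_\Omega(\varrho^\tau_N)^2\,dx$, where one must observe that the only available information --- $W_2$-convergence plus a uniform $L^2$ bound --- does deliver an inequality in the direction needed, thanks to weak-$L^2$ lower semicontinuity; and (ii) the identification of the limit of the cross term, which genuinely uses the strong $L^2_{t,x}$ convergence of $\varrho^\tau$ and is precisely why Corollary~\ref{cor:L^2-L^2 conver} had to be proved beforehand. Everything else is the bookkeeping that makes the discrete dissipation match the continuous one in the limit.
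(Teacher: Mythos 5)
Your proposal is correct and follows essentially the same route as the paper: sum the flow-interchange inequality with $f(s)=s^2$ to get the discrete energy--dissipation inequality, pass to the limit using weak lower semicontinuity for the final-time term, the strong $L^2_{t,x}$ plus weak $L^2_tH^1_x$ convergences for the cross term, and Proposition~\ref{continuous L2 derivative} to identify the limit, concluding by weak convergence plus convergence of norms. The only difference is presentational: you spell out the cross-term limit and the direction of the final-time inequality slightly more explicitly than the paper does.
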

 \begin{proof}
 Denote by $\varrho_t$ the unique solution of (\ref{F-P}). We saw in Proposition \ref{continuous L2 derivative} that we have
 \begin{equation}\label{contL2der}
 \displaystyle \int_{\Omega}\varrho_T^2(x)\,dx-\int_{\Omega}\varrho_0^2(x)\,dx= -2\int_0^T \int_{\Omega}|\nabla \varrho_t(x)|^2\,dxdt -2\int_0^T \int_{\Omega} \varrho_t(x) \, \nabla \varrho_t(x) \cdot \nabla V(x)\,dxdt. 
 \end{equation}
  By  the estimate (\ref{eq:flow interchange}), we have 
$$
\displaystyle \int_{\Omega}(\varrho_T^{\tau})^2\,dx-\int_{\Omega}\varrho_0^2\,dx=\int_{\Omega}(\varrho_{N}^{\tau})^2dx-\int_{\Omega}\varrho_0^2\,dx\leq$$ $$ -2 \sum_{k=0}^{N-1}\tau \int_{\Omega}\left(|\nabla \varrho^{\tau}_{k+1}|^2+\varrho^{\tau}_{k+1} \,\nabla \varrho^{\tau}_{k+1}\cdot \nabla V\right)\,dx=
$$ 
 \begin{equation}\label{eq:bound for p^t}
 -2 \int_0^T \int_{\Omega}\left(|\nabla \varrho^{\tau}_{t}|^2+\varrho^{\tau}_{t} \, \nabla \varrho^{\tau}_{t}\cdot \nabla V\right)\,dxdt.
 \end{equation}
 The estimate (\ref{eq:bound for p^t}) implies that
  $$\displaystyle \limsup_{\tau\rightarrow 0}\int_{\Omega}(\varrho_T^{\tau})^2\,dx-\int_{\Omega}\varrho_0^2\,dx \leq -2 \limsup_{\tau \rightarrow 0} \int_0^T \int_{\Omega}\left(|\nabla \varrho^{\tau}_{T}|^2+\varrho^{\tau}_{t} \, \nabla \varrho^{\tau}_{t}\cdot \nabla V\right)\,dxdt.$$
 By Theorem \ref{th:Properties},  we know that for any $ t \in [0,T],$ 
  $${W}_2(\varrho^{\tau}_{t}, \varrho_t)\rightarrow 0$$
    uniformly as $\tau\rightarrow 0.$
The convergence in Wasserstein distance for every $t$ implies 
 \begin{equation}\label{eq:not need}
 \displaystyle \liminf_{\tau \rightarrow 0} ||\varrho^{\tau}_t||_{L^2(\Omega)} \geq ||\varrho_t||_{L^2(\Omega)}.
 \end{equation}
 As a consequence of (\ref{eq:not need}), we have 
 
  \begin{equation}\label{eq:L^2 convergence a.e}
   \limsup_{\tau\rightarrow 0}\int_{\Omega}(\varrho_T^{\tau})^2\, dx \geq \int_{\Omega}(\varrho_T)^2\,dx. 
  \end{equation}
  Since $\varrho^{\tau}_{t}$ tends to $\varrho_{t}$  strongly in  $L^2([0,T];L^2(\Omega))$ and weakly in $L^2([0,T];H^1(\Omega)),$ we have
  \begin{equation}\label{eq: passing limit 1}
  \lim_{\tau \rightarrow 0} \int_0^T \int_{\Omega}\varrho^{\tau}_{t} \, \nabla \varrho^{\tau}_{t}\cdot \nabla V \,dx dt= \int_0^T \int_{\Omega} \varrho_{t} \, \nabla \varrho_{t}\cdot \nabla V\,dxdt
  \end{equation}
  and 
  \begin{equation}\label{eq:passing limit 2}
  \liminf_{\tau \rightarrow 0} \int_0^T \int_{\Omega}|\nabla \varrho^{\tau}_{t}|^2\, dxdt\geq \int_0^T \int_{\Omega}|\nabla \varrho_{t}|^2 \,dxdt.
  \end{equation}
 Equations (\ref{eq:L^2 convergence a.e}) and (\ref{eq: passing limit 1})  show that 
 $$\displaystyle 2 \limsup_{\tau \rightarrow      0} \int_{0}^{T} \int_{\Omega} |\nabla \varrho^{\tau}_{t}|^2 dx dt  \leq \int_{\Omega} \varrho_0^2 dx-\int_{\Omega} \varrho_T^2 dx -2\int_{0}^{T} \int_{\Omega} \varrho_{t} \cdot \nabla \varrho_{t} \cdot \nabla V dxdt.$$
Using \eqref{contL2der} and \eqref{eq:passing limit 2}, we obtain
$$
\lim_{\tau \rightarrow 0} \int_0^T \int_{\Omega}|\nabla \varrho^{\tau}_{t}|^2\, dxdt =\int_0^T \int_{\Omega}|\nabla \varrho_{t}|^2\, dxdt.
$$
Together with the weak $L^2$ convergence of $\nabla\varrho^\tau$ to $\nabla\varrho$ this implies that  $\varrho^{\tau}_t$  converges strongly to $\varrho_t$ in $L^2([0,T];H^1(\Omega)).$
\end{proof}

\section{Sobolev estimates on the JKO scheme}
In this section we want to present important  estimates in Sobolev spaces which were established in \cite{F.DiM}. These type estimates require to consider some important  inequalities in optimal transportation.\\
 \begin{lemma}\label{5GI}[\textbf{Five-gradients inequality}]
 Let $\Omega$ be a bounded, uniformly convex domain with $C^2$ boundary and  $\varrho,g \in W^{1,1}(\Omega)\cap C^{\alpha}(\bar{\Omega}),$ $0<\alpha<1,$ be two strictly positive probability densities. Let $H \in C^2(\mathbb{R}^d)$ be a radially symmetric convex function. Then the following hold
 \begin{multline}\label{eq:five.gr}
 \int_{\Omega} \left( \nabla \varrho \cdot \nabla H(\nabla \varphi)+\nabla g \cdot \nabla H(\nabla \psi) \right)dx\\= \int_{\Omega}\varrho \mathrm{Tr}\{D^2H(\nabla\varphi)\cdot (D^2\varphi)^2 \cdot(I-D^2\varphi)^{-1}\}dx+ \int_{\partial\Omega}\left(\varrho\nabla H(\nabla\varphi)\cdot \vec{n} + g \nabla H(\nabla \psi)\cdot \vec{n}\right) d \mathcal{H}^{d-1}\geq 0
 \end{multline}
 where $(\varphi,\psi)$ is a choice of Kantorovich potentials in the transport from $\varrho$ to $g$.
 \end{lemma}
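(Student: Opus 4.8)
The plan is to reduce the left‑hand side to a single integral against $\varrho$ by one integration by parts followed by a change of variables along the optimal transport map, mimicking the formal computation one would do on smooth data, and then to read off the identity by matrix algebra and the inequality by a sign analysis of each resulting term. First I would fix the regularity framework: since $\Omega$ is uniformly convex with $C^2$ boundary and $\varrho,g\in C^\alpha(\bar\Omega)$ are strictly positive (hence bounded away from $0$ and $+\infty$ on $\bar\Omega$), Caffarelli's global regularity theory for the Monge--Amp\`ere equation applies, so the Brenier potentials are $C^{2,\alpha}(\bar\Omega)$, the optimal maps $T=\mathrm{id}-\nabla\varphi$ (from $\varrho$ to $g$) and $S=\mathrm{id}-\nabla\psi$ (from $g$ to $\varrho$) are mutually inverse $C^{1,\alpha}$ diffeomorphisms of $\bar\Omega$, and $I-D^2\varphi=DT$ is symmetric positive definite everywhere (it is symmetric positive semidefinite and $\det(I-D^2\varphi)=\varrho/(g\circ T)>0$ by Monge--Amp\`ere). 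In particular $\nabla H(\nabla\varphi)$ and $\nabla H(\nabla\psi)$ belong to $C^1(\bar\Omega)$, so all the integrations by parts below are legitimate for $\varrho,g\in W^{1,1}(\Omega)$.

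Next I would integrate by parts both terms on the left, using $\mathrm{div}(\nabla H(\nabla\varphi))=\mathrm{Tr}(D^2H(\nabla\varphi)\,D^2\varphi)$ and the analogue for $(g,\psi)$; this produces exactly the two boundary integrals appearing in the statement together with the interior integrals $-\int_\Omega\varrho\,\mathrm{Tr}(D^2H(\nabla\varphi)\,D^2\varphi)\,dx$ and $-\int_\Omega g\,\mathrm{Tr}(D^2H(\nabla\psi)\,D^2\psi)\,dy$. In the second one I would change variables $y=T(x)$ using $T_\#\varrho=g$, turning it into $-\int_\Omega\varrho\,\mathrm{Tr}\big(D^2H(\nabla\psi(T(x)))\,D^2\psi(T(x))\big)\,dx$. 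From $S\circ T=\mathrm{id}$ one reads off $\nabla\psi\circ T=-\nabla\varphi$, and differentiating this identity gives $D^2\psi\circ T=-D^2\varphi\,(I-D^2\varphi)^{-1}$; since $H$ is radially symmetric, $D^2H$ is even, so $D^2H(\nabla\psi\circ T)=D^2H(\nabla\varphi)$, and the integral becomes $+\int_\Omega\varrho\,\mathrm{Tr}\big(D^2H(\nabla\varphi)\,D^2\varphi\,(I-D^2\varphi)^{-1}\big)\,dx$. Adding the two interior integrals and using $(I-A)^{-1}-I=(I-A)^{-1}A$ together with the commutation of $D^2\varphi$ and $(I-D^2\varphi)^{-1}$, their sum collapses to $\int_\Omega\varrho\,\mathrm{Tr}\big(D^2H(\nabla\varphi)\,(D^2\varphi)^2\,(I-D^2\varphi)^{-1}\big)\,dx$, which is the asserted identity.

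For the nonnegativity, each of $D^2H(\nabla\varphi)$, $(D^2\varphi)^2$ and $(I-D^2\varphi)^{-1}$ is symmetric positive semidefinite, and the last two commute, so $B:=(D^2\varphi)^2(I-D^2\varphi)^{-1}$ is positive semidefinite; hence $\mathrm{Tr}(D^2H(\nabla\varphi)\,B)=\mathrm{Tr}\big((D^2H(\nabla\varphi))^{1/2}\,B\,(D^2H(\nabla\varphi))^{1/2}\big)\ge0$, and the interior integral is $\ge0$ since $\varrho\ge0$. For the boundary integrals, convexity of $\Omega$ gives that $T(x)\in\bar\Omega$ lies in the supporting half‑space of $\bar\Omega$ at $x\in\partial\Omega$, i.e. $-\nabla\varphi(x)\cdot\vec{n}(x)=(T(x)-x)\cdot\vec{n}(x)\le0$, so $\nabla\varphi\cdot\vec{n}\ge0$ on $\partial\Omega$; writing $H(v)=h(|v|)$ with $h$ convex and $h'\ge0$ (which follows from convexity of $H$), one has $\nabla H(v)\cdot\vec{n}=h'(|v|)\,(v\cdot\vec{n})/|v|\ge0$ whenever $v\cdot\vec{n}\ge0$, hence $\nabla H(\nabla\varphi)\cdot\vec{n}\ge0$ on $\partial\Omega$, and the same argument applied to $S$ gives $\nabla H(\nabla\psi)\cdot\vec{n}\ge0$. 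Since $\varrho,g\ge0$, both boundary integrals are $\ge0$, which closes the argument.

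I expect the one genuinely delicate point to be the regularity bookkeeping: one must check that the hypotheses (uniform convexity, $C^2$ boundary, H\"older and strictly positive densities) place us within the scope of global $C^{2,\alpha}$ regularity for the transport potentials, so that the change of variables, the pointwise differentiation of $\nabla\psi\circ T=-\nabla\varphi$, and the matrix inverse $(I-D^2\varphi)^{-1}$ are all justified; once this is granted, the algebra is short and the sign analysis elementary.
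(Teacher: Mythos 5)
Your proposal is correct and follows essentially the same route as the paper's proof: Caffarelli regularity to justify the computation, integration by parts producing the boundary terms, the change of variables $y=T(x)$ with $\nabla\psi\circ T=-\nabla\varphi$ and $D^2\psi\circ T=-D^2\varphi(I-D^2\varphi)^{-1}$ to collapse the interior terms, and the same sign analysis for both the interior integrand and the boundary fluxes. Your justification of invertibility of $I-D^2\varphi$ via the Monge--Amp\`ere equation is, if anything, slightly more explicit than the paper's.
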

 \begin{proof}
 Because of our assumptions on the densities, Caffarelli's regularity theory (see \cite{C1}-\cite{C6}) implies that the Kantorovich potentials, $\varphi$ and $\psi,$ are in $C^{2+\alpha}(\bar{\Omega}) ,$ so we can use the integration  by part to write
 \begin{multline*}
 \displaystyle \int_{\Omega} \left( \nabla \varrho \cdot H(\nabla \varphi)+\nabla g \cdot H(\nabla \psi) \right)dx=\\
-\int_{\Omega} \left( \varrho\, \text{div}[\nabla H(\nabla \varphi)] +g \, \text{div}[\nabla H(\nabla \psi)] \right)\,dx+ \int_{\partial \Omega} \left(\varrho\nabla H(\nabla \varphi)\cdot \vec{n}+g \nabla H(\nabla \psi)\cdot \vec{n}\right)\,d\mathcal{H}^{d-1}
 \end{multline*}
 Due to the radial symmetry of $H$ we have
 \begin{equation}\label{eq:rad.symm}
 \nabla H(\nabla \psi)=-\nabla H(-\nabla \psi),
 \end{equation}
 and $\nabla H(z)$ is a positive scalar multiple of $z$ for every vector $z$.  Since the gradients of the Kantorovich potentials $\nabla \varphi$ and $\nabla \psi$ calculated at boundary points are pointing outward $\Omega,$ (as a consequence of the optimal transport map $T(x)=x-\nabla \varphi(x) \in \Omega$ and its inverse $S(x)=x-\nabla \psi \in \Omega$) we see that the boundary term is non-negative:
 \begin{equation}\label{eq:grH*n}
 \nabla H( \nabla \varphi(x)) \cdot \vec{n}(x)\geq 0 \text{   and  }  \nabla H( \nabla \psi(x)) \cdot \vec{n}(x)\geq 0
 \end{equation}
 We now consider the term with the divergence, and we have
 $$ \text{div}[\nabla H(\nabla \varphi)]=\mathrm{Tr} (D^2H(\nabla\varphi)\cdot D^2\varphi),\qquad \text{div}[\nabla H(\nabla \psi)]=\mathrm{Tr} (D^2H(\nabla\psi)\cdot D^2\psi).$$
  We then use $g=T_\#\varrho$ to write
  $$\int_\Omega g\mathrm{Tr} (D^2H(\nabla\psi)\cdot D^2\psi)=\int_\Omega \varrho \mathrm{Tr} (D^2H(\nabla\psi\circ T)\cdot D^2\psi\circ T).$$
  Since $H$ is radially symmetric and hence even, $D^2H$ is also even, and using $\nabla\psi\circ T=-\nabla\varphi$ we can write 
  $$\int_{\Omega} \left( \varrho\, \text{div}[\nabla H(\nabla \varphi)] +g \, \text{div}[\nabla H(\nabla \psi)] \right)\,dx=\int_{\Omega}  \varrho \mathrm{Tr}( D^2H(\varphi)\cdot( D^2\varphi + D^2\psi\circ T)).$$
  We now use the relation $(id-\nabla\psi)\circ T=id$ with $T=id-\nabla\varphi$  to see that we have $D^2\psi\circ T=I-(I-D^2\varphi)^{-1}$ and then $D^2\varphi + D^2\psi\circ T=-(D^2\varphi)^2 (I-D^2\varphi)^{-1},$ which proves the claimed formula. We are only left to see the positivity of the integral on $\Omega$ on the r.h.s. but the integrand is pointwise positive as a consequence of $D^2H\geq 0,$  $(D^2\varphi)^2\geq 0$ and  $I-D^2\varphi\geq 0$ since $\frac{|x|^2}{2}-\varphi$ is convex.
  \end{proof}
  
  The result of previous lemma, that we proved in details, was first proven in \cite{five.grad} , but both in \cite{five.grad}  and in \cite{F.DiM} only the positivity was used, without considering the precise remainder terms. Indeed, even ignoring these terms and just using the positivity of $\int \nabla\varrho\cdot\nabla H(\nabla\varphi)+\nabla g\cdot \nabla H(\nabla \psi)$, it is possible to obtain estimates on the optimizers of the JKO scheme for the Fokker-Planck equation, as it is explained in \cite{F.DiM}.
    
  More precisely, we have the following estimates.
  \begin{proposition}\label{boundsFp}
 Suppose $\Omega$ is a bounded and uniformly convex domain with $C^2$ boundary, and $V \colon \bar{\Omega} \to \mathbb{R} $ is  Lipschitz continuous. Let $\varrho_0 \in W^{1,1}(\Omega)\cap C^{\alpha}(\bar{\Omega}),$ $0<\alpha<1,$  $(\varrho_k^\tau)$ be the sequence obtained in the JKO scheme \eqref{Sequence} and $H$ be a convex radially symmetric function. Let  $(\varphi_k,\psi_k)$ denote the pair of Kantorovich potentials in the transport from $\varrho_{k+1}^\tau$ to $\varrho_k^\tau$ and $T_k$ the corresponding optimal map, i.e. $T_k(x)=x-\nabla\varphi_k(x)$. Then we have
 \begin{eqnarray}
  \int_{\Omega} H(\nabla(\log{\varrho^\tau_k} +V)) d\varrho^\tau_k &\geq&
  \int_{\Omega} H(\nabla(\log{\varrho^\tau_{k+1}} +V)) d\varrho^\tau_{k+1}\notag\\ & +& \int_{\Omega} \nabla H\left( \frac{\nabla \varphi_k}{\tau}\right) \cdot (\nabla V-\nabla V \circ T_k) d\varrho^\tau_{k+1}+R,
\end{eqnarray}
where $R\geq 0$ is the remainder term in the statement of Lemma \ref{5GI}, i.e. 
\begin{eqnarray*} R:= \frac{1}{\tau}\int_{\Omega}\varrho \mathrm{Tr}\{D^2H \left(\frac{\nabla\varphi_k}{\tau}\right)\cdot (D^2\varphi_k)^2 \cdot (I-D^2\varphi_k)^{-1}\}dx +\\ \int_{\partial\Omega}(\varrho^\tau_{k+1}\nabla H\left(\frac{\nabla\varphi_k}{\tau}\right)\cdot \vec{n} + \varrho^\tau_k \nabla H\left(\frac{\nabla \psi_k}{\tau}\right)\cdot\vec{ n} )d \mathcal{H}^{d-1}.
\end{eqnarray*}

In particular, when $H(z)=\frac{|z|^p}{p},$ $V\in V^2(\bar{\Omega})$ and $\lambda \in \mathbb{R}$ such that $D^2V\geq \lambda I$, ignoring the positive terms and using $\nabla H(z)\cdot z=pH(z)$, we obtain
$$F_p(\varrho^\tau_k)\geq (1+p\lambda\tau) F_p(\varrho^\tau_{k+1}).$$
\end{proposition}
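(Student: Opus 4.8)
The strategy is to fix a step $k$ and combine three ingredients: the Euler--Lagrange condition \eqref{Optimality condition}, the convexity of $H$, and the five-gradients inequality of Lemma \ref{5GI} applied not to $H$ itself but to the rescaled function $H_\tau(z):=H(z/\tau)$ (still $C^2$, convex and radially symmetric). First I fix the regularity: for $k\geq 1$ the densities $\varrho^\tau_k$ are strictly positive and Lipschitz by Theorem \ref{th:Properties} (for $k=0$ these are the assumptions on $\varrho_0$), hence $\varrho^\tau_k,\varrho^\tau_{k+1}\in W^{1,1}(\Omega)\cap C^\alpha(\bar\Omega)$ are admissible in Lemma \ref{5GI}, and Caffarelli's theory gives $\varphi_k,\psi_k\in C^{2+\alpha}(\bar\Omega)$, so that the optimal map $T_k=\mathrm{id}-\nabla\varphi_k$, its inverse $\mathrm{id}-\nabla\psi_k$ and the relations $(T_k)_\#\varrho^\tau_{k+1}=\varrho^\tau_k$, $\nabla\psi_k\circ T_k=-\nabla\varphi_k$ may be used freely. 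From \eqref{Optimality condition} I record $\nabla(\log\varrho^\tau_{k+1}+V)=-\nabla\varphi_k/\tau$; in particular, $H$ being even, $\int_\Omega H(\nabla(\log\varrho^\tau_{k+1}+V))\,d\varrho^\tau_{k+1}=\int_\Omega H(\nabla\varphi_k/\tau)\,d\varrho^\tau_{k+1}$.

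Writing $u_j:=\nabla(\log\varrho^\tau_j+V)$, I transport the left-hand term, $\int_\Omega H(u_k)\,d\varrho^\tau_k=\int_\Omega H(u_k\circ T_k)\,d\varrho^\tau_{k+1}$, and use convexity of $H$ pointwise to get
$$\int_\Omega H(u_k)\,d\varrho^\tau_k-\int_\Omega H(u_{k+1})\,d\varrho^\tau_{k+1}\ \geq\ \int_\Omega \nabla H(u_{k+1})\cdot(u_k\circ T_k-u_{k+1})\,d\varrho^\tau_{k+1}.$$
The whole content is then to identify the right-hand side as $R+\int_\Omega \nabla H(\nabla\varphi_k/\tau)\cdot(\nabla V-\nabla V\circ T_k)\,d\varrho^\tau_{k+1}$. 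Using $u_{k+1}=-\nabla\varphi_k/\tau$ and the oddness of $\nabla H$, together with $u_k\circ T_k-u_{k+1}=(\nabla\log\varrho^\tau_k)\circ T_k+\nabla V\circ T_k+\nabla\varphi_k/\tau$, I split the integrand into three pieces; the one carrying $(\nabla\log\varrho^\tau_k)\circ T_k$ becomes, after the change of variables (and $\nabla H(\nabla\varphi_k/\tau)=-(\nabla H(\nabla\psi_k/\tau))\circ T_k$), the quantity $\int_\Omega\nabla\varrho^\tau_k\cdot\nabla H(\nabla\psi_k/\tau)\,dx$. Now Lemma \ref{5GI} for $H_\tau$, with $\varrho=\varrho^\tau_{k+1}$, $g=\varrho^\tau_k$, enters: since $\nabla H_\tau=\tau^{-1}\nabla H(\cdot/\tau)$ and $D^2H_\tau=\tau^{-2}D^2H(\cdot/\tau)$, multiplying the resulting identity by $\tau$ yields exactly
$$\int_\Omega \nabla\varrho^\tau_{k+1}\cdot\nabla H(\nabla\varphi_k/\tau)\,dx+\int_\Omega \nabla\varrho^\tau_k\cdot\nabla H(\nabla\psi_k/\tau)\,dx=R\geq 0,$$
with $R$ the remainder written in the statement. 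Substituting, and expanding $\int_\Omega\nabla\varrho^\tau_{k+1}\cdot\nabla H(\nabla\varphi_k/\tau)\,dx=-\int_\Omega\varrho^\tau_{k+1}\nabla V\cdot\nabla H(\nabla\varphi_k/\tau)\,dx-\int_\Omega\varrho^\tau_{k+1}\tfrac{\nabla\varphi_k}{\tau}\cdot\nabla H(\tfrac{\nabla\varphi_k}{\tau})\,dx$ via the optimality identity, the two occurrences of $\int_\Omega\varrho^\tau_{k+1}\tfrac{\nabla\varphi_k}{\tau}\cdot\nabla H(\tfrac{\nabla\varphi_k}{\tau})\,dx$ cancel, leaving the announced formula.

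For the last statement I take $H(z)=|z|^p/p$, so $\nabla H(z)=|z|^{p-2}z$, $\nabla H(z)\cdot z=pH(z)$, and $\int_\Omega H(u_j)\,d\varrho^\tau_j=F_p(\varrho^\tau_j)$. Dropping the nonnegative $R$, I am left with $F_p(\varrho^\tau_k)-F_p(\varrho^\tau_{k+1})\geq\int_\Omega|\nabla\varphi_k/\tau|^{p-2}\tfrac{\nabla\varphi_k}{\tau}\cdot(\nabla V-\nabla V\circ T_k)\,d\varrho^\tau_{k+1}$. Writing $\nabla V-\nabla V\circ T_k=\big(\int_0^1 D^2V(x-s\nabla\varphi_k(x))\,ds\big)\nabla\varphi_k(x)$ and using $D^2V\geq\lambda I$, one gets $\tfrac{\nabla\varphi_k}{\tau}\cdot(\nabla V-\nabla V\circ T_k)\geq\lambda\tau|\nabla\varphi_k/\tau|^2$, so the integrand is $\geq\lambda\tau|\nabla\varphi_k/\tau|^p$; integrating and using $|\nabla\varphi_k/\tau|=|u_{k+1}|$, the right-hand side becomes $p\lambda\tau F_p(\varrho^\tau_{k+1})$, that is $F_p(\varrho^\tau_k)\geq(1+p\lambda\tau)F_p(\varrho^\tau_{k+1})$.

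The main obstacle is the bookkeeping of the second paragraph: keeping the optimal-transport identities straight (the inverse-map relation, the parity of $H$ and $\nabla H$, the two push-forward changes of variables) and, above all, matching the remainder $R$ of the statement term by term when Lemma \ref{5GI} is applied to $H(\cdot/\tau)$ rather than to $H$ --- in particular producing precisely the factor $\tau^{-1}$ in front of its interior part. The reason to route everything through the five-gradients identity, rather than differentiating $\int_\Omega H(u_k\circ T_k)\,d\varrho^\tau_{k+1}$ directly, is that it confines the computation to second derivatives of $\varphi_k$ and avoids third-order terms.
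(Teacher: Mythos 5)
Your proof is correct, and it is exactly the argument the paper delegates to Lemma 5.1 of \cite{F.DiM}: convexity of $H$ along the transport $T_k$, the optimality condition $\nabla(\log\varrho^\tau_{k+1}+V)=-\nabla\varphi_k/\tau$, and the five-gradients identity applied to $H(\cdot/\tau)$ with the remainder retained; your bookkeeping (the cancellation of the two $\int\varrho^\tau_{k+1}\,\tfrac{\nabla\varphi_k}{\tau}\cdot\nabla H(\tfrac{\nabla\varphi_k}{\tau})$ terms and the scaling producing the $\tau^{-1}$ in $R$) checks out, as does the final estimate via $D^2V\geq\lambda I$. The only caveat, inherited from the statement itself rather than introduced by you, is that the $k=0$ step needs $\varrho_0$ strictly positive for Lemma \ref{5GI} to apply, which is not explicitly listed among the hypotheses here but is assumed where the proposition is used.
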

\begin{proof}
The proof is exactly the same as in Lemma 5.1 of \cite{F.DiM}, with the only difference that the above statement requires to keep track of the positive remainder terms. We observe that at every step of the JKO scheme the obtained densities are smooth enough to justify the application of Lemma \ref{5GI} of our paper (it is only by chance that the lemmas are both numbered 5.1 in the two different papers).
\end{proof}

We now proceed to some uniform estimates on the minimizers of the JKO scheme and on the corresponding potentials.

  \begin{proposition}\label{propphik}
   Suppose $\Omega$ is a bounded and uniformly convex domain with $C^2$ boundary, and $V \in C^2(\bar{\Omega}).$  Let $(\varrho_k^\tau)$ be the sequence obtained in the JKO scheme \eqref{Sequence}, $(\varphi_k,\psi_k)$ denote the pair of Kantorovich potentials in the transport from $\varrho_{k+1}^\tau$ to $\varrho_k^\tau$ and $T_k$ the corresponding optimal map, i.e. $T_k(x)=x-\nabla\varphi_k(x)$. Suppose that $\varrho_0$ is bounded from below and above by positive constants and denote by $a,b$ two  constants such that $a\leq \log\varrho_0+V\leq b$. Suppose moreover that we have $\varrho_0\in W^{1,p}(\Omega)$ for some exponent $p>d$. Then we have:
 \begin{enumerate}
 \item For each $k$ we have $a\leq \log(\varrho^\tau_k)+V\leq b$. In particular, all $\varrho^\tau_k$ are bounded from below and above by some uniform positive constants.
 \item All the potentials $\varphi_k$ satisfy $||id-T_k||_{L^\infty}=||\nabla\varphi_k||_{L^\infty}\leq C\tau^{1/(d+2)}$.
 \item If $\tau$ is small enough (depending on $V$ and $p$), then the values of $F_p(\varrho^\tau_k)$ are uniformly bounded by a constant depending on $\varrho_0$ and on $T$. In particular, the densities $\varrho^\tau_k$ are bounded in $C^{0,\alpha}$ for $\alpha=1-\frac dp>0$.
 \item All the potentials $\varphi_k$ belong to $C^{2+\alpha}(\bar{\Omega}) $  and  $||\varphi_k||_{C^{2+\alpha}(\bar{\Omega})} $ is bounded by a uniform constant. 
 \item The potentials  $\varphi_k$ also satisfy $||D^2\varphi_k||\leq C\tau^\beta$ for a certain exponent $\beta>0$.
  \end{enumerate}
 
 \end{proposition}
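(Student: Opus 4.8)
The plan is to establish the five claims in the given order, since each one feeds into the next. For item (1), I would argue by induction on $k$: assuming $a\leq \log\varrho^\tau_k+V\leq b$, the optimality condition \eqref{Optimality condition} gives $\log\varrho^\tau_{k+1}+V=\mathrm{const}-\varphi_k/\tau$, and since $\varphi_k$ is a $c$-concave Kantorovich potential with $\frac{|x|^2}{2}-\varphi_k$ convex and $T_k(x)=x-\nabla\varphi_k(x)\in\Omega$, one controls the oscillation of $\varphi_k$ in terms of the diameter of $\Omega$ and uses the mass conservation $\int\varrho^\tau_{k+1}=1$ to locate the constant; a standard comparison (this is the maximum-principle-type argument already known for the JKO scheme) shows the bounds are preserved. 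For item (2), I would use the argument from \cite{Bouchitte}: the $L^\infty$ bound on the displacement $|T_k(x)-x|=|\nabla\varphi_k(x)|$ follows from comparing the transport cost $W_2^2(\varrho^\tau_{k+1},\varrho^\tau_k)=\int|\nabla\varphi_k|^2\varrho^\tau_{k+1}$, which by item (1) and \eqref{W_2 bound} is $O(\tau)$ at each step, with a lower bound coming from the fact that the densities are bounded below (so mass cannot be moved too far without incurring cost proportional to the volume swept); optimizing the resulting inequality yields the exponent $1/(d+2)$.

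For item (3), I would apply Proposition \ref{boundsFp} with $H(z)=|z|^p/p$: this gives $F_p(\varrho^\tau_{k+1})\leq (1-p\lambda\tau)^{-1}F_p(\varrho^\tau_k)$ when $\tau$ is small, hence iterating, $F_p(\varrho^\tau_k)\leq e^{Ck\tau}F_p(\varrho_0)\leq e^{CT}F_p(\varrho_0)$, which is finite by hypothesis since $\varrho_0\in W^{1,p}$, $\varrho_0$ is bounded below, and $V\in C^2$. Then $\left|\frac{\nabla\varrho^\tau_k}{\varrho^\tau_k}+\nabla V\right|\in L^p$ together with the uniform lower bound on $\varrho^\tau_k$ from item (1) gives $\nabla\varrho^\tau_k\in L^p$ uniformly; since $p>d$, Morrey's inequality yields the uniform $C^{0,\alpha}$ bound with $\alpha=1-d/p$. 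For item (4), I would combine the uniform $C^{0,\alpha}$ bound on $\varrho^\tau_k$ and $\varrho^\tau_{k+1}$ from item (3) with the uniform two-sided bounds from item (1), feeding these into Caffarelli's regularity theory (as invoked in Lemma \ref{5GI}): the Monge-Amp\`ere equation $\det(I-D^2\varphi_k)=\varrho^\tau_{k+1}/(\varrho^\tau_k\circ T_k)$ with $C^{0,\alpha}$ right-hand side bounded between positive constants and with $\Omega$ uniformly convex gives $\varphi_k\in C^{2+\alpha}(\bar\Omega)$ with a bound depending only on these data, hence uniform in $k$ and $\tau$. Finally, for item (5), I would interpolate: $\|D^2\varphi_k\|_{L^\infty}$ is controlled by interpolating the uniform $C^{2+\alpha}$ bound from item (4) against the smallness $\|\nabla\varphi_k\|_{L^\infty}\leq C\tau^{1/(d+2)}$ from item (2) — an inequality of the form $\|D^2 f\|_{L^\infty}\leq C\|f\|_{C^{2+\alpha}}^{\theta}\|\nabla f\|_{L^\infty}^{1-\theta}$ with $\theta=1/(1+\alpha)$ — giving $\|D^2\varphi_k\|\leq C\tau^\beta$ with $\beta=\frac{\alpha}{(1+\alpha)(d+2)}>0$.

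The main obstacle is item (4): one must invoke global Caffarelli regularity for the Monge-Amp\`ere / optimal-transport problem between two domains equal to $\Omega$, and the delicate point is that the constant in the $C^{2+\alpha}$ estimate must be uniform over $k$ and $\tau$. This requires checking that the inputs to Caffarelli's theorem — uniform convexity of $\Omega$ (fixed), the $C^{0,\alpha}$ norms of the two densities, and their uniform upper and lower bounds — are all controlled independently of $k,\tau$, which is exactly what items (1) and (3) provide; one also uses that $T_k$ is $C^{1+\alpha}$-close to the identity (via item (2)) so that $\varrho^\tau_k\circ T_k$ inherits a uniform $C^{0,\alpha}$ bound. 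I would state the needed form of Caffarelli's estimate carefully and reference \cite{C1}-\cite{C6} as in Lemma \ref{5GI}.
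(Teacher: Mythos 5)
Your proposal follows essentially the same route as the paper for items (2)--(5): the $L^\infty$ displacement bound of Bouchitt\'e--Jimenez--Rajesh combined with the one-step estimate $W_2^2(\varrho^\tau_{k+1},\varrho^\tau_k)\leq 2\tau(J(\varrho^\tau_k)-J(\varrho^\tau_{k+1}))\leq C\tau$; the exponential propagation of $F_p$ via Proposition \ref{boundsFp} plus the lower bound on the density and Morrey's embedding; global Caffarelli regularity with inputs made uniform in $k$ and $\tau$; and the H\"older interpolation of $\nabla\varphi_k$ between $C^{1+\alpha}$ and $C^0$, giving the same exponent $\beta=\frac{\alpha}{(1+\alpha)(2+d)}$. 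Your remark that the delicate point in (4) is the uniformity of the Caffarelli constant is exactly the point the paper relies on.

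The one place where your sketch, taken literally, would fail is item (1). Controlling the oscillation of $\varphi_k/\tau$ by $\mathrm{diam}(\Omega)\,\|\nabla\varphi_k\|_{L^\infty}/\tau$ gives a quantity of order $\tau^{1/(d+2)-1}$, which blows up as $\tau\to0$; and even a per-step degradation of the constants $a,b$ by $O(\tau)$ would at best give exponential growth, not the exact preservation $a\leq\log\varrho^\tau_k+V\leq b$ claimed. The correct mechanism --- which is what the paper invokes by citing Lemma 2.4 of \cite{Max-Min}, and which your parenthetical ``maximum-principle-type argument'' presumably intends --- is pointwise: at a maximum point $x_0$ of $\log\varrho^\tau_{k+1}+V$ the potential $\varphi_k$ is minimal, hence $D^2\varphi_k(x_0)\geq0$ and $\nabla\varphi_k(x_0)=0$, so the Monge--Amp\`ere relation $\varrho^\tau_{k+1}=\varrho^\tau_k\circ T_k\,\det(I-D^2\varphi_k)$ yields $\varrho^\tau_{k+1}(x_0)\leq\varrho^\tau_k(x_0)$ and the upper bound $b$ is transmitted unchanged (and symmetrically for $a$). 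No oscillation bound or normalization of the constant is needed. With that substitution your argument is complete and matches the paper's.
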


\begin{proof}
\begin{enumerate}
 \item The uniform estimates on $\varrho^\tau$, already cited in \cite{F.DiM}, is the same as in Lemma 2.4 of \cite{Max-Min}.
 \item  Whenever $\mu,\nu$ are two measures in a convex domain $\Omega\subset\R^d$ and $T$ is the corresponding optimal map sending $\mu$ onto $\nu$, if the density of $\mu$ is bounded from below by a constant $c_0>0$, then the following remarkable estimate is proven in \cite{Bouchitte}  :
$$||T-id||_{L^\infty}\leq C(d,c_0) W_2(\mu,\nu)^{2/(d+2)}.$$
 If we combine this with
  \begin{equation*}
  {{W}^2_2(\varrho^\tau_{k+1}, \varrho^\tau_k)} \leq 2\tau \,(J(\varrho^\tau_k)-J(\varrho^\tau_{k+1}))\leq C \tau.
\end{equation*}
and the fact that the density $\varrho^\tau_{k+1}$ is bounded from below by a universal constant, we then have
\begin{equation}\label{unifDphi}
||\nabla \varphi_k ||_{L^{\infty}(\Omega)}^{2+d}=||id-T_k||_{L^{\infty}(\Omega)}^{2+d}\leq CW_2^2(\varrho^\tau_{k+1},\varrho^\tau_k)\leq C\tau.
\end{equation}

 \item First we note that, since $\varrho_0$ is bounded and $W^{1,p}$, we have $F_p(\varrho_0)<+\infty$. Then, Proposition \ref{boundsFp} guarantees that $F_p(\varrho^\tau_k)$ grows at most exponentially (here we use the smallness assumption on $\tau$, as we need $1+ p\lambda\tau>0$, where $\lambda$ is the lower bound for the second derivatives of $V$, which could be negative). We then obtain a uniform bound on $F_p(\varrho^\tau_k)$ and on $||\varrho^\tau_k||_{W^{1,p}}$, as a consequence of the uniform lower bound on $\varrho^\tau_k$. The well-known injection of Sobolev spaces into H\"older spaces gives the rest of the claim.
 \item The bound on $||\varphi_k||_{C^{2+\alpha}(\bar{\Omega})} $ is a consequence of Caffarelli's regularity theory for the Monge-Amp\`ere equation (see \cite{C1}-\cite{C6}), once we have proven that the densities are uniformly bounded from above, from below, and in $C^{0,\alpha}$, when the domain is uniformly convex and $C^{2}$.
 \item If we apply the interpolation inequality  (\ref{eq:Interp.}) to $u=\nabla \varphi$ in the case $\gamma=1+\alpha$ and $\theta=1,$ we get
$$
||\nabla \varphi_k||_{C^{1}(\bar{\Omega})} \leq c ||\nabla \varphi_k ||^{\frac{1}{1+\alpha}}_{C^{1+\alpha}(\bar{\Omega})}\, ||\nabla \varphi_k||^{\frac{\alpha}{1+\alpha}}_{C(\bar{\Omega})}.
$$
Using \eqref{unifDphi} and  the uniform bounds on $ ||\varphi_k||_{C^{2+\alpha}(\bar{\Omega})}$ we obtain the claim with $\beta=\frac{\alpha}{(1+\alpha)(2+d)}.$
 \end{enumerate} 
 \end{proof}
 
 We mentioned an interpolation inequality involving higher-order H\"older norms: here below is a precise statement, whose proofs can be found in \cite[Proposition 1.1.3]{A.L} .
%
\begin{theorem}[\textbf{Interpolation inequality for H\"older continuous functions}]
Let $0<\theta<\gamma$ and  $\Omega$  be a  open set in $\mathbb{R}^d$ with uniformly  $C^{\gamma}$ boundary.  Then there exists a positive constant $c$ depending on $\bar{\Omega}, \theta$ and $\gamma$  such that
\begin{equation}\label{eq:Interp.}
||u||_{C^{\theta}(\bar{\Omega})} \leq c ||u||^{\frac{\theta}{\gamma}}_{C^{\gamma}(\bar{\Omega})}\, ||u||^{1-\frac{\theta}{\gamma}}_{C(\bar{\Omega})} 
\end{equation} 
for all $u \in {C^{\gamma}(\bar{\Omega})}.$

\end{theorem}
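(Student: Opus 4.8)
Let us indicate how \eqref{eq:Interp.} can be proved (the result is classical, see \cite[Proposition 1.1.3]{A.L}). The plan is to reduce it to two elementary ingredients — a pointwise estimate on difference quotients, handling the purely fractional range $0<\theta<\gamma\le 1$, and a Taylor-expansion estimate controlling intermediate derivatives — and then to combine them by a short bookkeeping argument.

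For the fractional case, recall that $\|u\|_{C^\sigma(\bar\Omega)}=\|u\|_{C^0(\bar\Omega)}+[u]_\sigma$ with $[u]_\sigma=\sup_{x\neq y}|u(x)-u(y)|/|x-y|^\sigma$ for $\sigma\in(0,1)$. Fixing $x\neq y$ and writing $r=|x-y|$, I would bound $|u(x)-u(y)|/r^\theta$ simultaneously by $[u]_\gamma\,r^{\gamma-\theta}$ and by $2\|u\|_{C^0}\,r^{-\theta}$. The first bound is increasing in $r$ and the second is decreasing, so their minimum is largest at the value $r_0=(2\|u\|_{C^0}/[u]_\gamma)^{1/\gamma}$ where they agree, and there it equals $[u]_\gamma^{\theta/\gamma}(2\|u\|_{C^0})^{1-\theta/\gamma}$; this bound therefore holds for every pair $x,y$, giving $[u]_\theta\le 2\,[u]_\gamma^{\theta/\gamma}\|u\|_{C^0}^{1-\theta/\gamma}$. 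Adding $\|u\|_{C^0}=\|u\|_{C^0}^{\theta/\gamma}\|u\|_{C^0}^{1-\theta/\gamma}\le\|u\|_{C^\gamma}^{\theta/\gamma}\|u\|_{C^0}^{1-\theta/\gamma}$ and using $[u]_\gamma\le\|u\|_{C^\gamma}$ yields \eqref{eq:Interp.} with $c=3$ in this case; note that no assumption on $\Omega$ intervenes here, as only Euclidean difference quotients are involved.

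For the general case, write $\gamma=k+\beta$, $\theta=j+\alpha$ with $k\ge j\ge 0$ integers and $\alpha,\beta\in(0,1]$. The second ingredient is the derivative interpolation inequality: along any segment contained in $\bar\Omega$, a second-order Taylor expansion gives, for all $h>0$, $\|\nabla u\|_{C^0}\le Ch\,\|D^2u\|_{C^0}+Ch^{-1}\|u\|_{C^0}$; the uniform regularity of $\partial\Omega$ (which forces a uniform interior cone condition) ensures that for every point and every direction such a segment of uniform length is available, possibly after replacing a direction by a nearby one, so optimizing in $h$ gives $\|\nabla u\|_{C^0}\le C\|u\|_{C^2}^{1/2}\|u\|_{C^0}^{1/2}$ and, iterating, $\|u\|_{C^m}\le C\|u\|_{C^n}^{m/n}\|u\|_{C^0}^{1-m/n}$ for all integers $m<n$. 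Applying this, together with the fractional estimate of the previous step on the top-order derivatives, one obtains the $\varepsilon$-form $\|u\|_{C^\theta}\le\varepsilon\|u\|_{C^\gamma}+C(\varepsilon)\|u\|_{C^0}$ valid for every $\varepsilon\in(0,1)$, and a final optimization $\varepsilon\sim(\|u\|_{C^0}/\|u\|_{C^\gamma})^{(\gamma-\theta)/\gamma}$ converts this into \eqref{eq:Interp.}.

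The step I expect to be the most delicate is the bookkeeping in this last reduction: one must track the exponents through the chain of derivative interpolations and the fractional estimate and verify that they recombine to precisely $\theta/\gamma$ and $1-\theta/\gamma$, equivalently that $C(\varepsilon)$ has the sharp order $\varepsilon^{-\theta/(\gamma-\theta)}$; a clean way to organize this is to prove, by induction on $\lceil\gamma\rceil$, a family of $\varepsilon$-inequalities with this sharp dependence, invoking at each step only the base case $C^1\hookrightarrow(C^0,C^2)$ and the lower-order cases already established. The treatment of the domain near $\partial\Omega$ (segments replaced by uniformly controlled broken paths, using the cone condition) is the only place where the hypothesis on $\partial\Omega$ is genuinely used, and it can be avoided altogether by working with intrinsic Hölder norms, which are equivalent to the Euclidean ones under the stated assumptions. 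For the sole application in this paper, namely $u=\nabla\varphi_k$, $\theta=1$, $\gamma=1+\alpha$ on a uniformly convex $\Omega$, everything collapses to one first-order Taylor expansion with Hölder remainder along a segment of $\bar\Omega$: for $x\in\bar\Omega$, a unit vector $e$ and small $h>0$ with $x+he\in\bar\Omega$, one has $|\partial_e u(x)|\le 2h^{-1}\|u\|_{C^0}+(1+\alpha)^{-1}[\nabla u]_\alpha\,h^\alpha$, and optimizing in $h$ gives $\|\nabla u\|_{C^0}\le C[\nabla u]_\alpha^{1/(1+\alpha)}\|u\|_{C^0}^{\alpha/(1+\alpha)}$, whence \eqref{eq:Interp.}; convexity makes such a segment available in every direction, so no path surgery is needed.
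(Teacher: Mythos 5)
Your argument is correct, but it follows a genuinely different route from the paper, for the simple reason that the paper offers no proof at all: it states the theorem and refers to \cite[Proposition 1.1.3]{A.L} for the proof. What you provide instead is a self-contained elementary derivation, and its two ingredients are sound: the fractional case by taking, for each pair $x\neq y$, the minimum of the two bounds $[u]_\gamma r^{\gamma-\theta}$ and $2\|u\|_{C^0}r^{-\theta}$ and observing that the supremum of this minimum is attained where the two cross; and the integer/mixed case by Taylor expansion along segments, reduced to the domain geometry through a uniform cone condition. Your closing observation is also the right one for this paper: the only instance actually used is $u=\nabla\varphi_k$, $\theta=1$, $\gamma=1+\alpha$ on a uniformly convex domain, where a single first-order Taylor expansion with H\"older remainder along a segment gives $\|\nabla u\|_{C^0}\le C[\nabla u]_\alpha^{1/(1+\alpha)}\|u\|_{C^0}^{\alpha/(1+\alpha)}$, which is all that Proposition \ref{propphik} needs. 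One small point worth making explicit in that computation: the optimizing value of $h$ may exceed the length of the available segment (e.g.\ when $\|u\|_{C^0}$ is large relative to $[\nabla u]_\alpha$); in that regime one takes $h$ equal to the maximal admissible length, obtaining $\|\nabla u\|_{C^0}\le C\|u\|_{C^0}\le C\|u\|_{C^\gamma}^{\theta/\gamma}\|u\|_{C^0}^{1-\theta/\gamma}$, so \eqref{eq:Interp.} still holds with a constant depending on $\bar\Omega$ --- which is consistent with the dependence announced in the statement. With that caveat spelled out, your proof is complete for the case used here, and your sketch of the general bookkeeping (the $\varepsilon$-form with sharp constant $C(\varepsilon)\sim\varepsilon^{-\theta/(\gamma-\theta)}$, proved by induction on $\lceil\gamma\rceil$) is exactly the organization one finds in Lunardi's book; the only thing your version buys over the citation is self-containedness, at the cost of the general-case details you have only outlined.
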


\section{Strong convergence for second derivatives}
In this section we are going to prove that the approximate solution which comes from the JKO scheme \eqref{Sequence} convergences strongly to the solution of Fokker-Planck equation in higher order Sobolev spaces.

We assume $\Omega$ is a bounded and uniformly convex domain given by
\begin{equation}\label{omega}
\Omega=\{x \in \mathbb{R}^d: h(x)<0\} \text{  and  }  \partial \Omega=\{x \in \mathbb{R}^d: h(x)=0\},
\end{equation}
where $h $  is a uniformly convex function of $ C^2(\mathbb{R}^d)$ such that $D^2h\geq c Id$ for some positive constant $c.$  We also assume that $0\in h(\mathbb{R}^d) $ is a regular  value of $h,$ i.e. $|\nabla h(x)|\neq 0 $ for all $x \in \{x  \in \mathbb{R}^d: h(x)=0\}.$  Then, it is easy to show that the boundary $\partial \Omega$ is a  regular surface of 
class $C^2$ and its exterior unit  normal vector $\vec{n}$ at $x \in \partial \Omega$ 
  is defined by
\begin{equation*}
\vec{n}(x)=\dfrac{\nabla h(x)}{|\nabla h(x)|}.
 \end{equation*} 

 Let $\{\varrho_k^{\tau}\}_{k=0}^{N}$   be the sequence  defined in the JKO scheme (\ref{Sequence}) and $\varrho^{\tau}_t$ be obtained from it by piecewise constant interpolation.
 
  %
%
We  consider the integral
\begin{equation}
F_2(\varrho^{\tau}_t)=\frac12\int_{\Omega}\left|\frac{\nabla \varrho_t^{\tau}(x)}{\varrho_t^{\tau}(x)}+\nabla V(x)\right|^2 \varrho_t^{\tau}(x) \,dx.
\end{equation}
and we will prove that the approximate curve constructed in the JKO scheme satisfies a discrete analogue of what is shown in Lemma \ref{lem:Fisher F-P}. To get our purpose we mention here the trace theorem in the Sobolev spaces and some properties of Hausdorff measures.
\begin{theorem}[{\textbf{Trace Theorem}}]\label{th:trace theor}
Assume  $p \in [1,+\infty),$ $\Omega$ is a bounded open set of $\mathbb{R}^d$  and $\partial \Omega $  is $C^1.$  Then there exists a bounded operator 
$$
\mathrm{Tr}: W^{1,p}(\Omega) \to L^{p}(\partial \Omega)
$$
such that 

$(i)$ $\mathrm{Tr}u=u\mid_{\partial \Omega}$ for all $u \in W^{1,p}(\Omega)\cap C(\bar{\Omega}).$

$(ii)$ $$
||\mathrm{Tr}u||_{ L^{p}(\partial \Omega)}\leq C||u||_{W^{1,p}(\Omega)}
$$
for each $u \in W^{1,p}(\Omega),$ with constant $C$ depending on only $p$ and $\Omega.$
\end{theorem}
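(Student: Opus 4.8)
The plan is to follow the classical localization-and-density argument. First I would establish the inequality in $(ii)$ for functions $u\in C^{1}(\bar\Omega)$, and then extend the operator to all of $W^{1,p}(\Omega)$ by density, relying on the standard fact that $C^{\infty}(\bar\Omega)$ is dense in $W^{1,p}(\Omega)$ whenever $\partial\Omega$ is $C^{1}$ (global approximation by functions smooth up to the boundary), which I would quote from a standard reference.

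For the smooth estimate, the crucial ingredient is a local computation after flattening the boundary. Since $\partial\Omega$ is compact and of class $C^{1}$, I would cover it by finitely many open balls $B_{1},\dots,B_{M}$ such that on each $B_{i}$ there is a $C^{1}$ diffeomorphism $\Phi_{i}$ straightening $\partial\Omega\cap B_{i}$ onto a piece of the hyperplane $\{y_{d}=0\}$, with $\Omega\cap B_{i}$ mapped into $\{y_{d}>0\}$, together with a subordinate partition of unity $\{\zeta_{i}\}$ (plus one extra cutoff supported in the interior) so that $\sum_{i}\zeta_{i}\equiv 1$ near $\bar\Omega$. It then suffices to bound $\|\zeta_{i}u\|_{L^{p}(\partial\Omega)}$ for each $i$. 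After the change of variables — whose Jacobian and its inverse are bounded since $\Phi_{i}$ is $C^{1}$ on a compact set, and which distorts the surface measure only by a bounded factor — I reduce to the model situation of a function $v\in C^{1}$ supported in a half-ball $B^{+}=B\cap\{y_{d}>0\}$, for which the fundamental theorem of calculus gives $|v(y',0)|^{p} = -\int_{0}^{\infty}\partial_{y_{d}}\bigl(|v(y',y_{d})|^{p}\bigr)\,dy_{d} \le p\int_{0}^{\infty}|v|^{p-1}|\partial_{y_{d}}v|\,dy_{d}$. Integrating in $y'$ and using Young's inequality $p\,|v|^{p-1}|\partial_{y_{d}}v|\le (p-1)|v|^{p}+|\partial_{y_{d}}v|^{p}$ yields $\int_{\{y_{d}=0\}}|v|^{p}\,dy' \le C\int_{B^{+}}\bigl(|v|^{p}+|\nabla v|^{p}\bigr)\,dy$. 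Summing over $i$ and undoing the changes of variables gives $\|u\|_{L^{p}(\partial\Omega)}\le C\|u\|_{W^{1,p}(\Omega)}$ for every $u\in C^{1}(\bar\Omega)$.

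With this estimate in hand, given an arbitrary $u\in W^{1,p}(\Omega)$ I choose $u_{n}\in C^{\infty}(\bar\Omega)$ with $u_{n}\to u$ in $W^{1,p}(\Omega)$; applying the estimate to $u_{n}-u_{m}$ shows $(u_{n}|_{\partial\Omega})$ is Cauchy in $L^{p}(\partial\Omega)$, and I define $\mathrm{Tr}\,u$ to be its limit. The limit is independent of the approximating sequence (again by the estimate applied to the difference of two such sequences), so $\mathrm{Tr}$ is a well-defined linear operator, and passing to the limit in the inequality preserves $(ii)$. For $(i)$, if $u\in W^{1,p}(\Omega)\cap C(\bar\Omega)$ one can arrange the approximating sequence to converge to $u$ also uniformly on $\bar\Omega$ (from the interior-extension-and-mollification construction, possibly followed by a further uniform approximation), and then $\mathrm{Tr}\,u_{n}=u_{n}|_{\partial\Omega}\to u|_{\partial\Omega}$ both uniformly and in $L^{p}(\partial\Omega)$, which forces $\mathrm{Tr}\,u=u|_{\partial\Omega}$.

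The step I expect to demand the most care is the boundary-flattening plus partition-of-unity bookkeeping — verifying that the Jacobians of the $\Phi_{i}$, the surface-measure distortion, and the cutoffs all stay controlled (this is where the $C^{1}$ regularity of $\partial\Omega$ is genuinely used) — together with the subtlety that the trace must be shown independent of the chosen approximating sequence, which is exactly what upgrades the construction from a map on a dense subspace to a bounded operator on $W^{1,p}(\Omega)$.
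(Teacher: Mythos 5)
Your proof is correct and is the standard localization--flattening--density argument; the paper itself gives no proof of this statement, only the citation to \cite[Section 18]{Leoni}, and your argument is essentially the one found in that reference (and in Evans). The only micro-point to watch is the differentiation of $|v|^{p}$ at zeros of $v$ when $p$ is close to $1$, which is handled by the usual regularization $\sqrt{v^{2}+\varepsilon^{2}}$ and does not affect the conclusion.
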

\begin{proof}
See \cite[section 18]{Leoni}.
\end{proof}
\begin{proposition}\label{pr:Hausdorff meas.}
Let $\mathcal{H}^k$ be the $k-$dimensional Hausdorff measure on a metric space $(X,d).$ If $Y\subset X$ is any set and $f,g\colon Y\to X$ satisfy $d(f(y),f(z))\leq C d(g(y),g(z))$ for all $y,z \in Y,$ then $\mathcal{H}^k(f(A))\leq C^k\mathcal{H}^k(g(A))$ for all $A\subset Y.$ 
\end{proposition}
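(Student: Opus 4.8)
The plan is to argue directly from the definition of the Hausdorff measure via economical covers, transferring a cover of $g(A)$ into a cover of $f(A)$ using the metric inequality between $f$ and $g$. Recall that for $\delta>0$ and $E\subset X$ one sets
\[
\mathcal{H}^k_\delta(E)=\inf\Big\{\sum_i (\mathrm{diam}\,E_i)^k \ :\ E\subset\bigcup_i E_i,\ \mathrm{diam}\,E_i\le\delta\Big\}
\]
(up to the usual normalization constant, which plays no role here since it multiplies both sides of the desired inequality), and $\mathcal{H}^k(E)=\lim_{\delta\to0}\mathcal{H}^k_\delta(E)=\sup_{\delta>0}\mathcal{H}^k_\delta(E)$.

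First I would dispose of the trivial cases: if $\mathcal{H}^k(g(A))=+\infty$ there is nothing to prove, and if $A=\emptyset$ then $f(A)=\emptyset$. Otherwise, fix $\delta>0$ and let $\{E_i\}_i$ be any countable cover of $g(A)$ with $\mathrm{diam}\,E_i\le\delta$. Set $A_i:=A\cap g^{-1}(E_i)\subset Y$; then $A=\bigcup_i A_i$, hence $f(A)=\bigcup_i f(A_i)$, so $\{f(A_i)\}_i$ is a countable cover of $f(A)$. The key step is the diameter estimate: for $y,z\in A_i$ we have $g(y),g(z)\in E_i$, so by hypothesis
\[
d(f(y),f(z))\le C\,d(g(y),g(z))\le C\,\mathrm{diam}\,E_i,
\]
and therefore $\mathrm{diam}\,f(A_i)\le C\,\mathrm{diam}\,E_i\le C\delta$. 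Thus $\{f(A_i)\}_i$ is an admissible cover of $f(A)$ for the computation of $\mathcal{H}^k_{C\delta}$, giving
\[
\mathcal{H}^k_{C\delta}(f(A))\le \sum_i (\mathrm{diam}\,f(A_i))^k\le C^k\sum_i(\mathrm{diam}\,E_i)^k.
\]
Taking the infimum over all such covers $\{E_i\}$ of $g(A)$ yields $\mathcal{H}^k_{C\delta}(f(A))\le C^k\,\mathcal{H}^k_\delta(g(A))\le C^k\,\mathcal{H}^k(g(A))$, and letting $\delta\to 0$ (so that also $C\delta\to0$) gives $\mathcal{H}^k(f(A))\le C^k\,\mathcal{H}^k(g(A))$, as claimed.

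I do not expect any genuine obstacle here: this is a routine metric-measure-theory fact, and the only points requiring a moment of care are bookkeeping ones — checking that the normalization constant cancels, handling the cases of infinite measure and of the empty set, and noting that degenerate covering sets ($\mathrm{diam}\,E_i=0$) cause no problem, since then $f(A_i)$ is again a single point because the contraction inequality gives $d(f(y),f(z))\le C\cdot 0=0$. It is worth observing that the argument uses neither continuity nor measurability of $f$ or $g$, nor any completeness of $(X,d)$; only the pointwise metric inequality is needed, which is exactly the form in which it will later be applied (with $g$ the optimal map and $f$ a nearby Lipschitz perturbation, to control the Hausdorff measure of the image of pieces of $\partial\Omega$).
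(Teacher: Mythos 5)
Your proof is correct and complete; the paper itself offers no argument here, simply citing \cite[Proposition 11.18]{Folland}, and your cover-transfer argument (pulling back a $\delta$-cover of $g(A)$ via $g$, pushing it forward through $f$, and using the pointwise inequality to bound diameters by a factor $C$) is precisely the standard proof of that cited result. The edge cases you flag (infinite measure, empty set, degenerate covering sets) are handled correctly, and your observation that no continuity or measurability of $f,g$ is needed is accurate.
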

\begin{proof}
See \cite[Proposition 11.18]{Folland}.
\end{proof}
In the following computations,  $\varepsilon(\tau)$  denotes any quantity which depends only on $\bar{\Omega}, \varrho_0, V $ and $\tau$ such that $\varepsilon(\tau) \to 0$ as $\tau \to 0.$
\begin{lemma}\label{lem:Fisher JKO} Under the assumptions in Proposition \ref{propphik}
and the above assumptions on $\Omega$ we have
 $$F_2(\varrho_0)-F_2(\varrho_T^{\tau})\geq 
 \int_0^T \int_{\Omega} |D^2(\log{\varrho_t^{\tau}(x)}+V(x))|^2 \varrho_t^{\tau}(x)\,dxdt+
 $$ 
$$
 \int_{0}^T\int_{\Omega} (\nabla \log{\varrho_t^{\tau}(x)}+\nabla V(x))^T \cdot D^2V(x)\cdot(\nabla \log{\varrho_t^{\tau}(x)}+\nabla V(x)) \varrho_t^{\tau}(x)\,dxdt
 +
 $$
  \begin{equation}\label{eq:Fisher for rho_k}
 \int_0^T\int_{\partial \Omega}\frac{\varrho_t^{\tau}(x)}{|\nabla h(x)|}(\nabla \log{\varrho_t^{\tau}(x)}+\nabla V(x))^T\cdot D^2 h(x)\cdot (\nabla \log{\varrho_{t}^{\tau}(x)}+\nabla V(x))\,d\mathcal{H}^{d-1}dt+\varepsilon(\tau).
\end{equation}
\end{lemma}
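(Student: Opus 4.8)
The plan is to apply the five‑gradients estimate of Proposition \ref{boundsFp} with the radially symmetric convex function $H(z)=\tfrac12|z|^2$ (so that $\int_\Omega H(\nabla(\log\varrho^\tau_k+V))\,d\varrho^\tau_k=F_2(\varrho^\tau_k)$, $\nabla H(z)=z$ and $D^2H\equiv I$) at every step of the scheme and to sum over $k=0,\dots,N-1$. Since the left‑hand side telescopes, this gives
\[
F_2(\varrho_0)-F_2(\varrho^\tau_T)\ \geq\ \sum_{k=0}^{N-1}\left[\int_\Omega\frac{\nabla\varphi_k}{\tau}\cdot(\nabla V-\nabla V\circ T_k)\,d\varrho^\tau_{k+1}+R^{\mathrm{int}}_k+R^{\mathrm{bd}}_k\right],
\]
where $R^{\mathrm{int}}_k=\tfrac1\tau\int_\Omega\varrho^\tau_{k+1}\,\mathrm{Tr}\{(D^2\varphi_k)^2(I-D^2\varphi_k)^{-1}\}\,dx\geq0$ and $R^{\mathrm{bd}}_k=\int_{\partial\Omega}\big(\varrho^\tau_{k+1}\tfrac{\nabla\varphi_k}{\tau}\cdot\vec n+\varrho^\tau_k\tfrac{\nabla\psi_k}{\tau}\cdot\vec n\big)\,d\mathcal H^{d-1}\geq0$ are the interior and boundary remainders of Lemma \ref{5GI}. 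The whole proof consists in identifying, term by term, the transport term with the $D^2V$‑integral, $\sum_kR^{\mathrm{int}}_k$ with the $|D^2(\log\varrho^\tau+V)|^2\varrho^\tau$‑integral and $\sum_kR^{\mathrm{bd}}_k$ with the boundary $D^2h$‑integral, each up to $\varepsilon(\tau)$; then one reads off the claimed inequality via the piecewise constant interpolation $\int_0^Tg(\varrho^\tau_t)\,dt=\sum_k\tau g(\varrho^\tau_{k+1})$. The algebraic backbone is the optimality condition \eqref{Optimality condition}, which gives $\nabla(\log\varrho^\tau_{k+1}+V)=-\nabla\varphi_k/\tau$ and, differentiating once more (legitimate since $\varphi_k\in C^{2+\alpha}(\bar\Omega)$ and $V\in C^2(\bar\Omega)$ imply $\log\varrho^\tau_{k+1}\in C^2(\bar\Omega)$), $D^2(\log\varrho^\tau_{k+1}+V)=-D^2\varphi_k/\tau$; together with the uniform estimates of Proposition \ref{propphik} — $\|\nabla\varphi_k\|_\infty\leq C\tau^{1/(d+2)}$, $\|D^2\varphi_k\|\leq C\tau^\beta$, the uniform $C^{0,\alpha}$ and two‑sided bounds on $\varrho^\tau_k$ — and the same bounds for the inverse potentials $\psi_k$, which hold by the symmetric argument (e.g. $\|\nabla\psi_k\|_\infty\leq C\tau^{1/(d+2)}$ by the estimate of \cite{Bouchitte} applied to $S_k=id-\nabla\psi_k$).

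For the transport term I would write $\nabla V-\nabla V\circ T_k=M_k\,\nabla\varphi_k$ with $M_k(x)=\int_0^1D^2V(x-s\nabla\varphi_k(x))\,ds$ and substitute $\nabla\varphi_k=-\tau\nabla(\log\varrho^\tau_{k+1}+V)$, so that the $k$‑th term becomes $\tau\int_\Omega(\nabla(\log\varrho^\tau_{k+1}+V))^TM_k\,(\nabla(\log\varrho^\tau_{k+1}+V))\,d\varrho^\tau_{k+1}$. Since $\|\nabla\varphi_k\|_\infty\to0$ and $D^2V$ is uniformly continuous on $\bar\Omega$, $M_k\to D^2V$ uniformly, and replacing $M_k$ by $D^2V$ costs at most $\tau\|M_k-D^2V\|_\infty\,2F_2(\varrho^\tau_{k+1})$ per step, which sums to $\varepsilon(\tau)$ because $\sum_k\tau F_2(\varrho^\tau_{k+1})\leq C$ (consequence of the $L^2H^1$ bound of Proposition \ref{p^t bounds} and the two‑sided bound on $\varrho^\tau$). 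For $R^{\mathrm{int}}_k$, the substitution $D^2\varphi_k=-\tau D^2(\log\varrho^\tau_{k+1}+V)$ turns it into $\tau\int_\Omega\varrho^\tau_{k+1}\,\mathrm{Tr}\{(D^2(\log\varrho^\tau_{k+1}+V))^2(I-D^2\varphi_k)^{-1}\}\,dx$, and since $\|D^2\varphi_k\|\leq C\tau^\beta$ one has $(I-D^2\varphi_k)^{-1}\succeq(1-C\tau^\beta)I$, hence this is $\geq(1-C\tau^\beta)\tau\int_\Omega\varrho^\tau_{k+1}|D^2(\log\varrho^\tau_{k+1}+V)|^2\,dx$. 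A short bootstrap closes the loop: $F_2(\varrho_0)-F_2(\varrho^\tau_T)$ is bounded, $F_2(\varrho^\tau_T)\geq0$, the transport sum is bounded, and every $R^{\mathrm{int}}_k,R^{\mathrm{bd}}_k\geq0$, so $\sum_k(R^{\mathrm{int}}_k+R^{\mathrm{bd}}_k)$ is uniformly bounded; therefore $\int_0^T\!\!\int_\Omega|D^2(\log\varrho^\tau_t+V)|^2\varrho^\tau_t\,dx\,dt\leq C$, and consequently $\sum_kR^{\mathrm{int}}_k\geq\int_0^T\!\!\int_\Omega|D^2(\log\varrho^\tau_t+V)|^2\varrho^\tau_t\,dx\,dt-\varepsilon(\tau)$.

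The delicate part — and the main obstacle — is $\sum_kR^{\mathrm{bd}}_k$, which must be shown to dominate $\int_0^T\!\!\int_{\partial\Omega}\tfrac{\varrho^\tau_t}{|\nabla h|}(\nabla\log\varrho^\tau_t+\nabla V)^TD^2h(\nabla\log\varrho^\tau_t+\nabla V)\,d\mathcal H^{d-1}dt$ up to $\varepsilon(\tau)$. For $x\in\partial\Omega$ one has $h(T_k(x))\leq0=h(x)$, and Taylor's formula with integral remainder along the segment $[x,T_k(x)]$, combined with the modulus of continuity of $D^2h$ at scale $\|\nabla\varphi_k\|_\infty$, gives $\nabla\varphi_k(x)\cdot\vec n(x)\geq\frac{1}{2|\nabla h(x)|}\nabla\varphi_k(x)^TD^2h(x)\nabla\varphi_k(x)-\frac{o(1)}{|\nabla h(x)|}|\nabla\varphi_k(x)|^2$ — note the genuine factor $\tfrac12$ — and likewise for $\psi_k$ along $[x,S_k(x)]$. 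The key point is that the $\psi_k$‑half then equals the $\varphi_k$‑half up to errors: using $\nabla\psi_k=-\nabla\varphi_k\circ S_k$, the change‑of‑variables identity $\varrho^\tau_k\circ T_k=\varrho^\tau_{k+1}/\det(I-D^2\varphi_k)=\varrho^\tau_{k+1}(1+O(\tau^\beta))$, the fact that (since $\varphi_k\in C^{2+\alpha}(\bar\Omega)$ and $I-D^2\varphi_k\succ0$) $T_k$ restricts to a diffeomorphism of $\partial\Omega$ with tangential Jacobian $1+O(\tau^\beta)$, and the continuity of $\nabla h,D^2h$, a change of variables $y=T_k(x)$ in the $\psi_k$‑integral converts it into the $\varphi_k$‑integral modulo errors, so the two halves add up to the full target boundary density $\tau\int_{\partial\Omega}\tfrac{\varrho^\tau_{k+1}}{|\nabla h|}(\nabla\log\varrho^\tau_{k+1}+\nabla V)^TD^2h(\nabla\log\varrho^\tau_{k+1}+\nabla V)\,d\mathcal H^{d-1}$. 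All the accumulated errors are of the form $o(1)\,\tau\int_{\partial\Omega}|\nabla(\log\varrho^\tau_{k+1}+V)|^2$ plus negligible positive powers of $\tau$, and they sum to $\varepsilon(\tau)$ once we know $\int_0^T\!\!\int_{\partial\Omega}|\nabla\log\varrho^\tau_t+\nabla V|^2\,d\mathcal H^{d-1}dt\leq C$; this last bound comes for free from $R^{\mathrm{bd}}_k$ itself, since the uniform convexity $D^2h\geq cId$ yields $\nabla\varphi_k\cdot\vec n\geq\frac{c}{2|\nabla h|}|\nabla\varphi_k|^2$ and hence $R^{\mathrm{bd}}_k\geq c'\tau\int_{\partial\Omega}|\nabla(\log\varrho^\tau_{k+1}+V)|^2$, while $\sum_kR^{\mathrm{bd}}_k$ was already bounded above. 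Assembling the three pieces and collecting all errors into a single $\varepsilon(\tau)$ yields the statement. The hardest bookkeeping, as indicated, is in the boundary term: keeping the factor $\tfrac12$ honest, matching the two Kantorovich potentials by a change of variables on $\partial\Omega$, and controlling every remainder uniformly in $k$ through the a priori bounds above.
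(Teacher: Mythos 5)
Your proposal follows essentially the same route as the paper's proof: telescoping the five-gradients inequality of Proposition \ref{boundsFp} with $H(z)=\tfrac12|z|^2$, converting the transport term and the interior remainder via the optimality condition $\nabla\varphi_k=-\tau\nabla(\log\varrho^\tau_{k+1}+V)$ together with the uniform estimates of Proposition \ref{propphik}, and recovering the boundary term by the Taylor expansion of $h$ along the displacement plus a change of variables matching the $\psi_k$-half with the $\varphi_k$-half. The only (harmless) deviation is that you extract the needed uniform bound on $\int_0^T\!\!\int_{\partial\Omega}|\nabla(\log\varrho^\tau_t+V)|^2\,d\mathcal H^{d-1}dt$ from the boundary remainder itself via the uniform convexity $D^2h\geq cI$, whereas the paper obtains it from the trace theorem applied to the $L^2_tH^2_x$ bound on $\log\varrho^\tau+V$; both arguments are valid.
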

\begin{proof}
  Let us apply Proposition \ref{boundsFp} for $\varrho_{k-1}^{\tau}$ and $\varrho_{k}^{\tau}$  $(k \in\{1,...,N\})$ in the case of $H(z)=\frac12 |z|^2,$ then
  \begin{equation*}
  \frac{1}{2}\int_{\Omega} \left|\frac{\nabla{\varrho_{k-1}^{\tau}}}{\varrho_{k-1}^{\tau}} + \nabla V\right|^2 d\varrho_{k-1}^{\tau} -\frac{1}{2} \int_{\Omega} \left|\frac{\nabla{\varrho_{k}^{\tau}}}{\varrho_{k}^{\tau}} + \nabla V\right|^2 d\varrho_{k}^{\tau} \geq 
\end{equation*}
\begin{equation} \label{eq:Ineq1}
  \frac{1}{\tau}\int_{\Omega} \nabla \varphi_k \cdot ( \nabla V-\nabla V \circ T_k) \,d\varrho_k^{\tau}+\frac{1}{\tau}\int_{\Omega} \text{tr} [( D^2\varphi_k)^2 \cdot (Id-D^2 \varphi_k)^{-1}] \,d\varrho_k^{\tau}+
\end{equation}
\begin{equation*}  
  \frac{1}{\tau}\int_{\partial \Omega} \left(\varrho_k^{\tau} \nabla \varphi_k \cdot \vec{n}+\varrho_{k-1}^{\tau} \nabla \psi_k \cdot \vec{n}\right)\,d\mathcal{H}^{d-1}.
  \end{equation*}
 If we sum the inequality above with respect to $k,$ we have 
\begin{equation}\label{eq:Ineq2} 
 F_2(\varrho_0)-F_2(\varrho_T^{\tau})\geq \frac{1}{\tau} \sum_{k=1}^N\int_{\Omega} \nabla \varphi_k \cdot ( \nabla V-\nabla V \circ T_k)\, d\varrho_k^{\tau}+
\end{equation}
\begin{equation*}  
  \frac{1}{\tau} \sum_{k=1}^N\int_{\Omega} \text{tr} [( D^2\varphi_k)^2 \cdot (Id-D^2 \varphi_k)^{-1}]\, d\varrho_k^{\tau}+\frac{1}{\tau} \sum_{k=1}^N \int_{\partial \Omega} \left(\varrho_k^{\tau} \nabla \varphi_k \cdot \vec{n}+\varrho_{k-1}^{\tau} \nabla \psi_k \cdot \vec{n}\right)\,d\mathcal{H}^{d-1}.
  \end{equation*} 
 
  We consider each terms of the inequalities (\ref{eq:Ineq1}) and (\ref{eq:Ineq2}). First, the equality  $T_k=id-\nabla \varphi_k$ and the optimality condition \eqref{Optimality condition} imply
  $$
  \frac{1}{\tau}\nabla \varphi_k \cdot ( \nabla V-\nabla V\circ T_k)=\frac{1}{\tau}\nabla \varphi_k^T \cdot  D^2V\circ \xi \cdot \nabla \varphi_k,$$
  where $\xi(x)$ is, for every point $x$, a suitable point on the line segment connecting $x$ and $T(x)$, obtained from the Taylor expansion of $V$. This can be re-written as
\begin{eqnarray*}
  \frac{1}{\tau}\nabla \varphi_k \cdot ( \nabla V-\nabla V\circ T)&=&\tau  \nabla(\log{\varrho_k^{\tau}}+V)^T\cdot D^2V\cdot \nabla(\log{\varrho_k^{\tau}}+V) \\
  &&+
\tau\nabla(\log{\varrho_k^{\tau}}+V)^T (D^2V\circ \xi-D^2V)\cdot\nabla(\log{\varrho_k^{\tau}}+V).
\end{eqnarray*}
Since $\xi $ satisfies 
    $$
  |\xi-x|\leq|T(x)-x|=|\nabla \varphi_k(x)|\leq ||\nabla \varphi_k||_{L^{\infty}(\Omega)} \leq C \, {\tau}^{\frac{1}{2+d}},
  $$
   the uniform continuity of $D^2V$  implies  
   $
   |D^2V\circ\xi -D^2V|=\varepsilon(\tau),
   $ while the uniform bounds on $||\log{\varrho_k^{\tau}}+V||_{W^{1,p}}$  with respect to $k$ and $\tau$  show that
  $$\frac{1}{\tau}\sum_{k=1}^N\int_{\Omega} \nabla \varphi_k \cdot ( \nabla V-\nabla V \circ T_k)\, d\varrho_k^{\tau}= $$
   $$  
  {\tau}\sum_{k=1}^N\int_{\Omega} (\nabla \log{\varrho_k^{\tau}(x)}+\nabla V(x))^T \cdot D^2V(x)\cdot(\nabla \log{\varrho_k^{\tau}(x)}+\nabla V(x)) \varrho_k^{\tau}(x)\,dx
 +\varepsilon(\tau) =$$
   \begin{equation}\label{eq:V part}  
  \int_{0}^T\int_{\Omega} (\nabla \log{\varrho_t^{\tau}(x)}+\nabla V(x))^T \cdot D^2V(x)\cdot(\nabla \log{\varrho_t^{\tau}(x)}+\nabla V(x)) \varrho_t^{\tau}(x)\,dxdt
 +\varepsilon(\tau). 
 \end{equation}
Let us now consider the matrix $D^2 \varphi_k$. Point 5 in Proposition \ref{propphik} insures that $D^2 \varphi_k$ tends uniformly to zero as $\tau\to 0$. This enables  us to write
\begin{eqnarray*}
\frac{1}{\tau}\int_{\Omega} \text{tr} [( D^2\varphi_k)^2 \cdot (Id-D^2 \varphi_k)^{-1}]\, d\varrho_k^{\tau} & = &\frac{(1+\varepsilon(\tau))}{\tau}\int_{\Omega} |D^2 \varphi_k|^2 \,d\varrho_k^{\tau}\notag\\ &=&(1+\varepsilon(\tau)) {\tau}\int_{\Omega} |D^2(\log{\varrho_k^{\tau}}+V)|^2 \,d\varrho_k^{\tau}, 
\end{eqnarray*}
  where we used again the optimality condition (\ref{Optimality condition}). Subsequently, we have
  \begin{equation}\label{eq:trace}
  \frac{1}{\tau} \sum_{k=1}^N\int_{\Omega} \text{tr} [( D^2\varphi_k)^2 \cdot (Id-D^2 \varphi_k)^{-1}] \,d\varrho_k^{\tau}=
(1+\varepsilon(\tau))  \int_0^T \int_{\Omega} |D^2(\log{\varrho_t^{\tau}(x)}+V(x))|^2 \varrho_t^{\tau}(x)\,dxdt.
\end{equation}

  We now concentrate on the boundary integrals. Since the optimal transport map $T_k$ sends $\bar{\Omega} $  to itself,   for  any $x \in \partial \Omega, $ we have  $T_k(x)=x-\nabla \varphi_k(x)\in \bar{\Omega}.$ Thus,
  $$
  0\geq h(x-\nabla \varphi_k(x))=h(x)-\nabla h(x)\cdot \nabla \varphi_k(x)+\frac{1}{2}(\nabla \varphi_k(x))^T\cdot D^2 h(\zeta(x))\cdot \nabla \varphi_k(x)
  $$
 for some point $\zeta(x) $ lying in the line connecting $x$ and $T_k(x)=x-\nabla \varphi_k(x).$  If we use $h(x)=0 $ and $\nabla h(x)= |\nabla h(x)|\vec{n}(x)$ 
$$
 |\nabla h(x)|\vec{n}(x)\cdot \nabla \varphi_k(x)=\nabla h(x)\cdot \nabla \varphi_k(x)\geq \frac{1}{2}(\nabla \varphi_k(x))^T\cdot D^2 h(\zeta(x))\cdot \nabla \varphi_k(x).
$$  
If we multiply the inequality above by $\frac{\varrho_k^{\tau}(x)}{\tau|\nabla h(x)|}$ and integrate over $\partial\Omega,$ we get \\
 $$
  \frac{1}{\tau}\int_{\partial \Omega}\varrho_k^{\tau}(x)\nabla \varphi_k(x) \cdot \vec{n}(x) \,d\mathcal{H}^{d-1} \geq \frac{1}{2\tau}  
  \int_{\partial \Omega}\frac{\varrho_k^{\tau}(x)}{|\nabla h(x)|} (\nabla \varphi_k(x))^T\cdot D^2 h(\zeta(x))\cdot \nabla \varphi_k(x)\,d\mathcal{H}^{d-1}=
$$
 \begin{equation}\label{eq:boun.int.1}
   \frac{\tau}{2}  
  \int_{\partial \Omega}\frac{\varrho_k^{\tau}(x)}{|\nabla h(x)|}(\nabla \log{\varrho_k^{\tau}(x)}+\nabla V(x))^T\cdot D^2 h(\zeta(x))\cdot (\nabla \log{\varrho_{k}^{\tau}(x)}+\nabla V(x))\,d\mathcal{H}^{d-1}.
\end{equation}
The uniform continuity of $D^2 h$ in $\bar{\Omega}$ and  the uniform estimates
  $$|\zeta(x)-x|\leq |x-T_k(x)|\leq C \, {\tau}^{\frac{1}{2+d}} $$ 
  show that we have
  \begin{equation}\label{eq:ijrho}
D^2h(\zeta(x))=D^2h(x)+\varepsilon(\tau).
  \end{equation}
As a result of (\ref{eq:ijrho}) and the uniform bounds on $\varrho_{k}^{\tau}$, (\ref{eq:boun.int.1}) equals
  $$
 \frac{\tau}{2}  
  \int_{\partial \Omega}\frac{\varrho_k^{\tau}(x)}{|\nabla h(x)|}(\nabla \log{\varrho_k^{\tau}(x)}+\nabla V(x))^T\cdot D^2 h(x)\cdot (\nabla \log{\varrho_{k}^{\tau}(x)}+\nabla V(x))\,d\mathcal{H}^{d-1}+
 $$\begin{equation}\label{eq:1111111111}\tau \varepsilon(\tau)||\nabla \log{\varrho_k^{\tau}}+\nabla V||^2_{L^2(\partial \Omega)}.
\end{equation}

Since the inverse of the optimal transport map $S_k\colonequals T_k^{-1}$ is defined by $ S_k(x)=x-\nabla \psi_k (x) \in \bar{\Omega},$ similar arguments as we have done above  show that  we have
  \begin{equation}\label{eq:bound.int.2}
   \frac{1}{\tau}\int_{\partial \Omega}\varrho_{k-1}^{\tau}(x)\nabla \psi_{k}(x) \cdot \vec{n}(x) d\mathcal{H}^{d-1} \geq  
   \frac{1}{2\tau}\int_{\partial \Omega}\frac{\varrho_{k-1}^{\tau}(x)}{|\nabla h(x)|}(\nabla \psi_{k}(x))^T\cdot D^2 h(\zeta'(x))\cdot \nabla \psi_{k}(x)\,d\mathcal{H}^{d-1}
\end{equation}
 for some point $\zeta'(x) $ lying in the line connecting $x$ and $T_k(x)=x-\nabla \varphi_k(x)$ and satisfying
 \begin{equation}\label{eq:psi'}
 |\zeta'(x)-x|\leq |x-T_k(x)|\leq C \, {\tau}^{\frac{1}{2+d}}.
  \end{equation}  
By the equalities 
$$
-\nabla \psi_k(x)=S_k(x)-x=S_k(x)-T_k(S_k(x))=\nabla \varphi_k(S_k(x)),
$$
 $$\nabla \varphi_k(S_k(x))=-\tau\nabla \log{\varrho_{k}^{\tau}(S_k(x))}-\tau\nabla V(S_k(x))
 $$
  and the Monge-Amp\'ere equation  
  $$
  \text{det}(DS_k(x)) \varrho_{k}^{\tau}(S_k(x))=\varrho_{k-1}^{\tau}(x),
  $$
   the right hand side of (\ref{eq:bound.int.2}) equals  
\begin{equation}\label{eq:bondint22}
 \frac{\tau}{2}\int_{\partial \Omega}\frac{\text{det}(DS_k(x)) \varrho_{k}^{\tau}(S_k(x))}{|\nabla h(x)|}([\nabla \log{\varrho_{k}^{\tau}}+\nabla V ]\circ S_k(x))^T\cdot D^2 h(\zeta'(x))\cdot  ([\nabla \log{\varrho_{k}^{\tau}}+\nabla V ]\circ S_k(x))\,d\mathcal{H}^{d-1}.
 \end{equation}
 Let  $(S_k)_\sharp\mathcal{H}^{d-1}$ denotes the image measure of $S_k$ on $\partial \Omega$ with respect to the Hausdorff measure defined by  $(S_k)_\sharp\mathcal{H}^{d-1}(A)\colonequals\mathcal{H}^{d-1}(S^{-1}_k(A))=\mathcal{H}^{d-1}(T_k(A))$ for all measurabele (w.r.t $\mathcal{H}^{d-1}$) $ A \subset \partial \Omega.$ Our assumptions provide that the maps $T_k, S_k \colon \partial{\Omega} \to \partial{\Omega} $ are homeomorphisms and hence  $(S_k)_\sharp\mathcal{H}^{d-1}$ is well-defined. With the help of
  this image measure, (\ref{eq:bondint22}) can be written  in the following form
\begin{equation}\label{eq:bondint22 with image meas.}
 \frac{\tau}{2}\int_{\partial \Omega}\frac{ \varrho_{k}^{\tau}(x)}{|\nabla h(T_k(x))|}(\nabla \log{\varrho_{k}^{\tau}}(x)+\nabla V(x) )^T\cdot D^2 h(\zeta'(T_k(x)))\cdot  (\nabla \log{\varrho_{k}^{\tau}}(x)+\nabla V(x))\,d(S_k)_{\sharp}\mathcal{H}^{d-1}.
 \end{equation}
On the one hand, the estimate $|T_k(x)-T_k(y)|\leq \mathrm{Lip}(T_k)\,|x-y|$ for all $x, y \in \partial \Omega$  and Theorem \ref{pr:Hausdorff meas.} show that 
\begin{equation*}
(S_k)_{\sharp}\mathcal{H}^{d-1}(A)=\mathcal{H}^{d-1}(T_k(A))\leq (\mathrm{Lip}(T_k))^{d-1}\mathcal{H}^{d-1}(A)
\end{equation*}
for all $A \subset \partial \Omega.$ On the other hand, the estimate $|x-y|=|S_k(T_k(x))-S_k(T_k(y))|\leq \mathrm{Lip}(S_k)\,|T_k(x)-T_k(y)|$ for all  $x, y \in \partial \Omega$  and  again Theorem \ref{pr:Hausdorff meas.} show that 
\begin{equation*}
\mathcal{H}^{d-1}(A)\leq (\mathrm{Lip}(S_k))^{d-1}\mathcal{H}^{d-1}(T_k(A))= (\mathrm{Lip}(S_k))^{d-1}(S_k)_{\sharp}\mathcal{H}^{d-1}(A).
\end{equation*}
  Therefore, we have 
\begin{equation}\label{eq:im.meas and Haus.meas}
\frac{1}{(\mathrm{Lip}(S_k))^{d-1}} \mathcal{H}^{d-1}(A)\leq (S_k)_\sharp\mathcal{H}^{d-1}(A)\leq  (\mathrm{Lip}(T_k))^{d-1}\mathcal{H}^{d-1}(A)
\end{equation}
for all $A \subset \partial \Omega.$  Using $DT_k(x)=I-D^2\varphi_k(x),$ $DS_k(x)=I-D^2\psi_k(x)=(DT_k)^{-1}\circ S_k$  together with $||D^2\varphi_k||=\varepsilon(\tau)$ we have 
\begin{equation}\label{eq:Lip}
\mathrm{Lip}(S_k)=1+\varepsilon(\tau),\, \text{    }\,  \mathrm{Lip}(T_k)=1+\varepsilon(\tau).
\end{equation}
The estimate  (\ref{eq:im.meas and Haus.meas}) and (\ref{eq:Lip}) show that 
\begin{equation}\label{eq:im.meas=Haus.meas}
(S_k)_{\sharp}\mathcal{H}^{d-1}(A)=(1+\varepsilon(\tau)) \,\mathcal{H}^{d-1}(A)
\end{equation}
for all $A \subset \partial \Omega.$
The regularity of $h$ and point 2 in Proposition \ref{propphik} provide
\begin{equation}\label{eq:grad h}
\frac{1}{|\nabla h(T_k(x))|}=\frac{1}{|\nabla h(x)|}\frac{|\nabla h(x)|}{|\nabla h(T_k(x))|}=(1+\varepsilon(\tau)) \frac{1}{|\nabla h(x)|}
\end{equation}
and 
\begin{equation}\label{eq:hh}
D^2h(\zeta'(T_k(x)))=D^2 h(x)+\varepsilon(\tau).
\end{equation}
By considering (\ref{eq:bondint22 with image meas.}), (\ref{eq:im.meas=Haus.meas}), (\ref{eq:grad h}) and (\ref{eq:hh}), the right hand side of (\ref{eq:bound.int.2}) equals 
$$\frac{\tau}{2}\int_{\partial \Omega}\frac{ \varrho_{k}^{\tau}(x)}{|\nabla h(x)|}(\nabla \log{\varrho_{k}^{\tau}}(x)+\nabla V(x) )^T\cdot D^2 h(x)\cdot  (\nabla \log{\varrho_{k}^{\tau}}(x)+\nabla V(x))\,d\mathcal{H}^{d-1}+$$
\begin{equation}\label{eq:bondint22 with epsilon}
 \tau\varepsilon(\tau)||\nabla (\log{\varrho_{k}^{\tau}}+ V)
 ||^2_{L^2(\partial \Omega)}.
 \end{equation}
 
Since $h$ is convex function, the integrals (\ref{eq:boun.int.1}) and (\ref{eq:bound.int.2}) are  positive. The positivity of these integrals and   the estimates  (\ref{eq:V part}) and (\ref{eq:trace}) provide the inequality
$$
F_2(\varrho_0)-F_2(\varrho_T^{\tau})\geq 
 \int_0^T \int_{\Omega} |D^2(\log{\varrho_t^{\tau}(x)}+V(x))|^2 \varrho_t^{\tau}(x)\,dxdt+
 $$ 
$$
 \int_{0}^T\int_{\Omega} (\nabla \log{\varrho_t^{\tau}(x)}+\nabla V(x))^T \cdot D^2V(x)\cdot(\nabla \log{\varrho_t^{\tau}(x)}+\nabla V(x)) \varrho_t^{\tau}(x)\,dxdt
 $$
We proved in the previous sections that $ \varrho^{\tau}$ is bounded in $L^2([0,T];H^1(\Omega))$, which provides a bound on the  last integral term. This, together with the lower bounds on $\rho^\tau$, implies that $\log{\varrho_t^{\tau}}+V$  is uniformly bounded in  $L^2([0,T]; H^2(\Omega))$. Theorem \ref{th:trace theor} lets us to conclude that   $\nabla(\log{\varrho^{\tau}}+V),$  $i \in\{1,...,d\}, $ is uniformly bounded in   $L^2([0,T]; L^2(\partial \Omega)).$ 

Considering  (\ref{eq:boun.int.1}), (\ref{eq:1111111111}),  (\ref{eq:bound.int.2}), (\ref{eq:bondint22 with epsilon}) and the fact that $ \log{\varrho_{t}^{\tau}}+V$ and $\nabla \log{\varrho_{t}^{\tau}}+\nabla V$ are bounded in $L^2([0,T]; H^2(\Omega))$ and $ L^2([0,T]; L^2(\partial \Omega))$ respectively, we conclude that
\begin{equation*}  
  \frac{1}{\tau}\sum_{k=1}^N\int_{\partial \Omega} \left(\varrho_k^{\tau} \nabla \varphi_k \cdot \vec{n}+\varrho_{k-1}^{\tau} \nabla \psi_k \cdot \vec{n}\right)\,d\mathcal{H}^{d-1}\geq 
  \end{equation*}
$$
 \sum_{k=1}^N {\tau}    \int_{\partial \Omega}\frac{\varrho_k^{\tau}(x)}{|\nabla h(x)|}(\nabla \log{\varrho_k^{\tau}(x)}+\nabla V(x))^T\cdot D^2 h(x)\cdot (\nabla \log{\varrho_{k}^{\tau}(x)}+\nabla V(x))\,d\mathcal{H}^{d-1}+\varepsilon(\tau)=
$$
\begin{equation}\label{eq:boudary}
\int_0^T\int_{\partial \Omega}\frac{\varrho_t^{\tau}(x)}{|\nabla h(x)|}(\nabla \log{\varrho_t^{\tau}(x)}+\nabla V(x))^T\cdot D^2 h(x)\cdot (\nabla \log{\varrho_{t}^{\tau}(x)}+\nabla V(x))\,d\mathcal{H}^{d-1}dt+\varepsilon(\tau).
\end{equation}
 The estimates (\ref{eq:V part}), (\ref{eq:trace}) and (\ref{eq:boudary}) give the desired estimate.
\end{proof}

\begin{lemma}\label{LEMMA}
Let $D$ be bounded Lipschitz domain of $\mathbb{R}^d$, $\{u_k\}_{k=1}^{\infty}$ be a sequence in $L^2([0,T];H^2(D))$  and bounded in $L^{\infty}([0,T]\times D).$ If there exists $u\in L^{\infty}([0,T];C^{2}(D))$ such that $u_k \to u$ strongly in  $L^2([0,T];H^2(D)),$  then, for any $f\in C^2(\mathbb{R}),$ $\{f(u_k)\}_{k=1}^{\infty}$ converges to $f(u)$ in $L^2([0,T];H^2(D)).$
\end{lemma}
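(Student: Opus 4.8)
The plan is to compute $\nabla(f(u_k))$ and $D^2(f(u_k))$ via the chain rule and to estimate, term by term, the differences with $\nabla(f(u))$ and $D^2(f(u))$ in $L^2([0,T]\times D)$, organising every remainder as a product of a factor that vanishes in $L^2_tH^2_x$ with a factor that merely stays bounded in $L^\infty$. Set $M$ to be a common bound for $\sup_k\|u_k\|_{L^\infty([0,T]\times D)}$ and $\|u\|_{L^\infty([0,T]\times D)}$, and let $I:=[-M,M]$; since $f\in C^2$, the functions $f,f',f''$ are bounded on $I$ by a constant $C_0$ and share a Lipschitz constant $L$ on $I$, while $u\in L^\infty([0,T];C^2(D))$ gives $\|\nabla u\|_{L^\infty([0,T]\times D)}+\|D^2u\|_{L^\infty([0,T]\times D)}\le C_u$ (the derivatives of $u$ being bounded up to the boundary). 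I will also use the Gagliardo--Nirenberg inequality on the bounded Lipschitz domain $D$ (valid in every dimension, the relevant interpolation exponent being $\theta=1/2$, not the excluded value $\theta=1$): $\|\nabla v\|_{L^4(D)}^2\le C\|v\|_{L^\infty(D)}\|v\|_{H^2(D)}$ for every $v\in H^2(D)\cap L^\infty(D)$. In particular $\nabla v\in L^4(D)$, hence $\nabla v\otimes\nabla v\in L^2(D)$, which (after mollifying $v$ and using the same inequality to pass to the limit) legitimises the second-order chain rule $D^2(f(v))=f'(v)\,D^2v+f''(v)\,\nabla v\otimes\nabla v$ for $v\in H^2(D)\cap L^\infty(D)$ and $f\in C^2$; I apply this slicewise, to $v=u_k(t,\cdot)$ and $v=u(t,\cdot)$ for a.e.\ $t$.

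For the zeroth and first orders the estimates are routine: $|f(u_k)-f(u)|\le L|u_k-u|$ gives $\|f(u_k)-f(u)\|_{L^2_{t,x}}\le L\|u_k-u\|_{L^2_{t,x}}\to0$, and, splitting $f'(u_k)\nabla u_k-f'(u)\nabla u=f'(u_k)(\nabla u_k-\nabla u)+(f'(u_k)-f'(u))\nabla u$, the $L^2_{t,x}$ norm is at most $C_0\|\nabla u_k-\nabla u\|_{L^2_{t,x}}+L\,C_u\|u_k-u\|_{L^2_{t,x}}\to0$.

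The second order is the substantial step. Writing
\[
D^2(f(u_k))-D^2(f(u))=\big(f'(u_k)D^2u_k-f'(u)D^2u\big)+\big(f''(u_k)\nabla u_k\otimes\nabla u_k-f''(u)\nabla u\otimes\nabla u\big),
\]
the first bracket is handled exactly like the gradient term: split off $f'(u_k)(D^2u_k-D^2u)$ and $(f'(u_k)-f'(u))D^2u$ and bound by $C_0\|D^2u_k-D^2u\|_{L^2_{t,x}}+L\,C_u\|u_k-u\|_{L^2_{t,x}}\to0$. In the second bracket the summand $(f''(u_k)-f''(u))\nabla u\otimes\nabla u$ is $\le L\,C_u^2\|u_k-u\|_{L^2_{t,x}}\to0$, and, using $\nabla u_k\otimes\nabla u_k-\nabla u\otimes\nabla u=(\nabla u_k-\nabla u)\otimes\nabla u_k+\nabla u\otimes(\nabla u_k-\nabla u)$ together with $|f''(u_k)|\le C_0$,
\[
\big|f''(u_k)\big(\nabla u_k\otimes\nabla u_k-\nabla u\otimes\nabla u\big)\big|\le C_0\,|\nabla u_k-\nabla u|\,\big(|\nabla u_k|+|\nabla u|\big);
\]
the $|\nabla u|$ piece contributes $\le C_0C_u\|\nabla u_k-\nabla u\|_{L^2_{t,x}}\to0$, while for the $|\nabla u_k|$ piece Cauchy--Schwarz gives the bound $C_0\|\nabla u_k-\nabla u\|_{L^4_{t,x}}\|\nabla u_k\|_{L^4_{t,x}}$. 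It therefore suffices to prove $\nabla u_k\to\nabla u$ in $L^4([0,T]\times D)$, and this is precisely where the strength of the $L^2_tH^2_x$ convergence enters: applying Gagliardo--Nirenberg slicewise to $w_k(t,\cdot):=u_k(t,\cdot)-u(t,\cdot)$ and integrating in $t$,
\[
\|\nabla w_k\|_{L^4([0,T]\times D)}^4\le C^2\int_0^T\|w_k(t,\cdot)\|_{L^\infty(D)}^2\,\|w_k(t,\cdot)\|_{H^2(D)}^2\,dt\le C^2(2M)^2\,\|w_k\|_{L^2([0,T];H^2(D))}^2\longrightarrow0,
\]
so $\nabla u_k\to\nabla u$ in $L^4_{t,x}$, and in particular $\|\nabla u_k\|_{L^4_{t,x}}$ stays bounded, which closes the last estimate. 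Collecting everything, $f(u_k)$, $\nabla(f(u_k))$ and $D^2(f(u_k))$ converge in $L^2_{t,x}$ to $f(u)$, $\nabla(f(u))$ and $D^2(f(u))$ respectively, that is $f(u_k)\to f(u)$ in $L^2([0,T];H^2(D))$.

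The main obstacle is exactly the strong $L^4([0,T]\times D)$ convergence of the gradients: strong $L^2_tH^2_x$ convergence of a sequence does not in general upgrade to strong $L^4_{t,x}$ convergence of its gradients for time-dependent functions, and the reason it does here is that $\|u_k-u\|_{L^\infty_{t,x}}$ merely stays bounded while $\|u_k-u\|_{L^2_tH^2_x}\to0$, so a Gagliardo--Nirenberg product of the two vanishes. The same inequality also plays the subsidiary but essential role of placing the quadratic term $\nabla u_k\otimes\nabla u_k$ in $L^2$, which is what makes the $H^2$ chain rule applicable for a merely $C^2$ — not globally Lipschitz — nonlinearity $f$; apart from these two points the argument is just a systematic decomposition of each product into ``bounded times vanishing''.
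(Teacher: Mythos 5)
Your proof follows essentially the same route as the paper: the key step in both is the Gagliardo--Nirenberg inequality $\|\nabla v\|_{L^4(D)}^2\le C\|v\|_{L^\infty(D)}\|v\|_{H^2(D)}$ applied to $v=u_k-u$, which converts strong $L^2_tH^2_x$ convergence plus the uniform $L^\infty$ bound into $L^4_{t,x}$ convergence of the gradients, followed by the chain-rule decomposition $D^2(f(u_k))=f'(u_k)D^2u_k+f''(u_k)\nabla u_k\otimes\nabla u_k$ and a term-by-term passage to the limit. One small inaccuracy: since $f$ is only assumed $C^2$, the function $f''$ is uniformly continuous but not necessarily Lipschitz on $[-M,M]$, so the bound $|f''(u_k)-f''(u)|\le L|u_k-u|$ used for the term $(f''(u_k)-f''(u))\nabla u\otimes\nabla u$ is not justified; this is harmless, as the paper's argument (a.e.\ convergence of $f''(u_k)$ along with the uniform $L^\infty$ bound, then dominated convergence) or uniform continuity of $f''$ combined with convergence in measure gives the same conclusion. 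Your explicit justification of the $H^2$ chain rule and the slicewise-in-$t$ bookkeeping are more detailed than the paper's, but do not change the substance.
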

\begin{proof} Using our assumptions and the Gagliardo-Nirenberg inequality (see \cite[section 9]{Brezis}) 
\begin{equation}\label{eq:G-N}
||\nabla v||^4_{L^4(D)}\leq C ||v||^2_{H^2(D)}||v||^2_{L^{\infty}(D)}
\end{equation} 
applied to $v=u_k-u$ we obtain $u_k\to u$ in $L^4([0,T];W^{1,4}(D))$. A simple computation shows 
$$D^2 (f(u_k))=f'(u_k)D^2u_k+f''(u_k)\nabla u_k\otimes \nabla u_k$$
and the $L^2$ convergence of this matrix-valued function to $D^2(f(u))=f'(u)D^2 u+f''(u)\nabla u\otimes \nabla u$ is due to the following facts:
\begin{itemize}
\item $D^2u_k\to D^2 u$ in $L^2([0,T]\times D)$;
\item $\nabla u_k\otimes \nabla u_k\to \nabla u\otimes \nabla u$ in $L^2([0,T]\times D)$;
\item both $f''(u_k)$ and $f'(u_k)$ converge a.e. (to $f''(u)$ and $f'(u)$, respectively) as a consequence of the convergence of $u_k$ to $u$; moreover, these terms are bounded in $L^\infty$ as a consequence of the regularity of $f$ and of the $L^\infty$ bound on $u_k$.\qedhere
\end{itemize}
\end{proof}
We are now ready to prove our main theorem.
\begin{theorem}[\textbf{Main Theorem II}]\label{Main Theorem II}
Suppose $0<T< +\infty,$   $\Omega$ is a bounded and uniformly convex domain  given by (\ref{omega}). Let  $V \in C^{2}(\bar{\Omega}),$  $\varrho_0 \in W^{1,p}({\Omega}) $  for $p>d, $  $\lambda\leq \varrho_0 \leq \Lambda$ for some strictly positive constants $\lambda,\Lambda $ and $\varrho$ be the solution of the Fokker-Planck equation (\ref{F-P}). Then,  ${\varrho^{\tau}_{t}}\to {\varrho}$ strongly in   $L^2([0,T];H^2(\Omega))$ as $\tau \rightarrow 0.$ 
\end{theorem}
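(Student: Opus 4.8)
The plan is to reduce the $H^2$ convergence of $\varrho^\tau$ to the strong convergence of $u^\tau\colonequals\log\varrho^\tau_t+V$ to $u\colonequals\log\varrho_t+V$ in $L^2([0,T];H^2(\Omega))$, and then to recover $\varrho^\tau_t=e^{-V}e^{u^\tau}\to e^{-V}e^{u}=\varrho_t$ in the same space by composing with the smooth function $s\mapsto e^s$ and multiplying by $e^{-V}\in C^2(\bar\Omega)$, which is precisely the content of Lemma \ref{LEMMA}. Recall that in the proof of Lemma \ref{lem:Fisher JKO} we already established that $u^\tau$ is bounded in $L^2([0,T];H^2(\Omega))$ uniformly in $\tau$; since by Corollary \ref{cor:L^2-L^2 conver} together with the uniform lower and upper bounds on $\varrho^\tau$ (point 1 of Proposition \ref{propphik}) one has $\log\varrho^\tau_t\to\log\varrho_t$ strongly in $L^2([0,T]\times\Omega)$, the unique weak $L^2_tH^2_x$ cluster point of $u^\tau$ is $u$. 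Hence it suffices to prove
$$\limsup_{\tau\to0}\int_0^T\int_\Omega|D^2u^\tau|^2\varrho^\tau_t\,dx\,dt\leq\int_0^T\int_\Omega|D^2u|^2\varrho_t\,dx\,dt,$$
because the weak convergence $D^2u^\tau\rightharpoonup D^2u$ in $L^2_{t,x}$, once multiplied by the weight $\sqrt{\varrho^\tau_t}$ (which converges strongly in $L^2_{t,x}$ and is bounded above and below), yields $\sqrt{\varrho^\tau_t}\,D^2u^\tau\rightharpoonup\sqrt{\varrho_t}\,D^2u$ weakly in $L^2_{t,x}$, and then the above $\limsup$ bound together with weak lower semicontinuity of the norm gives convergence of the $L^2_{t,x}$ norms, hence strong convergence of $\sqrt{\varrho^\tau_t}\,D^2u^\tau$; dividing back by $\sqrt{\varrho^\tau_t}\geq\sqrt c>0$ gives $D^2u^\tau\to D^2u$ strongly in $L^2_{t,x}$, which together with the strong $L^2_tH^1_x$ convergence of $u^\tau$ (a consequence of Theorem \ref{Main Theorem I} and the lower bound on $\varrho^\tau$) gives $u^\tau\to u$ in $L^2_tH^2_x$.

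To obtain the $\limsup$ inequality I would combine the discrete dissipation inequality of Lemma \ref{lem:Fisher JKO} with the \emph{exact} continuous identity of Corollary \ref{coroF2rho}, which share the same structure. Writing $A^\tau\colonequals\int_0^T\int_\Omega(\nabla u^\tau)^T D^2V\,(\nabla u^\tau)\,\varrho^\tau_t$ and $B^\tau\colonequals\int_0^T\int_{\partial\Omega}\tfrac{\varrho^\tau_t}{|\nabla h|}(\nabla u^\tau)^T D^2h\,(\nabla u^\tau)\,d\mathcal H^{d-1}$, Lemma \ref{lem:Fisher JKO} reads
$$\int_0^T\int_\Omega|D^2u^\tau|^2\varrho^\tau_t\,dx\,dt\leq F_2(\varrho_0)-F_2(\varrho^\tau_T)-A^\tau-B^\tau-\varepsilon(\tau),$$
whereas Corollary \ref{coroF2rho} gives $\int_0^T\int_\Omega|D^2u|^2\varrho_t=F_2(\varrho_0)-F_2(\varrho_T)-A-B$ with $A,B$ the continuous analogues. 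Thus I need: (i) $\liminf_\tau F_2(\varrho^\tau_T)\geq F_2(\varrho_T)$; (ii) $\lim_\tau A^\tau=A$; (iii) $\liminf_\tau B^\tau\geq B$; (iv) $\varepsilon(\tau)\to0$, which is its definition. For (i), the uniform bound on $F_p(\varrho^\tau_k)$ (point 3 of Proposition \ref{propphik}) together with the uniform lower bound on $\varrho^\tau$ gives a uniform $W^{1,p}$ bound on $\varrho^\tau_T$, so $\varrho^\tau_T\rightharpoonup\varrho_T$ weakly in $W^{1,p}(\Omega)$ (the limit identified by the $W_2$ convergence of Theorem \ref{th:Properties}), and the lower semicontinuity of $\varrho\mapsto\tfrac12\int\tfrac{|\nabla\varrho+\varrho\nabla V|^2}{\varrho}$ under weak $W^{1,1}$ convergence yields (i). For (ii) one writes $\nabla u^\tau=\tfrac{\nabla\varrho^\tau_t}{\varrho^\tau_t}+\nabla V$ and uses the strong $L^2_{t,x}$ convergence $\nabla\varrho^\tau_t\to\nabla\varrho_t$ from Theorem \ref{Main Theorem I}, together with $\varrho^\tau_t\to\varrho_t$ a.e. and $c\leq\varrho^\tau_t\leq C$, to get $\nabla u^\tau\to\nabla u$ strongly in $L^2_{t,x}$; the weight $\varrho^\tau_t D^2V$ converges a.e. and is bounded, so $A^\tau\to A$. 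For (iii), boundedness of $u^\tau$ in $L^2_tH^2_x$ and the Trace Theorem \ref{th:trace theor} make $\mathrm{Tr}(\nabla u^\tau)$ bounded in $L^2_tL^2(\partial\Omega)$ and weakly convergent there to $\mathrm{Tr}(\nabla u)$, while the strong $L^2_tH^1_x$ convergence of $\varrho^\tau_t$ and the trace theorem (plus the lower bound) give $\sqrt{\varrho^\tau_t}|_{\partial\Omega}\to\sqrt{\varrho_t}|_{\partial\Omega}$ strongly in $L^2_tL^2(\partial\Omega)$; hence $\sqrt{\varrho^\tau_t}\,\mathrm{Tr}(\nabla u^\tau)\rightharpoonup\sqrt{\varrho_t}\,\mathrm{Tr}(\nabla u)$ weakly in $L^2_tL^2(\partial\Omega)$, and since $\tfrac{1}{|\nabla h|}D^2h$ is a fixed, bounded, uniformly positive matrix field on $\partial\Omega$, the nonnegative quadratic functional $w\mapsto\int_0^T\int_{\partial\Omega}\tfrac{1}{|\nabla h|}w^TD^2h\,w$ is weakly lower semicontinuous, which gives (iii). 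Combining (i)--(iv) with the two structurally identical formulas above produces the desired $\limsup$ bound.

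I expect the main obstacle to be the passage to the limit in the boundary terms $B^\tau$: one must produce strong convergence of the weight $\varrho^\tau$ on $\partial\Omega$ and weak convergence of the boundary trace of $\nabla u^\tau$ using \emph{only} the $L^2_tH^2_x$ bound (no compactness in time is available for these traces), and one must be careful with the sign so that only the lower-semicontinuity direction is actually needed there; by contrast the bulk term $A^\tau$, involving the possibly indefinite Hessian $D^2V$, genuinely requires two-sided convergence. A secondary technical point is verifying the hypotheses of Lemma \ref{LEMMA} for $u=\log\varrho+V$, i.e.\ the $C^2$-in-space regularity of the limit solution for $t>0$: this follows from Proposition \ref{th:reg.F-P} when $V$ and $\partial\Omega$ are slightly more regular, and otherwise can be bypassed since the chain-rule computation $D^2(e^{u})=e^{u}(D^2u+\nabla u\otimes\nabla u)$ used in the proof of Lemma \ref{LEMMA} is already valid for $u\in L^2_tH^2_x\cap L^\infty_{t,x}$, which is all that the argument really needs. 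Finally, all the a.e.\ convergences above are taken along subsequences, but since every limit is uniquely identified as the Fokker--Planck solution, the convergences hold for the whole family $\tau\to0$.
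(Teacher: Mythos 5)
Your proposal is correct and follows essentially the same route as the paper: bound $\log\varrho^\tau+V$ in $L^2_tH^2_x$ via Lemma \ref{lem:Fisher JKO}, match the discrete dissipation inequality against the exact continuous identity of Corollary \ref{coroF2rho} to get the $\limsup$ bound on $\int\!\!\int|D^2(\log\varrho^\tau+V)|^2\varrho^\tau$, upgrade weak to strong convergence of $\sqrt{\varrho^\tau}\,D^2(\log\varrho^\tau+V)$ by convergence of norms, and conclude with Lemma \ref{LEMMA}. Your explicit treatment of the three limit passages (lower semicontinuity of $F_2$ at $t=T$, two-sided convergence of the $D^2V$ term, one-sided lower semicontinuity of the boundary term) is in fact slightly more careful than the paper's own write-up of the same steps.
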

\begin{proof}
In  Lemma \ref{lem:Fisher JKO} and its proof,  we get the estimate (\ref{eq:Fisher for rho_k})  and showed that $\log{\varrho_t^{\tau}}+V$ is uniformly bounded in $L^2([0,T]; H^2(\Omega))$ with respect to $\tau.$  Since the space $L^2([0,T]; H^2(\Omega))$ is reflexive and $\varrho_t^{\tau}$ converges strongly in $L^2([0,T]; H^1(\Omega))$ to $\varrho$ (see Theorem \ref{Main Theorem I}), we get $\log{\varrho_t^{\tau}}+V$ converges weakly to $\log{\varrho}+V$ in $ L^2([0,T]; H^2(\Omega)).$ This also implies that $\nabla\log{\varrho_t^{\tau}}+\nabla V$ converges weakly to $\nabla\log{\varrho}+\nabla V$ in $ L^2([0,T]; L^2(\partial\Omega)).$ If $\tau$ tends to zero in (\ref{eq:Fisher for rho_k}), then by the lower semicontinuity of weak convergence with respect to its corresponding norm 
$$
F_2(\varrho_0)-F_2(\varrho_T)\geq F_2(\varrho_0)-\liminf_{\tau \to 0} F_2(\varrho_T^{\tau}))\geq  \limsup_{\tau \to 0}
 \int_0^T \int_{\Omega} |D^2(\log{\varrho_t^{\tau}(x)}+V(x))|^2 \varrho_t^{\tau}(x)\,dxdt+
 $$ 
$$
  \limsup_{\tau \to 0} \int_{0}^T\int_{\Omega} (\nabla \log{\varrho_t^{\tau}(x)}+\nabla V(x))^T \cdot D^2V(x)\cdot(\nabla \log{\varrho_t^{\tau}(x)}+\nabla V(x)) \varrho_t^{\tau}(x)\,dxdt
 +
 $$
  $$
  \limsup_{\tau \to 0} \int_0^T\int_{\partial \Omega}\frac{\varrho_t^{\tau}(x)}{|\nabla h(x)|}(\nabla \log{\varrho_t^{\tau}(x)}+\nabla V(x))^T\cdot D^2 h(x)\cdot (\nabla \log{\varrho_{t}^{\tau}(x)}+\nabla V(x))\,d\mathcal{H}^{d-1}dt \geq $$
  $$ \limsup_{\tau \to 0}
 \int_0^T \int_{\Omega} |D^2(\log{\varrho_t^{\tau}(x)}+V(x))|^2 \varrho_t^{\tau}(x)\,dxdt+
 $$ 
$$
 \int_{0}^T\int_{\Omega} (\nabla \log{\varrho_t(x)}+\nabla V(x))^T \cdot D^2V(x)\cdot(\nabla \log{\varrho_t(x)}+\nabla V(x)) \varrho_t(x)\,dxdt
 +
 $$ 
 $$
  \int_0^T\int_{\partial \Omega}\frac{\varrho_t(x)}{|\nabla h(x)|}(\nabla \log{\varrho_t(x)}+\nabla V(x))^T\cdot D^2 h(x)\cdot (\nabla \log{\varrho_{t}(x)}+\nabla V(x))\,d\mathcal{H}^{d-1}dt
$$
The estimate above  and Corollary \ref{coroF2rho} imply
$$ 
 \int_0^T \int_{\Omega} |D^2(\log{\varrho_t(x)}+V(x))|^2 \varrho_t(x)\,dxdt\geq \limsup_{\tau \to 0}
 \int_0^T \int_{\Omega} |D^2(\log{\varrho_t^{\tau}(x)}+V(x))|^2 \varrho_t^{\tau}(x)\,dxdt.
 $$
 Because of weak convergence of $\log{\varrho^{\tau}}+V$ to $\log{\varrho}+V$ in $L^2([0,T];H^2(\Omega))$ we have  $$\liminf_{\tau \to 0}
 \int_0^T \int_{\Omega} |D^2(\log{\varrho_t^{\tau}(x)}+V(x))|^2 \varrho_t^{\tau}(x)\,dxdt \geq \int_0^T \int_{\Omega} |D^2(\log{\varrho_t(x)}+V(x))|^2 \varrho_t(x)\,dxdt.$$
 Therefore, we have
 \begin{equation}\label{eq:lim}\lim_{\tau \to 0}\int_0^T \int_{\Omega} |D^2(\log{\varrho_t^{\tau}(x)}+V(x))|^2 \varrho_t^{\tau}(x)\,dxdt = \int_0^T \int_{\Omega} |D^2(\log{\varrho_t(x)}+V(x))|^2 \varrho_t(x)\,dxdt.
 \end{equation}
 The limit above and the upper and lower boundson $ \varrho^\tau$ show that  $D^2\log{\varrho^{\tau}}$ is bounded in $L^2([0,T];L^2(\Omega))$.  Since  $L^2([0,T];L^2(\Omega))$ is reflexive and $\varrho^\tau$ converges strongly in $L^2([0,T]; H^1(\Omega))$ to $\varrho,$ then $D^2\log{\varrho^{\tau}}$ converges weakly to $D^2\log{\varrho}$ in $L^2([0,T];L^2(\Omega)).$ By adding $D^2V$ and multiplying times $\sqrt{\varrho^\tau}$, which converges a.e. to $\sqrt{\varrho}$ and is bounded by a constant, we also have weak convergence in  $L^2([0,T];L^2(\Omega))$ of $\sqrt{\varrho^\tau}D^2(\log\varrho^\tau+V)$ to $\sqrt{\varrho}D^2(\log\varrho+V)$. Yet, this convergence becomes strong because of the convergence of the norm in \eqref{eq:lim}. We can then multiply times $(\varrho^\tau)^{-1/2}$ and subtract $D^2V$ and obtain strong convergence in $L^2([0,T];H^2(\Omega))$ for $\log\varrho^\tau$ to $\log\varrho$.  

We then apply  Lemma \ref{LEMMA} to obtain ${\varrho^{\tau}}\to {\varrho} $ in $L^2([0,T];H^2(\Omega)).$  \end{proof}



\end{document}